\documentclass{amsart}
\usepackage{amsmath}
\usepackage{amssymb}
\usepackage[all]{xy}

\setcounter{tocdepth}{1}

\let\oldeqn\equation
\let\endoldeqn\endequation
\renewenvironment{equation}{\oldeqn\setlength{\thickmuskip}{10mu plus 5mu}}{\endoldeqn\relax}
\catcode`\*11
\let\oldeqn*\equation*
\let\endoldeqn*\endequation*
\renewenvironment{equation*}{\oldeqn*\setlength{\thickmuskip}{10mu plus 5mu}}{\endoldeqn*\relax}
\catcode`\*12

\newcommand{\C}{\mathbb{C}}
\newcommand{\bM}{\mathbf{M}}
\newcommand{\fM}{\mathfrak{M}}
\newcommand{\fL}{\mathfrak{L}}
\newcommand{\Z}{\mathbb{Z}}
\newcommand{\N}{\mathbb{N}}


\newcommand{\fsl}{\mathfrak{sl}}
\newcommand{\fso}{\mathfrak{so}}
\newcommand{\fsp}{\mathfrak{sp}}

\newcommand{\simto}{\mathrel{\overset{\sim}{\to}}}
\DeclareMathOperator{\Hom}{Hom}
\DeclareMathOperator{\End}{End}

\DeclareMathOperator{\GL}{GL}

\DeclareMathOperator{\img}{im}

\DeclareMathOperator{\Proj}{Proj}
\DeclareMathOperator{\Spec}{Spec}
\DeclareMathOperator{\Res}{Res}
\DeclareMathOperator{\Ind}{Ind}
\newcommand{\A}{\mathrm{A}}
\newcommand{\D}{\mathrm{D}}

\newcommand{\bv}{\mathbf{v}}
\newcommand{\bw}{\mathbf{w}}
\newcommand{\bn}{\mathbf{n}}
\newcommand{\bt}{\mathbf{t}}
\newcommand{\tI}{\widetilde{I}}

\newcommand{\tH}{\widetilde{H}}
\renewcommand{\th}{\widetilde{h}}
\newcommand{\ta}{\widetilde{a}}
\newcommand{\tA}{\widetilde{A}}
\newcommand{\tB}{\widetilde{B}}
\newcommand{\tV}{\widetilde{V}}
\newcommand{\tbv}{\widetilde{\bv}}

\newcommand{\tbw}{\widetilde{\bw}}
\newcommand{\tW}{\widetilde{W}}
\newcommand{\tOmega}{\widetilde{\Omega}}
\newcommand{\tGamma}{\widetilde{\Gamma}}
\newcommand{\tDelta}{\widetilde{\Delta}}
\newcommand{\hI}{\widehat{I}}
\newcommand{\hH}{\widehat{H}}
\newcommand{\hOmega}{\widehat{\Omega}}
\newcommand{\hh}{\widehat{h}}
\newcommand{\cB}{\mathcal{B}}

\newcommand{\cO}{{\mathcal{O}}}

\newcommand{\cV}{\mathcal{V}}
\newcommand{\cW}{\mathcal{W}}
\newcommand{\cD}{\mathcal{D}}
\newcommand{\cS}{\mathcal{S}}
\newcommand{\cF}{\mathcal{F}}
\newcommand{\tcS}{\widetilde{\mathcal{S}}}
\newcommand{\talpha}{\mathbb{T}}
\newcommand{\sbeta}{\mathbb{S}}

\numberwithin{equation}{section}
\newtheorem{thm}{Theorem}[section]
\newtheorem{lem}[thm]{Lemma}
\newtheorem{prop}[thm]{Proposition}
\newtheorem{cor}[thm]{Corollary}
\theoremstyle{definition}

\theoremstyle{remark}
\newtheorem{rmk}[thm]{Remark}
\newtheorem{ex}[thm]{Example}

\title[Diagram automorphisms of quiver varieties]{Diagram automorphisms of quiver varieties}

\author{Anthony Henderson}
\address{School of Mathematics and Statistics\\
  University of Sydney, NSW 2006\\
  Australia}
\email{anthony.henderson@sydney.edu.au}

\author{Anthony Licata}
\address{Mathematical Sciences Institute\\
   Australian National University, ACT 0200\\
   Australia}
\email{amlicata@gmail.com}

\subjclass{Primary 17B08; Secondary 20G05}

\thanks{A.H. was supported by ARC Future Fellowship No.~FT110100504. A.L. was supported by ARC Discovery Early Career Researcher Award No.~DE120102369.}

\begin{document}

\begin{abstract}
We show that the fixed-point subvariety of a Nakajima quiver variety under a diagram automorphism is a disconnected union of quiver varieties for the `split-quotient quiver' introduced by Reiten and Riedtmann. As a special case, quiver varieties of type D arise as the connected components of fixed-point subvarieties of diagram involutions of quiver varieties of type A. In the case where the quiver varieties of type A correspond to small self-dual representations, we show that the diagram involutions coincide with classical involutions of two-row Slodowy varieties. It follows that certain quiver varieties of type D are isomorphic to Slodowy varieties for orthogonal or symplectic Lie algebras.
\end{abstract}

\maketitle


\section{Introduction}
\label{sect:intro}


\subsection{Background and summary}

In \cite{nak1,nak2}, Nakajima introduced his quiver varieties $\fM(\bv,\bw)$ and used them to give a geometric construction of the integrable highest-weight representations of Kac--Moody algebras with symmetric Cartan matrices. They have since found a wide range of other applications in geometric representation theory. In particular, following the philosophy of Lusztig in~\cite{lusztig}, one can use diagram automorphisms of quiver varieties to handle the case of symmetrizable (rather than symmetric) Cartan matrices; see the articles of Xu~\cite{xu} and Savage~\cite{savage}. In this paper we address the following questions about diagram automorphisms of quiver varieties:
\begin{enumerate}
\item What is the fixed-point subvariety $\fM(\bv,\bw)^\theta$ of a quiver variety $\fM(\bv,\bw)$ under a diagram automorphism $\theta$? (Note that this question does not arise in~\cite{xu} or~\cite{savage}, which consider instead the $\theta$-stable irreducible components of the projective subvariety $\fL(\bv,\bw)$.)
\item As conjectured by Nakajima~\cite{nak1} and proved by Maffei~\cite{maffei}, each quiver variety of type A is isomorphic to a Slodowy variety of type A, i.e.\ the resolution of a Slodowy slice in some nilpotent orbit closure for $\fsl_D$. What involutions of Slodowy varieties correspond to the diagram involutions of quiver varieties of type A?
\item What information is obtained by combining the answers to (1) and (2)?
\end{enumerate} 
We answer question (1) in complete generality (indeed, with a slightly more general definition of diagram automorphism than has hitherto appeared), and questions (2) and (3) in special cases. When $\fM(\bv,\bw)$ is of type A, the answer to question (1) is that $\fM(\bv,\bw)^\theta$ is a disconnected union of quiver varieties of type D. As a result, what comes out of (3) are some analogues of Maffei's isomorphisms, this time between `small' type-D quiver varieties and `two-row' Slodowy varieties of types C and D.

\subsection{Results and outline}

The answer to question (1) involves the definition, first given by Reiten and Riedtmann \cite{RR} and recalled in Section~\ref{sect:split-quotient}, of the \emph{split-quotient quiver} of a quiver with an admissible automorphism. In the case of a quiver whose underlying graph is a simply-laced finite or affine Dynkin diagram, the split-quotient quiver is obtained by folding the original quiver, dualizing, and then unfolding; see Example~\ref{ex:dynkin}. This case of the definition is implicit in Slodowy's enhancement of the McKay correspondence~\cite{slod}, in which automorphisms of affine Dynkin diagrams arise from pairs of subgroups of $SL_2(\C)$; we explain this connection further in \S\ref{subsect:McKay}. 

In Section~\ref{sect:diag-aut} we answer question (1): the fixed-point subvariety of a quiver variety under a diagram automorphism is a disconnected union of quiver varieties for the split-quotient quiver. The general statement, applying to admissible automorphisms of arbitrary quivers, is Theorem~\ref{thm:fixed-points}. We state here for illustration only the special case that is needed for the latter parts of the paper. 

Consider a quiver variety $\fM^{\A_{2n-1}}(\bv,\bw)$ of type A$_{2n-1}$, for a positive integer $n$, where the data $\bv=(v_1,\cdots,v_{2n-1})$, $\bw=(w_1,\cdots,w_{2n-1})$ are symmetric under the Dynkin diagram involution, in the sense that $v_i=v_{2n-i}$ and $w_i=w_{2n-i}$ for all $i$. Note that the condition $w_i=w_{2n-i}$ means that the quiver variety corresponds, in Nakajima's theory, to a weight space of a self-dual representation of $\fsl_{2n}$. The definition of $\fM^{\A_{2n-1}}(\bv,\bw)$ is recalled in \S\ref{subsect:quiver-vars}; see especially Example~\ref{ex:typea}.

We can define a diagram involution $\theta$ of $\fM^{\A_{2n-1}}(\bv,\bw)$ in a natural way, incorporating an involution of the `middle' vector space $W_n$; see \S\ref{subsect:diag-aut} and especially Example~\ref{ex:typea-inv}. Suppose that the involution of $W_n$ has signature $(w_+,w_-)$ where $w_++w_-=w_n$. Let $\fM^{\A_{2n-1}}(\bv,\bw)^\theta$ denote the fixed-point subvariety of $\theta$.
\begin{thm} \label{thm:fixed-points-AtoD}
There is an isomorphism of varieties
\[
\fM^{\A_{2n-1}}(\bv,\bw)^\theta
\cong
\!\!\!\coprod_{\substack{(v_+,v_-)\\v_++v_-=v_n}}\!\!\!
\fM^{\D_{n+1}}((v_1,\cdots,v_{n-1},v_+,v_-),(w_1,\cdots,w_{n-1},w_+,w_-)).
\]
\end{thm}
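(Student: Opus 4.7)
The plan is to obtain Theorem~\ref{thm:fixed-points-AtoD} as a specialization of the general fixed-point result Theorem~\ref{thm:fixed-points}. First I would verify the hypotheses of that theorem: the $A_{2n-1}$ Dynkin diagram has the admissible involution $i \mapsto 2n-i$, fixing only the central vertex $n$; the symmetry assumptions $v_i = v_{2n-i}$ and $w_i = w_{2n-i}$ allow one to lift this combinatorial involution to a diagram automorphism $\theta$ of $\fM^{\A_{2n-1}}(\bv,\bw)$ along the lines of Example~\ref{ex:typea-inv}, with the additional datum of an involution on $W_n$ of signature $(w_+, w_-)$. Theorem~\ref{thm:fixed-points} then asserts that $\fM^{\A_{2n-1}}(\bv,\bw)^\theta$ is a disjoint union of quiver varieties attached to the split-quotient quiver of $A_{2n-1}$ under $\theta$.

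The central combinatorial computation is that this split-quotient quiver is exactly the $D_{n+1}$ Dynkin quiver. Following the Reiten--Riedtmann construction recalled in Section~\ref{sect:split-quotient}, and the worked cases in Example~\ref{ex:dynkin}, one folds $A_{2n-1}$ along the involution (identifying each pair $\{i, 2n-i\}$), records the induced $\Z/2$-action at the fixed vertex, then dualizes and unfolds. The result has vertices $1, \ldots, n-1$ arranged linearly, together with two further vertices each joined to vertex $n-1$: this is exactly the $D_{n+1}$ diagram. The two terminal ``split'' vertices correspond to the $+1$ and $-1$ eigenspaces of the induced action at the fixed vertex.

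Next I would match the dimension data produced by Theorem~\ref{thm:fixed-points}. The vertices $1, \ldots, n-1$, coming from non-fixed orbits, inherit dimensions $v_i$ and framing dimensions $w_i$ directly from the original data. At the fixed vertex, the $\theta$-eigenspace decomposition $V_n = V_+ \oplus V_-$ supplies a pair of dimensions $(v_+, v_-)$ with $v_+ + v_- = v_n$, while the prescribed signature $(w_+, w_-)$ of the involution on $W_n$ fixes the framing dimensions at the two split vertices. Theorem~\ref{thm:fixed-points} then yields one component $\fM^{\D_{n+1}}((v_1, \ldots, v_{n-1}, v_+, v_-), (w_1, \ldots, w_{n-1}, w_+, w_-))$ for each admissible decomposition $v_+ + v_- = v_n$, and summing over these decompositions gives the claimed disjoint union.

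The main obstacle is the combinatorial identification of the split-quotient quiver as $D_{n+1}$ together with the correct matching of dimension vectors: one must check that the Reiten--Riedtmann recipe --- particularly the dualization step at the fixed vertex --- produces two simple edges rather than a double edge or some other configuration, and that the framing dimensions at the two split vertices come out exactly as $(w_+, w_-)$ and not some rearrangement. Once this is verified, the isomorphism is a direct consequence of Theorem~\ref{thm:fixed-points}, with no further moduli-theoretic input required.
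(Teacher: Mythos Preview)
Your proposal is correct and follows exactly the route the paper takes: Theorem~\ref{thm:fixed-points-AtoD} is obtained as the specialization of the general Theorem~\ref{thm:fixed-points} to the $\A_{2n-1}$ quiver with its diagram involution, the identification of the split-quotient quiver as $\D_{n+1}$ being the computation of Example~\ref{ex:typea-typed} (and subsumed in Example~\ref{ex:dynkin}). The paper makes this explicit in Example~\ref{ex:typea-typed-rule}, and the matching of dimension data $(v_+,v_-)$ and $(w_+,w_-)$ with the eigenspace decompositions at the fixed vertex $n$ is exactly as you describe.
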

\noindent
Here, on the right-hand side we have the disconnected union of the quiver varieties of type D$_{n+1}$ associated to the indicated data, where we label the nodes of the Dynkin diagram as $1,\cdots,n-1$ (branch node), $+$ and $-$ (end nodes adjacent to the branch).  Note that, in general, many of these quiver varieties of type D$_{n+1}$ will be empty. Theorem~\ref{thm:fixed-points-AtoD} says that the nonempty ones constitute the connected components of the fixed-point subvariety $\fM^{\A_{2n-1}}(\bv,\bw)^\theta$. In Proposition~\ref{prop:lagrangian} we show that the isomorphism of Theorem~\ref{thm:fixed-points-AtoD} respects the projective subvarieties $\fL^{\A_{2n-1}}(\bv,\bw)^\theta$ and $\fL^{\D_{n+1}}((v_1,\cdots,v_{n-1},v_+,v_-),(w_1,\cdots,w_{n-1},w_+,w_-))$.

Quiver varieties of affine type are particularly interesting, as they can be interpreted as moduli spaces of torsion-free sheaves on surfaces, in the context of the McKay correspondence~\cite{nak1,vv}. In \S\ref{subsect:nakajima} we outline a different proof of Theorem~\ref{thm:fixed-points}, suggested to us by Nakajima, which makes use of this interpretation and thus applies only to certain automorphisms of simply-laced finite or affine Dynkin diagrams. In this approach, the ultimate `reason' for Theorem~\ref{thm:fixed-points-AtoD} is simply that the cyclic group corresponding to $\widetilde{\A}_{2n-1}$ is an index-$2$ subgroup of the binary dihedral group corresponding to $\widetilde{\D}_{n+2}$.  

In Section~\ref{sect:maffei}, we assume that $\bw=(0,\cdots,0,1,0\cdots,0,1,0,\cdots,0)$, where the ones are in positions $k$ and $2n-k$ for some $1\leq k\leq n$, the $k=n$ case giving $\bw=(0,\cdots,0,2,0,\cdots,0)$. Thus, the associated representation of $\fsl_{2n}$, namely the one with highest weight $\varpi_k+\varpi_{2n-k}$, is not only self-dual but also \emph{small} in the sense of \cite{broer}. In this case, Maffei's result~\cite[Theorem 8]{maffei} says that $\fM^{\A_{2n-1}}(\bv,\bw)$ is isomorphic to a certain resolution of a closed subvariety of the Slodowy slice to the orbit with Jordan type $(2n-k,k)$ in the nilpotent cone of $\fsl_{2n}$ (a \emph{two-row} Slodowy slice). See Proposition~\ref{prop:maffei1} for the precise statement. 

This isomorphism descends to one between the affine variety $\fM_1^{\A_{2n-1}}(\bv,\bw)$ and the appropriate subvariety of the nilpotent cone of $\fsl_{2n}$. On the level of these affine varieties, we show in Theorem~\ref{thm:involutions} that the involution $\theta$ of $\fM_1^{\A_{2n-1}}(\bv,\bw)$ corresponds to a `classical' involution of $\fsl_{2n}$, namely one defined by taking negative transpose with respect to a suitable nondegenerate bilinear form on $\C^{2n}$. When $k<n$, the form is skew-symmetric if $k$ is even and symmetric if $k$ is odd; when $k=n$, the type of the form depends on the parity of $n$ and on $(w_+,w_-)$. The proof of this result is made difficult by the fact that Maffei's isomorphism is given by a recursive characterization rather than by an explicit formula; we cannot use the more explicit version found by Mirkovi\'c--Vybornov~\cite{mvy-cr,mvy}, since their slice is not adapted to the bilinear form. We then make some further assumptions on $\bv$ in order to deduce a similar result for $\fM^{\A_{2n-1}}(\bv,\bw)$ itself, Corollary~\ref{cor:involutions}, our answer to question (2).

In Section~\ref{sect:conseq} we derive some consequences of these results. Notably, Theorem~\ref{thm:upstairs} is a description of certain fixed-point subvarieties $\fM^{\A_{2n-1}}(\bv,\bw)^\theta$ in terms of Slodowy varieties for $\fsp_{2n}$ or $\fso_{2n}$. Combining this with Theorem~\ref{thm:fixed-points-AtoD} as suggested by question (3), we obtain the following result. Let $\cS^{\mathrm{C}_n}_{(2n-k,k)}$ and $\cS^{\mathrm{D}_n}_{(2n-k,k)}$ denote the Slodowy slices to the orbits with Jordan type $(2n-k,k)$ in the nilpotent cones of $\fsp_{2n}$ and $\fso_{2n}$, respectively, the former being defined when $k$ is even or $k=n$, and the latter being defined when $k$ is odd or $k=n$. Let $\tcS^{\mathrm{C}_n}_{(2n-k,k)}$ and $\tcS^{\mathrm{D}_n}_{(2n-k,k)}$ denote the Springer resolutions of these slices.

\begin{thm} \label{thm:isomorphisms-intro}
We have isomorphisms of varieties
\[
\begin{split}
\tcS^{\mathrm{C}_n}_{(2n-k,k)}&\cong \begin{cases}
&\fM^{\D_{n+1}}((1,2,\cdots,k,k,\cdots,k,\frac{k}{2},\frac{k}{2}),(0,\cdots,0,1,0,\cdots,0))\\
&\quad\text{ if }k<n\textup{ (}k\text{ even}\textup{)},\text{ where the $1$ on the right is in position $k$,}\\
&\fM^{\D_{n+1}}((1,2,\cdots,n-1,\frac{n}{2},\frac{n}{2}),(0,\cdots,0,1,1))\\
&\quad\text{ if }k=n\text{ is even,}\\
&\fM^{\D_{n+1}}((1,2,\cdots,n-1,\frac{n-1}{2},\frac{n+1}{2}),(0,\cdots,0,2))\\
&\quad\text{ if }k=n\text{ is odd,}
\end{cases}
\\
\tcS^{\D_n}_{(2n-k,k)}&\cong \begin{cases}
&\fM^{\D_{n+1}}((1,2,\cdots,k,k,\cdots,k,\frac{k-1}{2},\frac{k+1}{2}),(0,\cdots,0,1,0,\cdots,0))\\
&\quad\text{ if }k<n\textup{ (}k\text{ odd}\textup{)},\text{ where the $1$ on the right is in position $k$,}\\
&\fM^{\D_{n+1}}((1,2,\cdots,n-1,\frac{n-1}{2},\frac{n+1}{2}),(0,\cdots,0,1,1))\\
&\quad\text{ if }k=n\text{ is odd,}\\
&\fM^{\D_{n+1}}((1,2,\cdots,n-1,\frac{n}{2},\frac{n}{2}),(0,\cdots,0,2))\\
&\quad\text{ if }k=n\text{ is even.}
\end{cases}
\end{split}
\]
In each case we also have two further isomorphisms: between the Slodowy slice $\cS^{\mathrm{C}_n}_{(2n-k,k)}$ or $\cS^{\D_n}_{(2n-k,k)}$ and the affine variety $\fM_1^{\D_{n+1}}(\cdot,\cdot)$; and between the Springer fibre at the base-point of the slice, i.e.\ $\cB^{\mathrm{C}_n}_{(2n-k,k)}$ or $\cB^{\D_n}_{(2n-k,k)}$ as appropriate, and the projective variety $\fL^{\D_{n+1}}(\cdot,\cdot)$. Moreover, the action on $\tcS^{\mathrm{C}_n}_{(2n-k,k)}$ or $\tcS^{\D_n}_{(2n-k,k)}$ of the stabilizer of the $\fsl_2$-triple corresponds in each case to a natural reductive group action on the right-hand side. See \S\ref{subsect:the-end} for the details of these statements.
\end{thm}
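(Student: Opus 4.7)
The plan is to combine Theorem~\ref{thm:upstairs} with Theorem~\ref{thm:fixed-points-AtoD}, as signalled by question~(3) of the introduction. Theorem~\ref{thm:upstairs} provides, for the data specified in the statement (including the correct involution signature $(w_+,w_-)$ in the $k=n$ case), an isomorphism between the fixed-point subvariety $\fM^{\A_{2n-1}}(\bv,\bw)^\theta$ and the Springer resolution $\tcS^{\mathrm{C}_n}_{(2n-k,k)}$ or $\tcS^{\D_n}_{(2n-k,k)}$ as appropriate. Theorem~\ref{thm:fixed-points-AtoD} independently decomposes the same fixed-point subvariety as a disjoint union of quiver varieties of type $\D_{n+1}$, indexed by pairs $(v_+,v_-)$ with $v_++v_-=v_n$. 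The present theorem emerges by singling out the unique nonempty summand on the right-hand side and identifying its parameters.

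The first step is to show that each Springer resolution $\tcS^{\mathrm{C}_n}_{(2n-k,k)}$ and $\tcS^{\D_n}_{(2n-k,k)}$ is connected: it is smooth and surjects onto the corresponding Slodowy slice, which is connected (being the intersection of a single nilpotent orbit closure with an affine translate of the centralizer of an $\fsl_2$-triple). Hence only one summand in the decomposition of Theorem~\ref{thm:fixed-points-AtoD} is nonempty. To pin down the particular $(v_+,v_-)$, I would match dimensions on both sides using the standard quiver-variety formula $\dim\fM^{\D_{n+1}}(\bv',\bw')=2\bv'\cdot\bw'-\bv'\cdot C\bv'$ (with $C$ the Cartan matrix of $\D_{n+1}$) against the known dimension of the two-row Slodowy variety. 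As $(v_+,v_-)$ varies with $v_++v_-=v_n$ fixed, only the quadratic term $2(v_+^2+v_-^2)-2v_{n-1}(v_++v_-)$ changes; a direct verification shows that the $(v_+,v_-)$ listed in the statement is the unique pair giving the required dimension, so all other summands must be empty.

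The two additional isomorphisms follow from the same combination at the refined levels. For $\cS^{\mathrm{C}_n}_{(2n-k,k)}\cong\fM_1^{\D_{n+1}}(\cdot,\cdot)$ and its $\mathrm{D}_n$ analogue, I would invoke Theorem~\ref{thm:involutions}, which already identifies $\fM_1^{\A_{2n-1}}(\bv,\bw)$ with the relevant two-row subvariety of the nilpotent cone of $\fsl_{2n}$; taking $\theta$-fixed points and descending through the split-quotient construction of Theorem~\ref{thm:fixed-points-AtoD} yields the type-$\D$ statement. For $\cB^{\mathrm{C}_n}_{(2n-k,k)}\cong\fL^{\D_{n+1}}(\cdot,\cdot)$, Proposition~\ref{prop:lagrangian} already transfers the identification to the level of $\fL$. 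The equivariance under the stabilizer of the $\fsl_2$-triple follows by tracking the natural actions through Maffei's isomorphism and the split-quotient construction: the stabilizer acts on $\tcS$ by conjugation, which corresponds via Corollary~\ref{cor:involutions} to a $\theta$-equivariant action on the type-$A$ quiver variety, and this descends to the standard reductive group action on the type-$\D$ side.

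The main obstacle is the case $k=n$, where the middle node carries a nontrivial involution signature $(w_+,w_-)\in\{(2,0),(0,2),(1,1)\}$ and the type ($\mathrm{C}$ versus $\D$) of the resulting bilinear form on $\C^{2n}$ depends on the parity of $n$ combined with this signature. Carefully enumerating these subcases and verifying in each that the stated $(v_+,v_-)\in\{(\lfloor n/2\rfloor,\lceil n/2\rceil),(\lceil n/2\rceil,\lfloor n/2\rfloor)\}$ is correctly matched to $\mathrm{C}_n$ or $\D_n$ is the most delicate part of the bookkeeping.
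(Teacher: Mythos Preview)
Your overall strategy---combine Theorem~\ref{thm:upstairs} with Theorem~\ref{thm:fixed-points-AtoD}---is exactly the paper's approach, but there is a genuine error and two further gaps in the execution.

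The error is the claim that $\fM^{\A_{2n-1}}(\bv,\bw)^\theta$ is connected in all cases, so that only one summand in Theorem~\ref{thm:fixed-points-AtoD} is nonempty. This is false precisely in the cases governed by~\eqref{eqn:symm}: Theorem~\ref{thm:upstairs} says that when the form is symmetric one has $\fM^{\A_{2n-1}}(V,W)^\theta\cong\tcS^{\D_n}_{(2n-k,k)}\coprod\tcS^{\D_n}_{(2n-k,k)}$, two copies. Correspondingly there are \emph{two} nonempty $\fM^{\D_{n+1}}(\cdot,\cdot)$ summands (related by swapping $v_+$ and $v_-$, or in the even case by~\eqref{eqn:extra}), and the paper simply picks one. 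Your dimension-matching argument cannot distinguish them, since both have the same dimension; and even in the type~C case, matching one dimension does not by itself force the other summands to be empty. The paper instead appeals to the type~D analogue of \cite[Lemma~7]{maffei}, i.e.\ a direct nonemptiness criterion for $\fM^{\D_{n+1}}$, and uses the component count from Theorem~\ref{thm:upstairs} to know in advance how many nonempty summands to look for.

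The second gap is in the affine isomorphism $\cS^{\mathrm{C}_n/\D_n}_{(2n-k,k)}\cong\fM_1^{\D_{n+1}}(\cdot,\cdot)$. Theorem~\ref{thm:fixed-points-AtoD} concerns $\fM$, not $\fM_1$; at the $\fM_1$ level one only has the morphisms $\Psi_{\tbv,1}$ of Proposition~\ref{prop:referee}, which are not known a priori to be isomorphisms. The paper sidesteps this by observing that both $\pi:\fM^{\D_{n+1}}\to\fM_1^{\D_{n+1}}$ and $\mu:\tcS\to\cS$ are affinization maps (the latter because the Slodowy slice is normal), so the isomorphism at the $\fM$/$\tcS$ level descends uniquely; this then forces $\Psi_{\tbv,1}$ to be an isomorphism in these particular cases. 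Your sketch ``taking $\theta$-fixed points and descending through the split-quotient construction'' does not supply this step. The remaining parts of your outline (use of Proposition~\ref{prop:lagrangian} for $\fL$, and tracking the group action) are essentially correct and match the paper.
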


\subsection{Relations to other work}

Theorem~\ref{thm:fixed-points-AtoD}, or more generally Theorem~\ref{thm:fixed-points}, is a natural next step from the appearances of the split-quotient quiver in the representation theory of path algebras \cite{RR} and preprojective algebras \cite{demonet} in the presence of a quiver automorphism. Nakajima quiver varieties can be interpreted as moduli spaces of certain representations of preprojective algebras~\cite[Section 1]{c-b}, so Theorem~\ref{thm:fixed-points} can be regarded as a moduli-space version of the equivalence of categories proved by Demonet~\cite[Corollary 3]{demonet}.

One non-trivial case of Theorem~\ref{thm:isomorphisms-intro} was already known, namely that $\tcS^{\mathrm{C}_n}_{(2n-2,2)}\cong\fM^{\D_{n+1}}((1,2,\cdots,2,1,1),(0,1,0,\cdots,0))$ for $n\geq 3$, because each of these varieties is isomorphic to the minimal resolution of a Kleinian singularity of type $\D_{n+1}$, by the results of Slodowy~\cite[Section 6.4, Theorem]{slod} and Kronheimer~\cite[Corollary 3.12]{kron}, respectively (see Cassens--Slodowy~\cite[Section 7, Theorem]{cass-slod} for an algebraic version of Kronheimer's result). It is noteworthy that our methods recover this statement mixing types C$_n$ and D$_{n+1}$, but not the statement $\tcS^{\D_{n+1}}_{(2n-1,3)}\cong\fM^{\D_{n+1}}((1,2,\cdots,2,1,1),(0,1,0,\cdots,0))$, known for the same reasons. We originally expected to generalize the latter statement, by finding an isomorphism between $\tcS^{\D_{n+1}}_{(2n-k+1,k+1)}$ and the quiver variety of type D$_{n+1}$ associated to $\tcS^{\mathrm{C}_n}_{(2n-k,k)}$ in Theorem~\ref{thm:isomorphisms-intro}. This would have partly verified a conjecture made independently by McGerty and Lusztig stating that the isomorphisms of Weyl group representations found by Reeder in \cite{reeder} lift to isomorphisms between the corresponding Nakajima and Slodowy varieties.

As we show in Corollary~\ref{cor:column}, the affine varieties $\cS^{\mathrm{C}_n}_{(2n-k,k)}$ and $\cS^{\D_{n+1}}_{(2n-k+1,k+1)}$ are isomorphic. This sharpens the previously known results that their singularities at the base-point are smoothly equivalent~\cite{kp} and isomorphic as complex analytic germs~\cite{lns}. However, there is no obvious way to deduce from this that the Springer resolutions $\tcS^{\mathrm{C}_n}_{(2n-k,k)}$ and $\tcS^{\D_{n+1}}_{(2n-k+1,k+1)}$ are isomorphic. It would be interesting to check whether the Springer fibres $\cB^{\mathrm{C}_n}_{(2n-k,k)}$ and $\cB^{\D_{n+1}}_{(2n-k+1,k+1)}$ are isomorphic; explicit descriptions of the two-row Springer fibres in type D were given by Ehrig and Stroppel~\cite{ehrigstroppel}.

In the work of Mirkovi\'c--Vybornov \cite{mvy-cr,mvy}, Maffei's result that quiver varieties of type A are isomorphic to Slodowy varieties of type A is supplemented by a result that should be thought of as dual to it: namely, that resolutions of slices in the affine Grassmannian of type A are also isomorphic to Slodowy varieties of type A. The (partly conjectural) duality here is that of symplectic dual pairs, in the sense of~\cite[Remark 1.5]{blpw}. Along the same lines, the isomorphisms of Theorem~\ref{thm:isomorphisms-intro} could be thought of as dual to some of the connections proved in~\cite{ah} between the small part of the affine Grassmannian and the nilpotent cone, in types other than A. This may explain why one should have to restrict to quiver varieties corresponding to small representations and Slodowy slices to `big' nilpotent orbits.

Finally, we recall that Losev~\cite{losev} has used Maffei's result to prove that the quantum Hamiltonian reductions associated to quiver varieties of type A are isomorphic to parabolic $W$-algebras of type A. We hope that our results on quiver varieties of type D will have a similar application.

\subsection{Acknowledgements}

We are grateful for the helpful comments of H.~Kraft, G.~Lusztig, A.~Maffei, K.~McGerty, H.~Nakajima, C.~Sorger, C.~Stroppel, and an anonymous referee.


\section{Split-quotient quivers}
\label{sect:split-quotient}


In this section we recall the general definition, due to Reiten and Riedtmann \cite{RR}, of the split-quotient quiver (our terminology) and discuss the relationship between this definition and constructions of folding and the McKay correspondence.

\subsection{Quivers and admissible automorphisms}
\label{subsect:adm-aut}
Let $(I,H)$ be a finite graph without edge loops. Here $I$ denotes the set of vertices and $H$ denotes the set of pairs of an edge together with an orientation of that edge, with $s,t:H\to I$ being the source and target functions and $\overline{\phantom{h}}:H\to H$ the involution given by reversing orientation. Let $\Omega$ be an orientation of $(I,H)$, i.e.\ a subset of $H$ such that $H$ is the disjoint union of $\Omega$ and $\overline{\Omega}$, and assume that $\Omega$ contains no oriented cycle. The triple $(I,H,\Omega)$ is what we refer to as a quiver.

Following Lusztig, we consider an \emph{admissible automorphism} $a$ of $(I,H)$. This comprises permutations of $I$ and $H$, each denoted $a$, such that $s(a(h))=a(s(h))$, $t(a(h))=a(t(h))$, $\overline{a(h)}=a(\overline{h})$, and no two adjacent vertices belong to the same $\langle a\rangle$-orbit. We assume moreover that $a(\Omega)=\Omega$; given $(I,H,a)$, there is always at least one orientation $\Omega$ compatible with $a$ in this sense~\cite[Section 12.1.1]{lusztig}. 

For any $i\in I$, let $d_i$ be the smallest positive integer such that $a^{d_i}(i)=i$. In other words, $d_i$ is the size of the $\langle a\rangle$-orbit of $i$. For $h\in H$, define $d_h$ similarly; clearly $d_h$ is a common multiple of $d_{s(h)}$ and $d_{t(h)}$ (we do not assume that it is the least common multiple, i.e.\ we allow two edges in the same $\langle a\rangle$-orbit to have the same source and target). Let $\bn$ be a common multiple of all $d_i$ and $d_h$. Then $a^\bn$ is the identity on $I$ and on $H$; in examples we will often take $\bn$ to be the order of the automorphism $a$, but it is convenient not to make this a general assumption. Set $e_i=\bn/d_i$, $e_h=\bn/d_h$.
 
\subsection{The split-quotient quiver}
\label{subsect:split-quotient}

We define a quiver $(\tI,\tH,\tOmega)$, which we call the \emph{split-quotient quiver} of $(I,H,\Omega,a,\bn)$.  In the notation of \cite{RR,demonet}, this is $Q_G$ where $Q = (I,H,\Omega)$ and $G$ is the cyclic group of order $\bn$ where the generator acts by $a$.

First we form the quotient quiver $(\hI,\hH,\hOmega)$ of $(I,H,\Omega)$ by $\langle a\rangle$. Namely, let $\hI$ denote a set of representatives for the $\langle a\rangle$-orbits in $I$, let $\hH$ be the set of $\langle a\rangle$-orbits in $H$, and let $\hOmega\subset\hH$ be the set of $\langle a\rangle$-orbits in $\Omega$. (The reason we have defined $\hI$ to be a set of representatives for the orbits, rather than the set of orbits itself, is just notational convenience.) For $\hh=\langle a\rangle\cdot h\in\hH$, define $s(\hh)$ to be the representative in $\hI$ of $\langle a\rangle\cdot s(h)$, define $t(\hh)$ similarly, and define $d_{\hh}=d_h$, $e_{\hh}=e_h$. Define $\overline{\phantom{h}}:\hH\to \hH$ by $\overline{\langle a\rangle\cdot h}=\langle a \rangle\cdot\overline{h}$. Then $(\hI,\hH,\hOmega)$ is another quiver with no edge-loops or directed cycles.

Now let $\tI$ be the set of pairs $(i,\zeta)$ where $i\in\hI$ and $\zeta\in\mu_{e_i}$  (the group of complex $(e_i)$th roots of $1$). Let $\tH$ be the set of triples $(\hh,\zeta,\zeta')$ where $\hh\in\hH$, $\zeta\in\mu_{e_{s(\hh)}}$, $\zeta'\in\mu_{e_{t(\hh)}}$, and 
\begin{equation} \label{eqn:zeta-first}
\zeta^{e_{s(\hh)}/e_{\hh}}=(\zeta')^{e_{t(\hh)}/e_{\hh}}.
\end{equation}
Let $\tOmega$ be the set of $(\hh,\zeta,\zeta')\in\tH$ such that $\hh\in\hOmega$. For $\th=(\hh,\zeta,\zeta')\in\tH$, define $s(\th)=(s(\hh),\zeta)$ and $t(\th)=(t(\hh),\zeta')$. Define $\overline{\phantom{h}}:\tH\to \tH$ by $\overline{(\hh,\zeta,\zeta')}=(\overline{\hh},\zeta',\zeta)$. Then $(\tI,\tH,\tOmega)$ is another quiver with no edge-loops or directed cycles.

The split-quotient quiver $(\tI,\tH,\tOmega)$ comes with its own natural admissible automorphism $\ta$, defined by
\begin{equation}
\ta(i,\zeta)=(i,\zeta\eta^{d_i}),\quad \ta(\hh,\zeta,\zeta')=(\hh,\zeta\eta^{d_{s(\hh)}},\zeta'\eta^{d_{t(\hh)}}),
\end{equation}
where $\eta$ is a fixed primitive $\bn$th root of $1$. Notice that as a set of representatives for the $\langle\ta\rangle$-orbits in $\tI$ we can take $\{(i,1)\,|\,i\in\hI\}$, which we can identify with $\hI$. (However, without further assumptions we cannot necessarily identify the set of $\langle\ta\rangle$-orbits in $\tH$ with $\hH$.) 

Lusztig in~\cite[Section 16]{lusztig-canonical} associated a symmetrizable Cartan matrix $(c_{ij})_{i,j\in\hI}$ to a triple $(I,H,a)$ as above, where the rows and columns are indexed by $\hI$ and the off-diagonal entries are defined by
\[
\begin{split}
-c_{ij}&=d_i^{-1}\;|\{h\in H\,|\,s(h)\in\langle a\rangle\cdot i,\,t(h)\in\langle a\rangle\cdot j\}|\\
&=d_i^{-1}\sum_{\substack{\hh\in \hH\\s(\hh)=i, t(\hh)=j}} d_{\hh}.
\end{split}
\]

\begin{lem} \label{lem:langlands}
The Cartan matrix associated to $(\tI,\tH,\ta)$ is the transpose of that associated to $(I,H,a)$.
\end{lem}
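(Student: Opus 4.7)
The plan is to apply Lusztig's formula for the Cartan matrix directly to $(\tI,\tH,\ta)$, using $\{(i,1)\,|\,i\in\hI\}$ as representatives for the $\langle\ta\rangle$-orbits, and then match the resulting off-diagonal entries to those of $(I,H,a)$ after an orientation reversal. The diagonal entries are $2$ on both sides, so the work is all in the off-diagonal ones.

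First I would compute the $\langle\ta\rangle$-orbit size $\tilde{d}_{(i,1)}$. From the formula $\ta^k(i,1)=(i,\eta^{kd_i})$, this equals $(i,1)$ precisely when $\bn\mid kd_i$, i.e.\ when $e_i\mid k$, so $\tilde{d}_{(i,1)}=e_i$ (not $d_i$). Next, for fixed $i,j\in\hI$, I would enumerate the edges $\th=(\hh,\zeta,\zeta')\in\tH$ with $s(\th)\in\langle\ta\rangle\cdot(i,1)$ and $t(\th)\in\langle\ta\rangle\cdot(j,1)$: these are exactly the triples with $s(\hh)=i$, $t(\hh)=j$, $\zeta\in\mu_{e_i}$, $\zeta'\in\mu_{e_j}$, subject to~\eqref{eqn:zeta-first}. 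For each such $\hh$, the constraint $\zeta^{e_i/e_{\hh}}=(\zeta')^{e_j/e_{\hh}}$ defines a fiber product over $\mu_{e_{\hh}}$ of two surjective homomorphisms whose kernels have orders $e_i/e_{\hh}=d_{\hh}/d_i$ and $e_j/e_{\hh}=d_{\hh}/d_j$ respectively; the fiber product has size $e_{\hh}\cdot(d_{\hh}/d_i)(d_{\hh}/d_j)=e_i\,d_{\hh}/d_j$.

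Plugging this into Lusztig's formula for $(\tI,\tH,\ta)$ gives
\[
-\tilde{c}_{ij}
=e_i^{-1}\!\!\sum_{\substack{\hh\in\hH\\s(\hh)=i,\,t(\hh)=j}}\!\!\!\! e_i\,\frac{d_{\hh}}{d_j}
=d_j^{-1}\!\!\sum_{\substack{\hh\in\hH\\s(\hh)=i,\,t(\hh)=j}}\!\!\!\! d_{\hh}.
\]
The involution $\hh\mapsto\overline{\hh}$ on $\hH$ sends $\{s(\hh)=i,t(\hh)=j\}$ bijectively to $\{s(\hh)=j,t(\hh)=i\}$ and preserves $d_{\hh}$, so the right-hand side is precisely $-c_{ji}$ in the formula from the preceding paragraph of the paper. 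Hence $\tilde{c}_{ij}=c_{ji}$, as required.

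The main obstacle is the count of $(\zeta,\zeta')$-pairs satisfying~\eqref{eqn:zeta-first}: one has to keep the two different root-of-unity groups $\mu_{e_i}$ and $\mu_{e_j}$ straight and use the fact that $d_i,d_j\mid d_{\hh}\mid\bn$ to make sense of the exponents. Everything else is a matter of simplifying ratios like $e_i/e_{\hh}=d_{\hh}/d_i$; the ``transpose'' feature emerges because the factor $e_i$ in the numerator of the pair count cancels against the $\tilde{d}_{(i,1)}^{-1}=e_i^{-1}$, leaving $d_j^{-1}$ in front of the sum rather than $d_i^{-1}$.
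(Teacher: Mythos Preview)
Your proof is correct and follows essentially the same route as the paper's: both compute the number of $\th\in\tH$ lying over a fixed $\hh$ as $e_ie_j/e_{\hh}$ and then simplify $e_i^{-1}\sum_{\hh} e_ie_j/e_{\hh}=d_j^{-1}\sum_{\hh} d_{\hh}=-c_{ji}$. The only cosmetic difference is that you spell out the fiber-product count and the bijection $\hh\mapsto\overline{\hh}$ explicitly, whereas the paper leaves these implicit.
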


\begin{proof}
If $(c_{(i,1)(j,1)})_{i,j\in\hI}$ denotes the Cartan matrix associated to $(\tI,\tH,\ta)$, we have:
\[
\begin{split}
-c_{(i,1)(j,1)}&=e_i^{-1}\;|\{(\hh,\zeta,\zeta')\in\tH\,|\,s(\hh)=i,\,t(\hh)=j\}|\\
&=e_i^{-1}\sum_{\substack{\hh\in \hH\\s(\hh)=i, t(\hh)=j}} \frac{e_ie_j}{e_{\hh}}\\
&=d_j^{-1}\sum_{\substack{\hh\in \hH\\s(\hh)=i, t(\hh)=j}} d_{\hh}=-c_{ji},
\end{split}
\]
as claimed.
\end{proof}

\begin{ex} \label{ex:typea-typed}

From Section~\ref{sect:maffei} onwards, we will concentrate on the following example. Fix a positive integer $n$. Let $(I,H,\Omega)$ be the quiver of type A$_{2n-1}$ defined as follows: $I=\{1,2,\cdots,2n-1\}$, there is an edge in $H$ from $i$ to $j$ if and only if $|i-j|=1$, and $\Omega$ consists of the oriented edges $h$ such that $t(h)$ is closer to the middle vertex $n$ than $s(h)$ is. 
Let $a$ be the involution of the quiver that interchanges $i$ and $2n-i$. Then $d_i$ equals $2$ for $i\neq n$ and $1$ for $i=n$, and $d_h=2$ for all $h$. We take $\bn=2$, so $e_i$ equals $1$ for $i\neq n$ and $2$ for $i=n$, and $e_h=1$ for all $h$.

The quotient quiver $(\hI,\hH,\hOmega)$ is then of type A$_{n}$: we set $\hI=\{1,\cdots,n\}$, so again there is an edge in $\hH$ from $i$ to $j$ if and only if $|i-j|=1$, and $\hOmega$ consists of the edges from $i$ to $i+1$ for $1\leq i\leq n-1$. Since $e_i=1$ for all $i\neq n$, the only modification required to obtain the split-quotient quiver $(\tI,\tH,\tOmega)$ is that the vertex $n$ should be split into two vertices $+$ and $-$, short for $(n,+1)$ and $(n,-1)$ respectively. Thus $(\tI,\tH,\tOmega)$ is a quiver of type D$_{n+1}$. (Here we follow the usual convention that $\D_2$ means $\text{A}_1\times\text{A}_1$ and $\D_3$ means A$_3$.) 
Pictorially:
\begin{equation*}
\begin{split}
\\
(I,H,\Omega):\,&
\vcenter{
\xymatrix{
1\ar@{->}[r]\ar@/^6ex/@{.}[rrrrrr]&\cdot\cdots\cdot\ar@{->}[r]&n-1\ar@{->}[r]\ar@/^3ex/@{.}[rr]&n\ar@{<-}[r]&n+1\ar@{<-}[r]&\cdot\cdots\cdot\ar@{<-}[r]&2n-1
}
}
\\
\\
(\tI,\tH,\tOmega):\,&
\vcenter{
\xymatrix@R=2ex{
&&&+\ar@/^3ex/@{.}[dd]\\
1\ar@{->}[r]&\cdot\cdots\cdot\ar@{->}[r]&n-1\ar@{->}[ur]\ar@{->}[dr]&\\
&&&-
}
}
\end{split}
\end{equation*}
Here, the dotted lines indicate the non-trivial orbits of the respective involutions $a$ and $\ta$. Lemma~\ref{lem:langlands} is well known in this case: the Cartan matrices associated to $(I,H,a)$ and $(\tI,\tH,\ta)$ are those of type B$_n$ and C$_n$ respectively.

Observe that in this example the split-quotient quiver of $(\tI,\tH,\tOmega,\ta,\bn)$ can be identified with the original quiver $(I,H,\Omega)$. 
\end{ex}
  
\begin{ex} \label{ex:dynkin}
More generally, suppose $(I,H,\Omega)$ is an orientation of any simply-laced finite or affine Dynkin diagram, with $a$ being a non-trivial admissible automorphism that fixes at least one vertex and $\bn$ being its order. A list of such triples $(I,H,a)$ is given in~\cite[Section 14.1]{lusztig}; the associated Cartan matrices are exactly the non-symmetric Cartan matrices of finite or affine type. It is trivial to check that in each case $(\tI,\tH,\ta)$ is another triple of the same kind with the order of $\ta$ also being $\bn$; in accordance with Lemma~\ref{lem:langlands}, $(\tI,\tH,\ta)$ is the triple giving the transpose Cartan matrix. In other words, $(\tI,\tH,\ta)$ is obtained by `unfolding' the non-simply-laced Dynkin diagram that is dual to that obtained by `folding' $(I,H,a)$. This rule is clearly self-inverse, so in all these cases the split-quotient quiver of $(\tI,\tH,\tOmega,\ta,\bn)$ can be identified with the original quiver $(I,H,\Omega)$. 
\end{ex}

\begin{ex} \label{ex:cyclic}
Suppose $(I,H,\Omega)$ is the quiver of type $\widetilde{\A}_3$ where $I=\{0,1,2,3\}$ and $\Omega$ consists of the edges from $1$ to $2$, $3$ to $2$, $1$ to $0$, and $3$ to $0$. Let $a$ be the `rotation' that interchanges $0$ and $2$, and $1$ and $3$, and let $\bn=2$. Then $e_i=1$ for all $i$, so the split-quotient quiver is the same as the quotient quiver $(\hI,\hH,\hOmega)$, which is of type $\widetilde{\A}_1$: explicitly, $\hI=\{1,2\}$ with $\hOmega$ consisting of two edges from $1$ to $2$. In this case $\ta$ is the identity.
\end{ex}

\begin{ex} \label{ex:redundant}
We have allowed $\bn$ to be a multiple of the order $\bn'$ of the automorphism $a$, rather than requiring $\bn=\bn'$. This does not create any substantially new examples of split-quotient quivers, since it follows easily from the definition that $(\tI,\tH,\tOmega)$ is the disconnected union of $\bn/\bn'$ copies of the split-quotient quiver defined using $\bn'$ instead of $\bn$. In particular, if $a$ is the identity automorphism, $(\tI,\tH,\tOmega)$ is the disconnected union of $\bn$ copies of $(I,H,\Omega)$.
\end{ex}

\subsection{Interpretation via the McKay correspondence}
\label{subsect:McKay}
Recall the McKay correspondence~\cite{mckay} between simply-laced affine Dynkin diagrams and conjugacy classes of nontrivial finite subgroups $\Gamma$ of $SL_2(\C)$, in which the vertices of the diagram $\widetilde{\Delta}_\Gamma$ label the isomorphism classes of irreducible representations of $\Gamma$. Slodowy observed~\cite[Section 6.2, Appendix III]{slod} that certain automorphisms of affine Dynkin diagrams arise from pairs $(\Gamma,\Gamma')$ of such groups where $\Gamma\subsetneq\Gamma'$, $\Gamma$ is normal in $\Gamma'$, the quotient $\Gamma'/\Gamma$ is cyclic of order $\bn$ and the centralizer of $\Gamma$ in $\Gamma'$ is contained in $\Gamma$. (This characterization of the relevant pairs is from~\cite[Section 2]{lusztig-dual}.) Explicitly, given such a pair $(\Gamma,\Gamma')$:
\begin{enumerate}
\item the conjugation action of a generator of $\Gamma'/\Gamma$ on irreducible representations of $\Gamma$ defines an order-$\bn$ automorphism $a$ of $\widetilde{\Delta}_\Gamma$, and
\item tensoring irreducible representations of $\Gamma'$ with a generator of $\widehat{\Gamma'/\Gamma}$ (thought of as a linear character of $\Gamma'$) defines an order-$\bn$ automorphism $a'$ of $\widetilde{\Delta}_{\Gamma'}$.
\end{enumerate}
By inspection of the various cases, automorphisms arising in either of these two ways are admissible (and $\bn$ is always either $2$ or $3$). The nontrivial admissible automorphisms of simply-laced affine Dynkin diagrams that do not arise thus are the `rotations' of $\widetilde{\A}_m$ and the order-$4$ automorphisms of $\widetilde{\D}_{2m}$. 

\begin{lem} \label{lem:clifford}
Let $(\Gamma,\Gamma')$ be a pair of subgroups of $SL_2(\C)$ satisfying the above conditions. Neglecting orientations, the split-quotient quiver of $(\widetilde{\Delta}_\Gamma,a)$ is $(\widetilde{\Delta}_{\Gamma'},a')$ and vice versa.
\end{lem}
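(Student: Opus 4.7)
The plan is to apply Clifford theory to the normal inclusion $\Gamma\trianglelefteq\Gamma'$ with cyclic quotient of order $\bn$. For each $\langle a\rangle$-orbit in $\mathrm{Irr}(\Gamma)$ I would pick a representative $\rho$, and let $T_\rho\supseteq\Gamma$ be its inertia subgroup in $\Gamma'$, so that $T_\rho/\Gamma$ is cyclic of order $e_\rho$. The hypothesis that the centralizer of $\Gamma$ in $\Gamma'$ lies in $\Gamma$ forces the relevant Schur multiplier obstruction to vanish (which can be checked directly for subgroups of $SL_2(\C)$), so $\rho$ extends to an irreducible representation $\widetilde{\rho}$ of $T_\rho$. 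Standard Clifford theory then identifies the irreducibles of $\Gamma'$ lying over the $\langle a\rangle$-orbit of $\rho$ with $\Ind_{T_\rho}^{\Gamma'}(\widetilde{\rho}\otimes\widetilde{\chi})$ as $\widetilde{\chi}$ ranges over $\widehat{T_\rho/\Gamma}\cong\mu_{e_\rho}$, yielding a bijection $\tI\simto\mathrm{Irr}(\Gamma')$. Tensoring by a generator $\chi$ of $\widehat{\Gamma'/\Gamma}$ sends this irreducible to $\Ind_{T_\rho}^{\Gamma'}(\widetilde{\rho}\otimes(\widetilde{\chi}\cdot\chi|_{T_\rho}))$, and $\chi|_{T_\rho}$ is a primitive $e_\rho$-th root of unity character; choosing $\chi$ and $\eta$ compatibly reproduces the formula $\ta(\rho,\zeta)=(\rho,\zeta\eta^{d_\rho})$, so the bijection intertwines $\ta$ with $a'$.

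For the edges, both McKay graphs are defined by tensoring with the restriction of the defining $2$-dimensional representation $V$ of $SL_2(\C)$. Since $V$ is pulled back from $\Gamma'$, tensoring with $V$ commutes with the conjugation action $a$ on $\mathrm{Irr}(\Gamma)$. A Clifford-theoretic decomposition of $\rho'\otimes V$ for $\rho'=\Ind_{T_\rho}^{\Gamma'}(\widetilde{\rho}\otimes\widetilde{\chi})$ would express the multiplicity $\langle\sigma',\rho'\otimes V\rangle_{\Gamma'}$ in terms of the McKay multiplicity $\langle\sigma,\rho\otimes V\rangle_{\Gamma}$ together with a compatibility between the characters $\widetilde{\chi},\widetilde{\chi}'$ on the overlap $T_\rho\cap T_\sigma$; this compatibility reduces to the root-of-unity condition \eqref{eqn:zeta-first}, matching the edges of the split-quotient quiver with those of $\widetilde{\Delta}_{\Gamma'}$.

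The main obstacle is this last step: extracting \eqref{eqn:zeta-first} cleanly from the character computation on $T_\rho\cap T_\sigma$ takes some careful bookkeeping. Since the admissible pairs $(\Gamma,\Gamma')$ form a short explicit list (cf.~\cite[Appendix III]{slod}), one could alternatively complete the verification case by case. Finally, the `vice versa' statement follows from Example~\ref{ex:dynkin}: because $a$ fixes the vertex corresponding to the trivial representation of $\Gamma$, the split-quotient construction is involutive on these simply-laced affine Dynkin diagrams, so it sends $(\widetilde{\Delta}_{\Gamma'},a')$ back to $(\widetilde{\Delta}_\Gamma,a)$.
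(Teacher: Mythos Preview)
Your approach is essentially the paper's own: the authors likewise say the statement is ``obvious from the list of cases in~\cite[Appendix III]{slod}'' and then sketch exactly the Clifford-theoretic argument you outline (induction/restriction for the vertices, projection and Mackey formulas for the edges), so your proposal is well aligned. One small imprecision: the vanishing of the extension obstruction has nothing to do with the centralizer hypothesis---it holds automatically because $T_\rho/\Gamma$ is cyclic and $H^2(\text{cyclic},\C^\times)=0$; the centralizer condition is what guarantees that the conjugation action of $\Gamma'/\Gamma$ on $\mathrm{Irr}(\Gamma)$ is faithful, i.e.\ that $a$ really has order~$\bn$.
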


\begin{proof}
This is obvious from the list of cases in~\cite[Appendix III]{slod}, but it is worth pointing out that it also follows from the basic Clifford-theory relationship between irreducible representations of $\Gamma$ and $\Gamma'$, at its simplest here because $\Gamma'/\Gamma$ is assumed to be cyclic. Explicitly, if $S_i$ is the irreducible representation of $\Gamma$ labelled by the vertex $i$ of $\widetilde{\Delta}_\Gamma$, then the irreducible representations of $\Gamma'$ labelled by the corresponding $\langle a'\rangle$-orbit of vertices of $\widetilde{\Delta}_{\Gamma'}$ are the constituents of $\Ind_\Gamma^{\Gamma'}(S_i)$; and one has the analogous statement for an irreducible representation of $\Gamma'$, using $\Res_\Gamma^{\Gamma'}$. The edges can be matched up using the projection and Mackey formulas. 
\end{proof}

\begin{ex} \label{ex:affinetypea-typed}
One case of such a pair $(\Gamma,\Gamma')$ occurs when $\Gamma$ is cyclic of order $2n$ and $\Gamma'$ is binary dihedral of order $4n$, for $n\geq 2$. Then $\widetilde{\Delta}_\Gamma$ is of type $\widetilde{\A}_{2n-1}$ with $a$ being the involution that fixes the extra vertex and is the usual diagram involution of the sub-diagram of type $\A_{2n-1}$, whereas $\widetilde{\Delta}_{\Gamma'}$ is of type $\widetilde{\D}_{n+2}$ with $a'$ being the involution that exchanges the end vertices in pairs and fixes all other vertices. Importantly for our arguments in \S\ref{subsect:nakajima}, one can recover from this example the $\A_{2n-1}$/$\D_{n+1}$ split-quotient pair of Example~\ref{ex:typea-typed}, by deleting the extra vertex of $\widetilde{\A}_{2n-1}$ and the $\langle a'\rangle$-orbit of the extra vertex of $\widetilde{\D}_{n+2}$.
\end{ex}


\section{Diagram automorphisms of Nakajima quiver varieties}
\label{sect:diag-aut}


In this section, we recall the definition and basic properties of Nakajima quiver varieties, and describe the fixed-point subvariety of such a quiver variety under a suitable kind of diagram automorphism in Theorem \ref{thm:fixed-points}. The description involves the split-quotient quiver of \S\ref{sect:split-quotient}.

\subsection{Quiver varieties}
\label{subsect:quiver-vars}

We follow the notation of Nakajima~\cite{nak2} with minor modifications. 

Let $V=\bigoplus_{i\in I}V_i$ and $W=\bigoplus_{i\in I}W_i$ be $I$-graded finite-dimensional complex vector spaces. Define the vector space
\begin{equation}
\bM(V,W)=\bigoplus_{h\in H}\Hom(V_{s(h)},V_{t(h)})\oplus\bigoplus_{i\in I}\Hom(W_i,V_i)\oplus\bigoplus_{i\in I}\Hom(V_i,W_i).
\end{equation}
An element of $\bM(V,W)$ will be written as a tuple $(B_h,\Gamma_i,\Delta_i)$,
it being understood that $h$ runs over $H$ and $i$ over $I$.

Let $G_V=\prod_{i\in I}\GL(V_i)$ and $G_W=\prod_{i\in I}\GL(W_i)$. These groups have commuting actions on $\bM(V,W)$ defined as follows: for $(g_i)\in G_V$ and $(\alpha_i)\in G_W$,
\begin{equation}
\begin{split}
(g_i)\cdot(B_h,\Gamma_i,\Delta_i)&=(g_{t(h)}B_hg_{s(h)}^{-1},g_i\Gamma_i,\Delta_i g_i^{-1}),\\
(\alpha_i)\cdot(B_h,\Gamma_i,\Delta_i)&=(B_h,\Gamma_i\alpha_i^{-1},\alpha_i\Delta_i).
\end{split}
\end{equation}

Let $\Lambda(V,W)$ be the closed subvariety of $\bM(V,W)$ defined by the following collection of equations (one equation for each $i\in I$, with both sides belonging to $\End(V_i)$):
\begin{equation} \label{eqn:adhm}
\sum_{\substack{h\in\Omega\\s(h)=i}}B_{\overline{h}}B_h\;-\;\sum_{\substack{h\in\Omega\\t(h)=i}}B_hB_{\overline{h}}=\Gamma_i\Delta_i.
\end{equation}
It is obvious that $\Lambda(V,W)$ is preserved by the actions of $G_V$ and $G_W$.

Following~\cite{lusztig-quiver, maffei} we say that a point $(B_h,\Gamma_i,\Delta_i)\in\Lambda(V,W)$ is \emph{stable} if the subspace $\bigoplus_{i\in I}\img(\Gamma_i)$ generates $V$ under the action of all the maps $B_h$. Note that this is dual to the stability condition used in~\cite{nak2}, see~\cite[Remark 6]{maffei}. Let $\Lambda(V,W)^s$ be the (possibly empty) open subvariety of $\Lambda(V,W)$ consisting of stable points. It is obvious that $\Lambda(V,W)^s$ is preserved by the actions of $G_V$ and $G_W$.

\begin{prop}\cite[Lemma 3.10]{nak2} \label{prop:triv-stab}
The variety $\Lambda(V,W)^s$ is nonsingular when it is nonempty, and $G_V$ acts freely on $\Lambda(V,W)^s$. 
\end{prop}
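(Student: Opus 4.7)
I would prove the two assertions in order, using the first as input for the second. For \emph{freeness}, suppose $g=(g_i)\in G_V$ stabilises a point $x=(B_h,\Gamma_i,\Delta_i)\in\Lambda(V,W)^s$. The action formulas yield $g_i\Gamma_i=\Gamma_i$ and $g_{t(h)}B_h=B_h g_{s(h)}$ for every $h\in H$, so the $I$-graded subspace $V'\subseteq V$ on which $g$ acts as the identity contains $\img(\Gamma_i)$ for each $i$ and is stable under every $B_h$. Stability then forces $V'=V$, hence $g=\id$.

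For \emph{nonsingularity}, I would invoke the moment-map picture. The vector space $\bM(V,W)$ is canonically the cotangent bundle of $\bigoplus_{h\in\Omega}\Hom(V_{s(h)},V_{t(h)})\oplus\bigoplus_i\Hom(W_i,V_i)$, with the operators $B_{\overline{h}}$ (for $h\in\Omega$) and $\Delta_i$ playing the role of dual coordinates; it is therefore symplectic, and the $G_V$-action is Hamiltonian. A direct matrix computation identifies its moment map $\mu\colon\bM(V,W)\to\bigoplus_i\End(V_i)\cong\mathrm{Lie}(G_V)^*$ (via the trace pairing) with the map whose $i$-th component is the difference of the two sides of~\eqref{eqn:adhm}, so that $\Lambda(V,W)=\mu^{-1}(0)$. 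At any $x$, the moment-map identity $\langle d\mu_x(v),\xi\rangle=\omega_x(\xi\cdot x,v)$ combined with nondegeneracy of $\omega$ shows that $\mathrm{image}(d\mu_x)^{\perp}$ equals the Lie algebra of the stabiliser of $x$ in $G_V$. By the freeness just proved, this stabiliser is trivial for every $x\in\Lambda(V,W)^s$, so $d\mu_x$ is surjective and $\mu^{-1}(0)$ is smooth at $x$; since $\Lambda(V,W)^s$ is open in $\mu^{-1}(0)$, the conclusion follows.

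The only nonformal input is the identification of $\mu$ as a moment map for the natural $G_V$-action, which is the standard cotangent-bundle computation and presents no real difficulty. Everything else reduces to the stability linear algebra of the first paragraph together with the general submersion principle, so I do not anticipate a substantive obstacle.
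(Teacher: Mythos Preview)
Your argument is correct and is precisely the standard one: the paper itself gives no proof but cites Nakajima's Lemma~3.10, and what you have written is exactly Nakajima's proof --- freeness via the graded subspace $\ker(g-\mathrm{id})$ being $(B_h)$-stable and containing each $\img(\Gamma_i)$, then smoothness from surjectivity of the differential of the moment map at points with trivial infinitesimal stabiliser. Nothing further is needed.
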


Let $\C[\Lambda(V,W)]$ denote the ring of regular functions on $\Lambda(V,W)$. Consider the graded algebra $A(V,W)=\bigoplus_{n\geq 0}A(V,W)_n$, where
\begin{equation}
\begin{split}
&A(V,W)_n=\left\{f\in\C[\Lambda(V,W)]\,\left|\,
f((g_i)\cdot x)=(\prod_{i\in I}\det(g_i))^n f(x),\right.\right.\\
&\left.\phantom{f((g_i)\cdot x)=(\prod_{i\in I}\det(g_i))^n f(x)}\,\text{ for all }(g_i)\in G_V,\,x\in\Lambda(V,W)\right\}.
\end{split}
\end{equation}
Note that, in accordance with our choice of stability condition, we have replaced $\det^{-1}$ in Nakajima's definition~\cite{nak2} with $\det$. We then define the quiver variety $\fM(V,W)$, the associated affine varieties $\fM_0(V,W)$ and $\fM_1(V,W)$, and the associated projective variety $\fL(V,W)$ by
\begin{equation}
\begin{split}
\fM(V,W)&=\Proj\, A(V,W),\\
\fM_0(V,W)&=\Spec A(V,W)_0=G_V\backslash\!\backslash\Lambda(V,W),\\
\fM_1(V,W)&=\pi(\fM(V,W))\;\text{ (a closed subvariety of $\fM_0(V,W)$),}\\
\fL(V,W)&=\pi^{-1}([0])\;\text{ (a closed subvariety of $\fM(V,W)$),}
\end{split}
\end{equation} 
where $\pi:\fM(V,W)\to\fM_0(V,W)$ is the canonical projective morphism, and $[0]\in\fM_0(V,W)$ is the maximal ideal of $A(V,W)_0$ consisting of functions vanishing at $0\in\Lambda(V,W)$. The group $G_W$ clearly acts on $A(V,W)$ by grading-preserving automorphisms, so it acts on $\fM(V,W)$, $\fM_0(V,W)$, $\fM_1(V,W)$ and $\fL(V,W)$, and $\pi$ is $G_W$-equivariant.

\begin{prop}\cite[Lemma 3.8, Corollary 3.12, Theorem 3.21]{nak2} \label{prop:geom-pts}
\begin{enumerate}
\item For $x\in\Lambda(V,W)$, $x$ is stable if and only if there exists some $f\in A(V,W)_n$, $n>0$, such that $f(x)\neq 0$.
\item The resulting morphism $\Lambda(V,W)^s\to\fM(V,W)$ is a geometric quotient by the action of $G_V$. In particular, the geometric points of $\fM(V,W)$ are in bijection with the $G_V$-orbits in $\Lambda(V,W)^s$.
\item The variety $\fM(V,W)$ is nonsingular when it is nonempty.
\item When $\fM(V,W)$ is nonempty, so is $\fL(V,W)$ \textup{(}i.e.\ $[0]\in\fM_1(V,W)$\textup{)}.
\end{enumerate}
\end{prop}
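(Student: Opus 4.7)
\emph{Proof proposal.} All four parts are standard consequences of the construction of $\fM(V,W)$ as a GIT quotient $\Proj A(V,W)$, where $A(V,W)$ is the ring of semi-invariants for the characters $\chi_n:(g_i)\mapsto\prod_i\det(g_i)^n$ of $G_V$. The plan is to treat (1) via the Hilbert--Mumford numerical criterion, and then to derive (2), (3), (4) from (1) together with Proposition~\ref{prop:triv-stab}.

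For (1), I would apply the Hilbert--Mumford criterion for the character $\chi_1$. One-parameter subgroups $\lambda:\C^\times\to G_V$ correspond to $\Z$-gradings of $V$ compatible with the $I$-grading: given a decomposition $V_i=\bigoplus_k V_{i,k}$, let $\lambda(t)$ act on $V_{i,k}$ by $t^k$. Inspecting each summand of $\bM(V,W)$, the condition that $\lim_{t\to 0}\lambda(t)\cdot x$ exists translates into the assertion that the descending filtration $V_{i,\geq k}=\bigoplus_{k'\geq k}V_{i,k'}$ is $B_h$-invariant, that $\img(\Gamma_i)\subset V_{i,\geq 0}$, and that $\Delta_i$ vanishes on $V_{i,<0}$. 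The weight pairing is $\langle\chi_1,\lambda\rangle=\sum_{i,k}k\dim V_{i,k}$. The generation condition (that $V$ is generated by $\bigoplus_i\img(\Gamma_i)$ under the $B_h$) is then exactly what is needed to force any such nontrivial $\lambda$ to satisfy $\langle\chi_1,\lambda\rangle>0$, which is the numerical stability criterion for $\chi_1$. Because $G_V$ already acts freely on this locus (Proposition~\ref{prop:triv-stab}), there can be no strictly semistable points, so GIT semistability collapses to stability and matches the combinatorial condition.

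Granted (1), part (2) is immediate from standard GIT: $\Lambda(V,W)^s$ is the open locus on which some section of the ample line bundle $\cO(1)$ on $\Proj A(V,W)$ pulls back to a non-vanishing function, and over the stable locus one obtains a geometric quotient. For (3), smoothness transfers from $\Lambda(V,W)^s$ (smooth by Proposition~\ref{prop:triv-stab}) to $\fM(V,W)$ because a free reductive quotient of a smooth variety is smooth. For (4), the diagonal action $t\cdot(B_h,\Gamma_i,\Delta_i)=(tB_h,t\Gamma_i,t\Delta_i)$ preserves $\Lambda(V,W)$ (each side of~\eqref{eqn:adhm} scales by $t^2$) and commutes with $G_V$, so it descends to a $\C^\times$-action on $\fM(V,W)$ making $\pi$ equivariant, with $[0]$ as the unique attracting fixed point on $\fM_0(V,W)$. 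Since $\pi$ is projective, every point of $\fM(V,W)$ has a limit as $t\to 0$ lying in $\pi^{-1}([0])=\fL(V,W)$, so $\fL(V,W)\neq\emptyset$.

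The main obstacle is the Hilbert--Mumford translation in (1): the framing maps $\Gamma_i,\Delta_i$ contribute asymmetrically to the weight pairings, and one has to arrange the sign conventions (compatibly with the choice of $\det$ rather than $\det^{-1}$) so that the combinatorial generation condition matches numerical stability for the chosen character. Once this identification is in place, parts (2)--(4) follow by general GIT principles and the existence of the contracting $\C^\times$-action.
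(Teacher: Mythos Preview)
The paper does not prove this proposition: it is quoted from Nakajima \cite{nak2} with the precise citations \textup{[Lemma 3.8, Corollary 3.12, Theorem 3.21]}, so there is no ``paper's own proof'' to compare against. Your sketch is therefore supplying an argument the authors deliberately omitted.

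That said, your outline is essentially sound and is the standard route. A couple of small corrections. First, in your Hilbert--Mumford analysis the condition on $\Delta_i$ has the wrong sign: since the $G_V$-action sends $\Delta_i\mapsto\Delta_i g_i^{-1}$, existence of $\lim_{t\to 0}\lambda(t)\cdot x$ forces $\Delta_i$ to vanish on $V_{i,>0}$, not on $V_{i,<0}$. This slip is harmless, because the $\Delta_i$ constraint plays no role in matching the generation condition to numerical (semi)stability; only the $B_h$- and $\Gamma_i$-constraints are used. Second, the sentence ``Because $G_V$ acts freely on this locus, there can be no strictly semistable points'' is not the right justification: freeness of the action does not by itself rule out strictly semistable points. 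What actually does the work is your own computation: once you know $V_{i,k}=0$ for $k<0$, you get $\langle\chi_1,\lambda\rangle=\sum_{i,k\geq 0} k\dim V_{i,k}\geq 0$ with equality only for trivial $\lambda$, so semistable and stable coincide directly. Your arguments for (2)--(4), via the geometric-quotient property on the stable locus, smoothness descending along a free quotient, and the contracting $\C^\times$-action together with properness of $\pi$, are correct.
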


A supplementary result was shown by Crawley--Boevey:

\begin{prop}\cite[Section 1]{c-b}\label{prop:connected}
The variety $\fM(V,W)$ is connected when it is nonempty.
\end{prop}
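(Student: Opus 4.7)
The plan is to apply Crawley-Boevey's framed-quiver reformulation of $\fM(V,W)$. Enlarge $(I, H, \Omega)$ by adjoining a new vertex $\infty$ together with $w_i$ edges from $\infty$ to $i$ for each $i \in I$, obtaining an extended quiver $(I^+, H^+)$. Setting $V_\infty = \C$ and $V^+ = V \oplus V_\infty$, the data $(B_h, \Gamma_i, \Delta_i) \in \bM(V,W)$ reassembles into a representation of the double of $(I^+, H^+)$ on $V^+$, with $\Gamma_i$ and $\Delta_i$ playing the roles of edge maps along the new edges and their reverses. The ADHM equations \eqref{eqn:adhm} become the moment-map relations at vertices $i \in I$; the relation at $\infty$ is automatic, since on the one-dimensional space $V_\infty$ it is forced by the trace identity already implied by the equations at the other vertices. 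Thus $\Lambda(V,W)$ is identified with the representation variety of the preprojective algebra $\Pi(I^+, H^+)$ at dimension vector $(\bv, 1)$, and the stability condition of \S\ref{subsect:quiver-vars} translates to the $\Pi$-module being generated by $V_\infty$.

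Under this dictionary the action of $G_V$ on $\Lambda(V,W)^s$ matches the natural free action of $G_V = G_{V^+}/\GL(V_\infty)$ on $\infty$-generated $\Pi$-modules, and $\Lambda(V,W)^s \to \fM(V,W)$ is a geometric quotient by this connected group (Proposition~\ref{prop:geom-pts}(2)). Consequently $\fM(V,W)$ is connected if and only if $\Lambda(V,W)^s$ is; and since the latter is smooth by Proposition~\ref{prop:triv-stab}, its connectedness is equivalent to its irreducibility.

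The substance of the proof, and the main obstacle, is therefore to establish irreducibility of the $\infty$-generated locus inside the preprojective representation variety. Crawley-Boevey's strategy is to stratify this locus by the isomorphism type of a natural filtration on the cyclic $\Pi$-module (for instance, by iterated quotients obtained by peeling off generators starting from $V_\infty$), and to exhibit each stratum as the total space of a vector bundle over a product of smaller stable loci. His dimension formula for representation varieties of preprojective algebras then forces every non-generic stratum to have strictly smaller dimension than the expected one, leaving a unique top-dimensional, hence dense, irreducible stratum. This dimension estimate, which ultimately rests on homological vanishing for preprojective algebras on the stable locus, is the single ingredient one cannot sidestep; once it is in place, irreducibility of $\Lambda(V,W)^s$ and thus connectedness of $\fM(V,W)$ follow immediately.
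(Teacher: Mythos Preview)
The paper gives no proof of this proposition; it is stated with a bare citation to \cite{c-b}. Your first two paragraphs correctly reproduce Crawley-Boevey's framed-quiver trick and the reduction of connectedness of $\fM(V,W)$ to irreducibility of the stable locus $\Lambda(V,W)^s$ for the extended quiver with dimension vector $(\bv,1)$; this is exactly his starting point, and your observation that $\theta$-semistability and $\theta$-stability coincide here because $\alpha'_\infty=1$ is the same one he makes.

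Your third paragraph, however, is not an accurate account of how he finishes, and is too vague to stand as a proof on its own. Crawley-Boevey does not stratify the $\infty$-generated locus by ``filtration type of the cyclic $\Pi$-module,'' nor does he exhibit strata as vector bundles over products of smaller stable loci; no construction of that shape appears in \cite{c-b}. His irreducibility results concern the entire moment-map fibre $\mu^{-1}(\lambda)$ and its affine quotient, and rest on his characterization of the set $\Sigma_\lambda$ of dimension vectors admitting simple representations, together with a stratification of the affine quotient by \emph{representation type} (the Jordan--H\"older data of the associated semisimple representation). The dimension estimate that carries his argument is the root-combinatorial inequality $p(\alpha')>\sum_j p(\beta_j)$ defining $\Sigma_\lambda$, not a comparison of strata inside the stable locus. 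Since your sketch cannot be completed without importing the actual content of \cite{c-b}, it ultimately adds nothing beyond what the paper's citation already directs the reader to consult.
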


Recall that the geometric points of $\fM_0(V,W)$ are in bijection with the closed $G_V$-orbits in $\Lambda(V,W)$, and that the map induced by $\pi$ on geometric points sends a $G_V$-orbit in $\Lambda(V,W)^s$ to the unique closed $G_V$-orbit in its closure in $\Lambda(V,W)$. Hence the geometric points of $\fL(V,W)$ are in bijection with the $G_V$-orbits in $\Lambda(V,W)^s$ whose closure in $\Lambda(V,W)$ contains $0$. Lusztig gave an alternative characterization of these orbits:

\begin{prop} \cite[Proposition 14.2(a)]{lusztig-canonical} \cite[Lemma 2.22]{lusztig-quiver} \label{prop:nilpotency}
For $(B_h,\Gamma_i,\Delta_i)\in\Lambda(V,W)^s$, the closure of $G_V\cdot(B_h,\Gamma_i,\Delta_i)$ in $\Lambda(V,W)$ contains $0$ if and only if:
\begin{enumerate}
\item $\Delta_i=0$ for all $i\in I$, and
\item $(B_h)$ is nilpotent, i.e.\ every sufficiently long product $B_{h_1}B_{h_2}\cdots B_{h_s}$ is zero.
\end{enumerate}
Moreover, if the underlying graph $(I,H)$ is of simply-laced finite Dynkin type, then condition \textup{(1)} implies condition \textup{(2)}.
\end{prop}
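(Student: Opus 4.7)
The plan is to apply the Hilbert--Mumford criterion to the action of the reductive group $G_V$ on the affine variety $\Lambda(V,W)$: since $0$ is a $G_V$-fixed point, $0 \in \overline{G_V\cdot x}$ if and only if there is a one-parameter subgroup $\lambda\colon \C^\times \to G_V$ with $\lim_{t\to 0}\lambda(t)\cdot x = 0$. Any such $\lambda$ gives a weight decomposition $V_i = \bigoplus_a V_i(a)$, with respect to which the components $B_h^{(a,b)}\colon V_{s(h)}(a)\to V_{t(h)}(b)$, $\Gamma_i^{(a)}\colon W_i\to V_i(a)$, $\Delta_i^{(a)}\colon V_i(a)\to W_i$ carry $\lambda$-weights $b-a$, $a$, $-a$ respectively, and the limit exists and equals $0$ exactly when every nonzero such component has strictly positive weight.

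For the ``only if'' direction, fix a witnessing $\lambda$ and set $V_i^+ := \bigoplus_{a>0}V_i(a)$. Positivity of weights forces $\img(\Gamma_i)\subseteq V_i^+$ and $B_h(V_{s(h)}^+)\subseteq V_{t(h)}^+$, so $V^+:=\bigoplus_i V_i^+$ is a $B$-invariant subspace containing every $\img(\Gamma_i)$; by the stability of $x$ this gives $V^+=V$, so every $\lambda$-weight on $V$ is strictly positive. Consequently, $\Delta_i=0$ for all $i$, since a nonzero component $\Delta_i^{(a)}$ would require a weight space of nonpositive weight. Moreover, since all weights lie in a finite set $\{1,\ldots,N\}$ and $B_h$ strictly increases weight, any composition $B_{h_1}\cdots B_{h_s}$ of length $s > N$ vanishes, establishing nilpotency of $(B_h)$.

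For the ``if'' direction, assume $\Delta_i=0$ and $(B_h)$ nilpotent, and exhibit a witnessing $\lambda$ explicitly. Define the ascending filtration $V_i^{(k)} = \{v\in V_i : B_{h_1}\cdots B_{h_k}v = 0 \text{ for every tuple } (h_1,\ldots,h_k) \text{ with } s(h_k)=i \text{ and } s(h_j)=t(h_{j+1})\}$. Nilpotency yields a uniform $N$ with $V_i^{(N)}=V_i$, and directly from the definition $B_h(V_{s(h)}^{(k)})\subseteq V_{t(h)}^{(k-1)}$. Choose any vector-space splitting $V_i = \bigoplus_{k=1}^N V_i^{=k}$ refining this filtration, and let $\lambda(t)$ act on $V_i^{=k}$ by $t^{N-k+1}$. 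All weights lie in $\{1,\ldots,N\}$, so $\Gamma_i$ scales to $0$; every component of $B_h$ sending $V_{s(h)}^{=k}\to V_{t(h)}^{=j}$ has $j\leq k-1$ and thus weight $k-j \geq 1$, so it too scales to $0$; and $\Delta_i = 0$ is preserved throughout.

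For the final clause, under $\Delta_i=0$ the equation \eqref{eqn:adhm} becomes the defining relation of the preprojective algebra $\Pi(I,H)$, making $V$ a $\Pi(I,H)$-module. When $(I,H)$ is of simply-laced finite Dynkin type, $\Pi(I,H)$ is finite-dimensional with nilpotent arrow ideal, so the arrows automatically act nilpotently on $V$, which is exactly condition (2). The main technical point is ensuring that the filtration in the ``if'' direction is strictly decreased by each $B_h$; this is built into the definition of $V_i^{(k)}$ via the length of the killing paths, so no additional cleverness in choosing the grading is required.
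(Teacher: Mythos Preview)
The paper does not give its own proof of this proposition; it is quoted from Lusztig's papers \cite{lusztig-canonical,lusztig-quiver} and used as a black box. So there is nothing in the paper to compare against.

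That said, your proof is correct and is essentially the standard argument. The Hilbert--Mumford step is legitimate because $G_V=\prod_i\GL(V_i)$ is reductive and acts linearly on $\bM(V,W)\supseteq\Lambda(V,W)$, and $\{0\}$ is a closed orbit. The use of stability in the ``only if'' direction is exactly right: without it one can have $0$ in the orbit closure while some $\Delta_i\neq 0$ (e.g.\ a single vertex, no edges, $\Gamma=0$, $\Delta\neq 0$). Your filtration in the ``if'' direction does what is needed, and the observation that $B_h$ strictly raises $\lambda$-weight gives nilpotency cleanly. For the final clause, invoking the finite-dimensionality of the preprojective algebra in Dynkin type (hence nilpotency of its arrow ideal, which is its Jacobson radical) is precisely the content of \cite[Proposition~14.2(a)]{lusztig-canonical}.
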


The varieties $\bM(V,W)$, $\Lambda(V,W)$, etc.\ clearly depend (up to isomorphism) only on the graded dimensions $\bv=(v_i)$ and $\bw=(w_i)$ where $v_i=\dim V_i$ and $w_i=\dim W_i$, and it is therefore common to rename them as $\bM(\bv,\bw)$, $\Lambda(\bv,\bw)$, etc., when it is not important to stress the particular graded vector spaces $V$ and $W$.

\begin{rmk} \label{rmk:weight-space}
In Nakajima's geometric construction~\cite{nak2} of integrable highest-weight modules for the Kac--Moody algebra with symmetric Cartan matrix given by $(I,H)$, the top-degree homology of the variety $\fM(\bv,\bw)$ is a weight space of the module with highest weight $\sum_{i\in I}w_i\varpi_i$, namely the one with weight $\sum_{i\in I}w_i\varpi_i-\sum_{i\in I}v_i\alpha_i$, where $\varpi_i$ and $\alpha_i$ denote the fundamental weights and simple roots. Moreover, the irreducible components of $\fL(\bv,\bw)$ index a basis of this weight space. In particular, this means that $\fM(\bv,\bw)$ is nonempty precisely when the corresponding weight space does not vanish. This construction plays no logical role in the present paper.
\end{rmk}

\begin{ex} \label{ex:typea}
Fix a positive integer $n$. 
Let $(I,H,\Omega)$ be the quiver of type A$_{2n-1}$ from Example \ref{ex:typea-typed}. 
We indicate this particular quiver by means of a superscript $\A_{2n-1}$, as in the notation $\fM^{\A_{2n-1}}(\bv,\bw)$ of the introduction.
When considering this quiver, to make the notation more transparent, we write $B_{i,i+1}$, say, instead of $B_h$ where $s(h)=i+1$ and $t(h)=i$. Graphically, the configuration of maps in the definition of $\bM^{\A_{2n-1}}(V,W)$ may be represented as follows:
\begin{equation*}
\vcenter{
\xymatrix@R=15pt{
W_1\ar@/_/[dd]_-{\Gamma_1}&\cdots&W_{n-1}\ar@/_/[dd]_-{\Gamma_{n-1}}&W_n\ar@/_/[dd]_(.4){\Gamma_n}&W_{n+1}\ar@/_/[dd]_-{\Gamma_{n+1}}&\,\cdots\,&W_{2n-1}\ar@/_/[dd]_-{\Gamma_{2n-1}}
\\
\\
V_{1}\ar@/^/[r]^-{B_{2,1}}\ar@/_/[uu]_-{\Delta_1}&\cdots\ar@/^/[l]^-{B_{1,2}}\ar@/^/[r]^-{B_{n-1,n-2}}&V_{n-1}\ar@/^/[l]^-{B_{n-2,n-1}}\ar@/^/[r]^-{B_{n,n-1}}\ar@/_/[uu]_-{\Delta_{n-1}}&V_{n}\ar@/^/[l]^-{B_{n-1,n}}\ar@/^/[r]^-{B_{n+1,n}}\ar@/_/[uu]_(.6){\Delta_n}&V_{n+1}\ar@/^/[l]^-{B_{n,n+1}}\ar@/^/[r]^-{B_{n+2,n+1}}\ar@/_/[uu]_-{\Delta_{n+1}}&\,\cdots\,\ar@/^/[l]^-{B_{n+1,n+2}}\ar@/^/[r]^-{B_{2n-1,2n-2}}&V_{2n-1}\ar@/^/[l]^-{B_{2n-2,2n-1}}\ar@/_/[uu]_-{\Delta_{2n-1}}
}
}
\end{equation*}

We extend the notation $B_{i,i+1}$ to paths of length $>1$, recording in the subscript only the start and end of the path and any intermediate turning points. For instance, $B_{i,i-2,i+1}$ denotes $B_{i,i-1}B_{i-1,i-2}B_{i-2,i-1}B_{i-1,i}B_{i,i+1}$. In this notation, the defining equations \eqref{eqn:adhm} of $\Lambda^{\A_{2n-1}}(V,W)$ become:
\begin{equation} \label{eqn:quadratic}
\begin{split}
B_{1,2,1}&=\Gamma_{1}\Delta_{1},\\
B_{2,3,2}-B_{2,1,2}&=\Gamma_{2}\Delta_{2},\\
&\vdots\\
B_{n-1,n,n-1}-B_{n-1,n-2,n-1}&=\Gamma_{n-1}\Delta_{n-1},\\
-B_{n,n-1,n}-B_{n,n+1,n}&=\Gamma_{n}\Delta_{n},\\
B_{n+1,n,n+1}-B_{n+1,n+2,n+1}&=\Gamma_{n+1}\Delta_{n+1},\\
&\vdots\\
B_{2n-2,2n-3,2n-2}-B_{2n-2,2n-1,2n-2}&=\Gamma_{2n-2}\Delta_{2n-2},\\
B_{2n-1,2n-2,2n-1}&=\Gamma_{2n-1}\Delta_{2n-1}.
\end{split}
\end{equation}
\end{ex}

\begin{ex} \label{ex:typed}
The other main example for this paper is the quiver of type $\D_{n+1}$ (for $n$ a positive integer) denoted $(\tI,\tH,\tOmega)$ in Example ~\ref{ex:typea-typed}. 
We indicate this particular quiver by means of a superscript $\D_{n+1}$.
With notation analogous to that of Example~\ref{ex:typea}, the configuration of maps in the definition of $\bM^{\D_{n+1}}(V,W)$ may be represented:
\begin{equation*}
\vcenter{
\xymatrix@R=15pt{
&&&&W_+\ar@/_/[dd]_(.4){\Gamma_+}\\
W_1\ar@/_/[dd]_-{\Gamma_1}&\cdot\cdots\cdot&W_{n-2}\ar@/_/[dd]_-{\Gamma_{n-2}}&W_{n-1}\ar@/_/[dd]_(.3){\Gamma_{n-1}}&
\\
&&&&V_+\ar@/_/[uu]_(.6){\Delta_+}\ar@/^/[dl]^(.4){B_{n-1,+}}&W_-\ar@/_/[ddl]_(.4){\Gamma_-}
\\
V_{1}\ar@/^/[r]^-{B_{2,1}}\ar@/_/[uu]_-{\Delta_1}&\cdot\cdots\cdot\ar@/^/[l]^-{B_{1,2}}\ar@/^/[r]^-{B_{n-2,n-3}}&V_{n-2}\ar@/^/[l]^-{B_{n-3,n-2}}\ar@/^/[r]^-{B_{n-1,n-2}}\ar@/_/[uu]_-{\Delta_{n-2}}&V_{n-1}\ar@/^/[l]^-{B_{n-2,n-1}}\ar@/^/[ur]^(.7){B_{+,n-1}}\ar@/^/[dr]^(.5){B_{-,n-1}}\ar@/_/[uu]_(.7){\Delta_{n-1}}&\\
&&&&V_-\ar@/_/[uur]_(.6){\Delta_-}\ar@/^/[ul]^(.4){B_{n-1,-}}\\
}
}
\end{equation*}
and the defining equations of $\Lambda^{\D_{n+1}}(V,W)$ are:
\begin{equation} \label{eqn:d-quadratic}
\begin{split}
B_{1,2,1}&=\Gamma_{1}\Delta_{1},\\
B_{2,3,2}-B_{2,1,2}&=\Gamma_{2}\Delta_{2},\\
&\vdots\\
B_{n-2,n-1,n-2}-B_{n-2,n-3,n-2}&=\Gamma_{n-2}\Delta_{n-2},\\
B_{n-1,+,n-1}+B_{n-1,-,n-1}-B_{n-1,n-2,n-1}&=\Gamma_{n-1}\Delta_{n-1},\\
-B_{+,n-1,+}&=\Gamma_{+}\Delta_{+},\\
-B_{-,n-1,-}&=\Gamma_{-}\Delta_{-}.
\end{split}
\end{equation}
\end{ex}

\subsection{Diagram automorphisms}
\label{subsect:diag-aut}

Now let $a$ be an admissible automorphism of the quiver $(I,H,\Omega)$, as in \S\ref{subsect:adm-aut}.
We want to define a diagram automorphism of the quiver variety $\fM(V,W)$ corresponding to $a$. 

For this we need to choose isomorphisms $\varphi_i:V_i\simto V_{a(i)}$ and $\sigma_i:W_i\simto W_{a(i)}$ for all $i\in I$. In particular, we require $v_i=v_{a(i)}$ and $w_i=w_{a(i)}$ for all $i\in I$. As in~\cite[Section 12.1.2]{lusztig}, we impose the condition that 
\begin{equation} \label{eqn:varphi}
\varphi_{a^{d_i-1}(i)}\cdots\varphi_{a(i)}\varphi_i=\mathrm{id}_{V_i}\text{ for all $i\in I$.}
\end{equation} 
Thus, the isomorphisms $\varphi_i$ canonically identify the various vector spaces $V_i$ as $i$ runs over an $\langle a\rangle$-orbit in $I$. 
By contrast, for the isomorphisms $\sigma_i$ we impose only the weaker condition that
\begin{equation} \label{eqn:sigma}
(\sigma_{a^{d_i-1}(i)}\cdots\sigma_{a(i)}\sigma_i)^{e_i}=\mathrm{id}_{W_i}\text{ for all $i\in I$.}
\end{equation}
Here $e_i$ depends on the choice of $\bn$ as in \S\ref{subsect:adm-aut}.
For any $i\in I$ and $\zeta\in\mu_{e_i}$, let $W_{i,\zeta}$ denote the $\zeta$-eigenspace of $\sigma_{a^{d_i-1}(i)}\cdots\sigma_{a(i)}\sigma_i$, and let $w_{i,\zeta}=\dim W_{i,\zeta}$. Thus $W_i=\bigoplus_{\zeta\in\mu_{e_i}}W_{i,\zeta}$, $w_i=\sum_{\zeta\in\mu_{e_i}}w_{i,\zeta}$. It is obvious that $\sigma_i$ restricts to an isomorphism $W_{i,\zeta}\simto W_{a(i),\zeta}$, so $w_{i,\zeta}=w_{a(i),\zeta}$.  

We can now define an automorphism $\theta$ of the vector space $\bM(V,W)$ by the natural formula
\begin{equation}
\begin{split}
&\quad\theta\left((B_h,\Gamma_i,\Delta_i)\right)=\\
&(\varphi_{a^{-1}(t(h))}B_{a^{-1}(h)}\varphi_{a^{-1}(s(h))}^{-1},\varphi_{a^{-1}(i)}\Gamma_{a^{-1}(i)}\sigma_{a^{-1}(i)}^{-1},\sigma_{a^{-1}(i)}\Delta_{a^{-1}(i)}\varphi_{a^{-1}(i)}^{-1}).
\end{split}
\end{equation}
We define automorphisms $\theta$ of $G_V$ and of $G_W$, respectively, by the formulas
\begin{equation}
\begin{split}
\theta((g_i))&=(\varphi_{a^{-1}(i)}g_{a^{-1}(i)}\varphi_{a^{-1}(i)}^{-1}),\\
\theta((\alpha_i))&=(\sigma_{a^{-1}(i)}\alpha_{a^{-1}(i)}\sigma_{a^{-1}(i)}^{-1}).
\end{split}
\end{equation}
From~\eqref{eqn:varphi} and \eqref{eqn:sigma} it is clear that $\theta^{\bn}$ is the identity for all of these meanings of $\theta$. 

It is trivial to check that $\theta(h\cdot x)=\theta(h)\cdot\theta(x)$ for any $h$ in $G_V$ or $G_W$ and any $x$ in $\bM(V,W)$, and that $\theta$ preserves the subvarieties $\Lambda(V,W)$ and $\Lambda(V,W)^s$ of $\bM(V,W)$. Moreover, since the automorphism $\theta$ of $G_V$ preserves the character $(g_i)\mapsto\prod_{i\in I}\det(g_i)$, $\theta$ induces a grading-preserving automorphism of $A(V,W)$. 

We therefore obtain our desired diagram automorphism of $\fM(V,W)$, which we still denote $\theta$. On the level of geometric points, $\theta$ sends the $G_V$-orbit of $x\in\Lambda(V,W)^s$ to the $G_V$-orbit of $\theta(x)$. Similarly, we have an automorphism $\theta$ of $\fM_0(V,W)$, and the morphism $\pi$ is compatible with these automorphisms. Hence the subvarieties $\fL(V,W)$ of $\fM(V,W)$ and $\fM_1(V,W)$ of $\fM_0(V,W)$ are $\theta$-stable.

\begin{ex} \label{ex:typea-inv}
Continue with the notation of Example~\ref{ex:typea}, where the quiver is of type A$_{2n-1}$. Let $a$ be the involution of the quiver that interchanges $i$ and $2n-i$ and take $\bn=2$, as in Example~\ref{ex:typea-typed}, so that $e_i$ equals $1$ for $i\neq n$ and $2$ for $i=n$.

To define the diagram involution $\theta$ in this case, we need isomorphisms $\varphi_i:V_i\simto V_{2n-i}$ and $\sigma_i:W_i\simto W_{2n-i}$ for all $i\in I$. In particular, we require $v_i=v_{2n-i}$ and $w_i=w_{2n-i}$ for all $i$. The conditions~\eqref{eqn:varphi} and~\eqref{eqn:sigma} become that $\varphi_{2n-i}=\varphi_i^{-1}$ and $\sigma_{2n-i}=\sigma_i^{-1}$ for all $i\neq n$, $\varphi_{n}$ is the identity, and $\sigma_{n}$ is an involution. To simplify the notation, we may as well assume that $V_i=V_{2n-i}$ and $W_i=W_{2n-i}$ for every $i$, that $\varphi_i$ is the identity for all $i$, and that $\sigma_i$ is the identity for all $i\neq n$. However, the choice of the involution $\sigma_n:W_n\to W_n$ remains and will play a key role. We write its $(+1)$-eigenspace and $(-1)$-eigenspace as $W_+$ and $W_-$ (abbreviating the notation $W_{n,+1}$ and $W_{n,-1}$ introduced in the general setting), and their dimensions as $w_+$ and $w_-$. Thus $W_n=W_+\oplus W_-$, $w_n=w_++w_-$. 

With these conventions, the involution $\theta$ of $\bM(V,W)$ takes the configuration of maps illustrated in Example~\ref{ex:typea} to the following `$\sigma_n$-modified reflection' of itself:
\begin{equation*}
\vcenter{
\xymatrix@R=15pt{
W_1\ar@/_/[dd]_-{\Gamma_{2n-1}}&\,\cdots\,&W_{n-1}\ar@/_/[dd]_-{\Gamma_{n+1}}&W_n\ar@/_/[dd]_(.3){\Gamma_n\sigma_n}&W_{n+1}\ar@/_/[dd]_-{\Gamma_{n-1}}&\cdots&W_{2n-1}\ar@/_/[dd]_-{\Gamma_{1}}
\\
\\
V_{1}\ar@/^/[r]^(.6){B_{2n-2,2n-1}}\ar@/_/[uu]_-{\Delta_{2n-1}}&\,\cdots\,\ar@/^/[l]^(.4){B_{2n-1,2n-2}}\ar@/^/[r]^-{B_{n+1,n+2}}&V_{n-1}\ar@/^/[l]^-{B_{n+2,n+1}}\ar@/^/[r]^-{B_{n,n+1}}\ar@/_/[uu]_-{\Delta_{n+1}}&V_{n}\ar@/^/[l]^-{B_{n+1,n}}\ar@/^/[r]^-{B_{n-1,n}}\ar@/_/[uu]_(.7){\sigma_n\Delta_n}&V_{n+1}\ar@/^/[l]^-{B_{n,n-1}}\ar@/^/[r]^-{B_{n-2,n-1}}\ar@/_/[uu]_-{\Delta_{n-1}}&\cdots\ar@/^/[l]^-{B_{n-1,n-2}}\ar@/^/[r]^-{B_{1,2}}&V_{2n-1}\ar@/^/[l]^-{B_{2,1}}\ar@/_/[uu]_-{\Delta_{1}}
}
}
\end{equation*}
The fact that $\Lambda(V,W)$ is $\theta$-stable is evident from the explicit equations~\eqref{eqn:quadratic}.
\end{ex}

\subsection{Motivation}
\label{subsect:motivation}

Our aim now is to describe the fixed-point subvariety $\fM(V,W)^\theta$. To motivate the appearance of the split-quotient quiver $(\tI,\tH,\tOmega)$ in the statement (Theorem~\ref{thm:fixed-points}), we make some preliminary observations. 

The geometric points of $\fM(V,W)^\theta$ are in bijection with the $\theta$-stable $G_V$-orbits in $\Lambda(V,W)^s$. For $x=(B_h,\Gamma_i,\Delta_i)\in\Lambda(V,W)^s$, the $G_V$-orbit of $x$ is $\theta$-stable if and only if there is some $(g_i^x)\in G_V$ such that $(g_i^x)\cdot\theta(x)=x$, or in other words
\begin{equation} \label{eqn:fixed}
\begin{split}
B_{a(h)}&=g_{a(t(h))}^x\varphi_{t(h)}B_h(g_{a(s(h))}^x\varphi_{s(h)})^{-1},\\
\Gamma_{a(i)}&=g_{a(i)}^x\varphi_i\Gamma_i\sigma_i^{-1},\\
\Delta_{a(i)}&=\sigma_i\Delta_i(g_{a(i)}^x\varphi_i)^{-1},
\end{split}
\end{equation}
for all $h\in H$ and $i\in I$. If this is the case, then by Proposition~\ref{prop:triv-stab} the element $(g_i^x)\in G_V$ is uniquely determined by $x$.

Suppose~\eqref{eqn:fixed} holds. For any $i\in I$, define 
\begin{equation}
\tau_i^x=g_i^x\varphi_{a^{d_i-1}(i)}\cdots g_{a^2(i)}^x\varphi_{a(i)}g_{a(i)}^x\varphi_i\in GL(V_i).
\end{equation}
Then \eqref{eqn:fixed} implies that
\begin{equation} \label{eqn:intertwine}
\begin{split}
B_h&=(\tau_{t(h)}^x)^{e_{t(h)}/e_h}B_h(\tau_{s(h)}^x)^{-e_{s(h)}/e_h},\\
\Gamma_i&=\tau_i^x\Gamma_i(\sigma_{a^{d_i-1}(i)}\cdots\sigma_{a(i)}\sigma_i)^{-1},\\
\Delta_i&=(\sigma_{a^{d_i-1}(i)}\cdots\sigma_{a(i)}\sigma_i)\Delta_i(\tau_i^x)^{-1}.
\end{split}
\end{equation}
In view of \eqref{eqn:sigma}, \eqref{eqn:intertwine} implies that $((\tau_i^x)^{e_i})\in G_V$ fixes $x$, so $(\tau_i^x)^{e_i}$ is the identity by Proposition~\ref{prop:triv-stab}. 
For any $\zeta\in\mu_{e_i}$, let $V_{i,\zeta}^{x}$ denote the $\zeta$-eigenspace of $\tau_i^x$. Thus $V_i=\bigoplus_{\zeta\in\mu_{e_i}}V_{i,\zeta}^x$. 
Moreover, \eqref{eqn:intertwine} implies that $\Gamma_i(W_{i,\zeta})\subseteq V_{i,\zeta}^x$ and $\Delta_i(V_{i,\zeta}^x)\subseteq W_{i,\zeta}$, while
\begin{equation} \label{eqn:restriction}
B_h(V_{s(h),\zeta}^x)\subseteq\bigoplus_{\substack{\zeta'\in\mu_{e_{t(h)}}\\\zeta\overset{h}{\longrightarrow}\zeta'}}V_{t(h),\zeta'}^x,
\end{equation}
where the notation $\zeta\overset{h}{\longrightarrow}\zeta'$ stands for the following equation (cf.~\eqref{eqn:zeta-first}):
\begin{equation} \label{eqn:zeta}
\zeta^{e_{s(h)}/e_h}=(\zeta')^{e_{t(h)}/e_h}.
\end{equation}

Thus, from our original configuration of linear maps $x=(B_h,\Gamma_i,\Delta_i)$, a new one has emerged where the vector spaces $V_i$ and $W_i$ have been split into direct summands $V_{i,\zeta}^x$ and $W_{i,\zeta}$, the linear maps induced by $\Gamma_i$ and $\Delta_i$ go between the corresponding summands, and the linear maps induced by $B_h$ go from a summand $V_{s(h),\zeta}^x$ to a summand $V_{t(h),\zeta'}^x$ where $\zeta\overset{h}{\longrightarrow}\zeta'$. However, this new configuration of induced linear maps contains redundant information, inasmuch as specifying one map is enough to determine all the maps `in its $\langle a\rangle$-orbit', by~\eqref{eqn:fixed}. Quotienting out the $\langle a\rangle$-action, we exactly get a configuration of linear maps for the split-quotient quiver $(\tI,\tH,\tOmega)$ of $(I,H,\Omega,a,\bn)$.

\subsection{Description of the fixed-point subvariety}
\label{subsect:description}

We can now state our theorem describing the fixed-point subvariety $\fM(V,W)^\theta$ in terms of quiver varieties for the split-quotient quiver $(\tI,\tH,\tOmega)$. 

Note that on $\fM(V,W)^\theta$ we have an action of the fixed-point subgroup $G_W^\theta$ of $G_W$, consisting of those $(\alpha_i)\in G_W$ such that $\alpha_{a(i)}=\sigma_i\alpha_i\sigma_i^{-1}$ for all $i$. Since this condition implies that $\alpha_i$ commutes with $\sigma_{a^{d_i-1}(i)}\cdots\sigma_{a(i)}\sigma_i$, $G_W^\theta$ preserves each subspace $W_{i,\zeta}$. Let $\tW$ be the $\tI$-graded vector space $\bigoplus_{(i,\zeta)\in\tI}W_{i,\zeta}$, and accordingly define $G_{\tW}$ to be $\prod_{(i,\zeta)\in\tI}GL(W_{i,\zeta})$. Then we have a group isomorphism
\begin{equation} \label{eqn:gp-isom}
\rho:G_W^\theta\simto G_{\tW}:(\alpha_i)\mapsto(\alpha_i|_{W_{i,\zeta}}).
\end{equation} 

Let $\cD(\bv)$ denote the set of $\tI$-tuples $\tbv=(v_{i,\zeta})$ of nonnegative integers satisfying $\sum_{\zeta\in\mu_{e_i}}v_{i,\zeta}=v_i$ for all $i\in\hI$. For every $\tbv=(v_{i,\zeta})\in\cD(V)$, we fix, for each $i\in\hI$, a direct sum decomposition $V_i=\bigoplus_{\zeta\in\mu_{e_i}}V_{i,\zeta}^{\tbv}$ satisfying $\dim V_{i,\zeta}^{\tbv}=v_{i,\zeta}$. We then let $\tV^{\tbv}$ be the $\tI$-graded vector space $\bigoplus_{(i,\zeta)\in\tI}V_{i,\zeta}^{\tbv}$.

\begin{thm} \label{thm:fixed-points}
Let $\fM(V,W)^\theta$ be the fixed-point subvariety of the diagram automorphism $\theta$ of the quiver variety $\fM(V,W)$ for the quiver $(I,H,\Omega)$, defined as above. Then there is a $G_{\tW}$-equivariant variety isomorphism
\begin{equation}
\Psi:\coprod_{\tbv\in\cD(\bv)}\fM(\tV^{\tbv},\tW)\simto \fM(V,W)^\theta.
\end{equation} 
Here the domain is the disconnected union of the quiver varieties $\fM(\tV^{\tbv},\tW)$ for the quiver $(\tI,\tH,\tOmega)$, and $G_{\tW}$ acts on $\fM(V,W)^\theta$ via the isomorphism $\rho$ of \eqref{eqn:gp-isom}. 
\end{thm}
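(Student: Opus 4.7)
The plan is to construct $\Psi$ explicitly as an algebraic morphism, exhibit an algebraic inverse following the analysis of \S\ref{subsect:motivation}, and verify that these are mutually inverse $G_{\tW}$-equivariant morphisms. The inverse is essentially dictated by the motivation subsection: given a $\theta$-stable $G_V$-orbit of $x \in \Lambda(V,W)^s$, the unique $(g_i^x) \in G_V$ and the resulting operators $\tau_i^x$ (of order dividing $e_i$) decompose each $V_i$ into $\mu_{e_i}$-eigenspaces $V_{i,\zeta}^x$; the dimensions $v_{i,\zeta}^x$ for $i \in \hI$ form a tuple $\tbv^x \in \cD(\bv)$, and the restrictions of the structure maps to these eigenspaces define a representation $\tilde{x}$ of $(\tI,\tH,\tOmega)$ on $(\tV^{\tbv^x},\tW)$, after identifying each $V_{i,\zeta}^x$ with the fixed $V_{i,\zeta}^{\tbv^x}$ by any linear isomorphism (well-defined up to the $G_{\tV^{\tbv^x}}$-action).

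For $\Psi$ itself, starting from a representative $\tilde{x} = (\tilde{B}_{\th},\tilde{\Gamma}_{(i,\zeta)},\tilde{\Delta}_{(i,\zeta)}) \in \Lambda(\tV^{\tbv},\tW)^s$, I build $x$ as follows. For each $i\in\hI$, set $V_i = \tV^{\tbv}_i = \bigoplus_\zeta V_{i,\zeta}^{\tbv}$, declare $\tau_i \in GL(V_i)$ to act on $V_{i,\zeta}^{\tbv}$ as $\zeta\cdot\id$, and define $\Gamma_i,\Delta_i$ as the block-diagonal assembly of $\tilde{\Gamma}_{(i,\zeta)}$ and $\tilde{\Delta}_{(i,\zeta)}$. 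For each $\hh\in\hH$ pick a representative $h_0\in\hh$ with $s(h_0)$ in the orbit of $s(\hh)$, and define $B_{h_0}$ as the direct sum of the $\tilde{B}_{(\hh,\zeta,\zeta')}$, regarded as maps between the relevant eigenspaces; the defining condition $\zeta^{e_{s(\hh)}/e_{\hh}} = (\zeta')^{e_{t(\hh)}/e_{\hh}}$ is precisely what makes this sum transform correctly under $\tau$. Then extend the structure maps to all remaining $i\in I$ and $h\in H$ using the $\theta$-equivariance conditions~\eqref{eqn:fixed}, taking $g_i^x = \id$ at all but one ``closing'' vertex per $\langle a\rangle$-orbit, and choosing the closing $g_i^x$ so that the resulting $\tau_i^x$ agrees with $\tau_i$.

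Three verifications complete the argument. First, the ADHM equation~\eqref{eqn:adhm} at $i\in\hI$ decomposes under $\tau_i$ into components indexed by $\zeta\in\mu_{e_i}$, each of which is exactly the ADHM equation at $(i,\zeta)$ for the split-quotient quiver; the equations at the remaining $i\in I$ then hold automatically by the $\theta$-equivariance used in the extension. Second, stability of $\tilde{x}$ forces stability of $x$: the subspace of $V$ generated by $\img\Gamma$ under the $B_h$ is $\tau_i^x$-stable (by the uniqueness in Proposition~\ref{prop:triv-stab} applied to an appropriate sub-datum), hence decomposes into eigenspaces that constitute an $\tilde{x}$-invariant sub-datum containing the images of the $\tilde{\Gamma}_{(i,\zeta)}$, and stability of $\tilde{x}$ forces this sub-datum to fill all of $\tV^{\tbv}$. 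Third, the $G_V$-orbit of $x$ is $\theta$-stable by construction, with intertwiner the prescribed $(g_i^x)$.

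Finally, $\Psi$ and the reverse construction are visibly mutually inverse on geometric points; both are built from direct sums, restrictions, and transport by fixed isomorphisms, hence are morphisms of varieties, and since $\fM(V,W)$ is smooth (Proposition~\ref{prop:geom-pts}) this upgrades the resulting bijection to an isomorphism. The $G_{\tW}$-equivariance is immediate, because $G_W^\theta$ acts on each $W_i$ preserving the $\sigma$-eigenspace decomposition used to define $\tW$, matching the $G_{\tW}$-action via $\rho$. The main obstacle is the root-of-unity book-keeping: one must check that each summand $\tilde{B}_{(\hh,\zeta,\zeta')}$ really does land in the $\zeta'$-eigenspace of $\tau_{t(h_0)}^x$, and that the extension of $B_h$ along the $\langle a\rangle$-orbit of $h$ closes up consistently after $d_h$ steps. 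Both reductions trace back to the compatibility of the constraint $\zeta^{e_{s(h)}/e_h} = (\zeta')^{e_{t(h)}/e_h}$ with the normalizations~\eqref{eqn:varphi} and~\eqref{eqn:sigma}.
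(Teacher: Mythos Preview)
Your forward construction of $\Psi$ and the idea of the pointwise inverse match the paper's approach, but there are two real gaps.

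First, you verify only one direction of the stability correspondence: that $\tilde{x}$ stable implies $x$ stable. The converse, needed for your inverse to land in $\fM(\tV^{\tbv},\tW)$ at all, is not obvious: the $\tilde{B}_{\th}$ are merely eigenspace components of the $B_h$, and knowing that the full $B_h$ generate $V$ from $\bigoplus_i\img\Gamma_i$ does not immediately give that these components generate each $V_{i,\zeta}$ from $\img\tilde{\Gamma}_{(i,\zeta)}$. The paper's argument for this direction (Lemma~\ref{lem:stability}) is indirect: one pulls back along $\psi_{\tbv}$ a semi-invariant $f\in A(V,W)_n$ witnessing stability of $x$; the pullback transforms under $G_{\tV^{\tbv}}$ by $\prod_{(i,\zeta)}\det(h_{i,\zeta})^{d_i n}$, and the proof of Proposition~\ref{prop:geom-pts}(1) goes through unchanged with such positive-power characters. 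A related omission: your $\Psi$ drops the scalar factors $e_i$ on $\Gamma_i$ and $e_{\hh}$ on $B_h$ that the paper inserts in~\eqref{eqn:gamma-delta-special} and~\eqref{eqn:b-special}. Without them the ADHM verification fails, because for fixed $\hh$ there are $e_i/e_{\hh}$ arrows $h\in\Omega$ with $s(h)=i$ lying over $\hh$, each contributing an identical term to the left side of~\eqref{eqn:adhm}; the scalars are precisely what makes these multiplicities match the single factor $e_i$ on the right.

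Second, your claim that the reverse construction is an algebraic morphism is unjustified. The identification of the $x$-dependent eigenspace $V_{i,\zeta}^x$ with the fixed $V_{i,\zeta}^{\tbv}$ is a choice that does not come for free; at best one would have to produce local trivializations of the eigenspace sub-bundles and check that the resulting local maps to $\fM(\tV^{\tbv},\tW)$ glue, and you do none of this, instead asserting the construction uses ``fixed isomorphisms'', which it does not. The paper never attempts an algebraic inverse: it shows $\Psi$ is a morphism, checks injectivity and surjectivity on geometric points separately (Lemmas~\ref{lem:injective} and~\ref{lem:surjective}), observes that $\tbv^x$ is locally constant so the images $\Psi_{\tbv}(\fM(\tV^{\tbv},\tW))$ are open and closed, and then concludes each $\Psi_{\tbv}$ is an isomorphism onto its image via Zariski's main theorem, using that each connected component of $\fM(V,W)^\theta$ is smooth (as a component of the fixed locus of a finite-order automorphism of the smooth variety $\fM(V,W)$). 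Your closing sentence about smoothness is a gesture toward this route; if that is what you intend, then the preceding claim about an algebraic inverse is both unjustified and unnecessary.
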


Note that if $\fM(V,W)^\theta$ is empty, Theorem~\ref{thm:fixed-points} implies that every $\fM(\tV^{\tbv},\tW)$ is empty also. If $\fM(V,W)^\theta$ is nonempty, it is certainly possible that some of the varieties $\fM(\tV^{\tbv},\tW)$ are empty; in view of Proposition~\ref{prop:connected}, Theorem~\ref{thm:fixed-points} implies that the nonempty varieties $\fM(\tV^{\tbv},\tW)$ are respectively isomorphic to the connected components of $\fM(V,W)^\theta$.

Without specifying the graded vector spaces, the isomorphism of Theorem~\ref{thm:fixed-points} could be written
\begin{equation} \label{eqn:fixed-points-data}
\fM(\bv,\bw)^\theta\cong\coprod_{\tbv\in\cD(\bv)}\fM(\tbv,\tbw)
\end{equation} 
where $\tbw=(w_{i,\zeta})$ denotes the graded dimension of $\tW$. It is clear that, as $(\bv,\bw)$ varies, $(\tbv,\tbw)$ runs over all possible ordered pairs of $\tI$-tuples, so we conclude:

\begin{cor} \label{cor:exhaustion}
Every Nakajima quiver variety for the quiver $(\tI,\tH,\tOmega)$ is isomorphic to a connected component of the fixed-point subvariety of a diagram automorphism of a Nakajima quiver variety for the quiver $(I,H,\Omega)$.
\end{cor}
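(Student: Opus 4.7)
The plan is to realize $\Psi$ explicitly at the level of the prequotient varieties $\Lambda$, following the observations of \S\ref{subsect:motivation}. For each $\tbv \in \cD(\bv)$, I would first construct a morphism $\psi_{\tbv}: \Lambda(\tV^{\tbv}, \tW)^s \to \Lambda(V,W)^s$ whose image lies in the locus of $x$ satisfying $(g_i^x) \cdot \theta(x) = x$ for a prescribed $(g_i^x) \in G_V$ depending only on $\tbv$, then pass to Nakajima quotients to obtain $\Psi_{\tbv}: \fM(\tV^{\tbv}, \tW) \to \fM(V,W)^\theta$, and finally argue that $\coprod_{\tbv} \Psi_{\tbv}$ is bijective on geometric points and hence an isomorphism of varieties by smoothness of both sides (the right-hand side being smooth as the scheme-theoretic fixed locus of a finite-order automorphism of a smooth complex variety).

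The key step is the construction of $\psi_{\tbv}$. Given $\tbv$, use~\eqref{eqn:varphi} to extend the prescribed decompositions $V_i = \bigoplus_\zeta V_{i,\zeta}^{\tbv}$ from $i \in \hI$ to all $i \in I$ via $\varphi$-transport along each $\langle a \rangle$-orbit, and define $(g_i^{\tbv}) \in G_V$ by letting $g_i^{\tbv}$ act as multiplication by $\zeta$ on $V_{i,\zeta}^{\tbv}$ when $i \in \hI$ and as the identity for $i \notin \hI$. One verifies from~\eqref{eqn:varphi} that the associated operator $\tau_i^{\tbv}$ also acts as $\zeta$ on $V_{i,\zeta}^{\tbv}$ for every $i \in I$, matching the eigenspace picture in~\eqref{eqn:intertwine}. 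Now, given a stable tuple $\tx = (\tB_{\th}, \tGamma_{(i,\zeta)}, \tDelta_{(i,\zeta)})$ for the split-quotient quiver, define $\psi_{\tbv}(\tx) = (B_h, \Gamma_i, \Delta_i)$ as follows: assemble block-diagonal maps $\Gamma_i, \Delta_i$ from $\tGamma_{(i,\zeta)}, \tDelta_{(i,\zeta)}$ for each $i \in \hI$; assemble each $B_h$, for a chosen representative $h \in H$ of every $\hh \in \hH$, as a block map whose nonzero components $V_{s(h),\zeta}^{\tbv} \to V_{t(h),\zeta'}^{\tbv}$ are the $\tB_{(\hh,\zeta,\zeta')}$ permitted by~\eqref{eqn:zeta-first}; and finally propagate the data to the remaining orbit elements by imposing~\eqref{eqn:fixed} with $(g_i^x) = (g_i^{\tbv})$. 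Consistency after traversing each $\langle a \rangle$-orbit once is precisely~\eqref{eqn:intertwine}, which holds by construction of the initial blocks.

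Three verifications then finish the forward direction. First, the ADHM relation~\eqref{eqn:adhm} at $i \in \hI$ decomposes along the $\tI$-blocks into the split-quotient ADHM relations at the vertices $(i,\zeta)$; the relation at $i' \notin \hI$ then follows from~\eqref{eqn:fixed} together with the invariance of $\Lambda(V,W)$ under $\theta$ and $G_V$, which forces the ADHM equation at $i'$ to be equivalent, after conjugation by $\varphi$'s, to the ADHM equation at $a^{-1}(i')$. Second, stability transfers because the smallest $B_h$-invariant subspace of $V$ containing $\bigoplus_i \img(\Gamma_i)$ is $\langle a \rangle$-invariant and splits compatibly with $V = \bigoplus V_{i,\zeta}^{\tbv}$, reducing on the $\hI$-part to the analogous span for $\tx$. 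Third, $\psi_{\tbv}$ is equivariant with respect to the natural embedding $G_{\tV^{\tbv}} \hookrightarrow G_V$ (acting block-diagonally with respect to our reference decomposition), so the induced $\Psi_{\tbv}$ is well-defined on the GIT quotient and is $G_{\tW}$-equivariant via~\eqref{eqn:gp-isom}.

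Conversely, a geometric point $[x] \in \fM(V,W)^\theta$ represented by a stable $x$ determines a unique $(g_i^x) \in G_V$ with $(g_i^x) \cdot \theta(x) = x$ by Proposition~\ref{prop:triv-stab}, hence eigenspace decompositions $V_i = \bigoplus V_{i,\zeta}^x$ of the $\tau_i^x$ and a dimension vector $\tbv(x) \in \cD(\bv)$ that is locally constant on the fixed-point scheme. After acting on $x$ by a suitable element of $G_V$, we may arrange $V_{i,\zeta}^x = V_{i,\zeta}^{\tbv(x)}$ and $(g_i^x) = (g_i^{\tbv(x)})$; the block restrictions of $\Gamma_i, \Delta_i$ for $i \in \hI$ and of one chosen $B_h$ per $\hH$-orbit then yield a stable tuple $\tx$ for the split-quotient quiver, well-defined up to the residual $G_{\tV^{\tbv(x)}}$-action. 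The main obstacle, in my view, is the combinatorial bookkeeping required to keep all the pieces mutually consistent --- the transports $\varphi_i, \sigma_i$, the operators $\tau_i^x$, the eigenvalue constraint~\eqref{eqn:zeta}, the choice of orbit representatives, and their interaction with the stability and ADHM conditions --- but once set up carefully the two constructions are manifestly inverse on geometric points, which promotes to an isomorphism of varieties by the smoothness argument above.
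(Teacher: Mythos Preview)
You have written a sketch of the proof of Theorem~\ref{thm:fixed-points}, not of Corollary~\ref{cor:exhaustion}. In the paper, the corollary is deduced from Theorem~\ref{thm:fixed-points} in one line: given any pair of $\tI$-tuples $(\tbv,\tbw)$, one simply chooses $\bv,\bw$ by setting $v_i=\sum_{\zeta\in\mu_{e_i}}v_{i,\zeta}$ and $w_i=\sum_{\zeta\in\mu_{e_i}}w_{i,\zeta}$, and chooses the $\sigma_i$ so that $\dim W_{i,\zeta}=w_{i,\zeta}$. Then $\tbv\in\cD(\bv)$ and the given $\tbw$ is the graded dimension of the resulting $\tW$, so $\fM(\tbv,\tbw)$ appears on the right-hand side of~\eqref{eqn:fixed-points-data}. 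Your write-up never addresses this surjectivity-of-parameters step, which is the actual content of the corollary once Theorem~\ref{thm:fixed-points} is in hand.

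As a sketch of Theorem~\ref{thm:fixed-points} itself, your outline follows the same route as the paper's \S\ref{subsect:proof}: build $\psi_{\tbv}$ block-by-block at orbit representatives and propagate via~\eqref{eqn:fixed}, check ADHM, stability, equivariance, then bijectivity on points plus smoothness of the target to conclude. One substantive omission: the paper inserts scalar factors $e_i$ in the definition of $\Gamma_i$ (equation~\eqref{eqn:gamma-delta-special}) and $e_h$ in the definition of $B_h$ (equation~\eqref{eqn:b-special}). These are not cosmetic: in the verification of~\eqref{eqn:adhm}, the sum $\sum_{h\in\Omega,\,s(h)=i}B_{\overline h}B_h$ restricted to a $\zeta$-block produces each term $\tB_{\overline{(\hh,\zeta,\zeta')}}\tB_{(\hh,\zeta,\zeta')}$ with multiplicity $e_i/e_{\hh}$ (coming from the $e_i/e_{\hh}$ distinct $\langle a\rangle$-translates of a representative arrow with source in the orbit of $i$), and the scalars are needed to balance this against the split-quotient ADHM equation at $(i,\zeta)$; see the computation leading to~\eqref{eqn:desired}. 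With your unscaled block construction the ADHM check fails whenever some $e_i>1$ or $e_i/e_{\hh}>1$, which is exactly the interesting case.
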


See \S\ref{subsect:lagrangian} for the behaviour of the affine varieties $\fM_0$ and $\fM_1$ and the projective varieties $\fL$ under our isomorphism.

\begin{ex} \label{ex:silly}
Suppose $a$ is the identity. Then the automorphism $\theta$ of $\fM(V,W)$ is just the action of the element $(\sigma_i)\in G_W$, whose $\bn$th power is the identity. In view of Example~\ref{ex:redundant}, the quiver varieties $\fM(\tV^{\tbv},\tW)$ are $\bn$-fold products of quiver varieties for $(I,H,\Omega)$. This case of Theorem~\ref{thm:fixed-points} can be proved in the same way as~\cite[Lemma 3.2]{nak3}. Similar arguments reduce Theorem~\ref{thm:fixed-points} to the case where $\bn$ is the order of $a$, but there seems to be no advantage in this reduction.
\end{ex}

\begin{ex} \label{ex:typea-typed-rule}
In the context of Example~\ref{ex:typea-typed}, Theorem~\ref{thm:fixed-points} (or rather~\eqref{eqn:fixed-points-data}) becomes Theorem~\ref{thm:fixed-points-AtoD}, and Corollary~\ref{cor:exhaustion} becomes the statement that every quiver variety of type D$_{n+1}$ arises as a connected component of the fixed-point subvariety of a diagram involution of a quiver variety of type A$_{2n-1}$. To give a preview of the proof of Theorem~\ref{thm:fixed-points} in this special case, our isomorphism $\Psi$ will send the orbit of the type-$\D_{n+1}$ configuration of linear maps depicted in Example~\ref{ex:typed} to the orbit of the following type-$\A_{2n-1}$ configuration:
\begin{equation*}
\vcenter{
\xymatrix@R=15pt@C=50pt{
\cdots&W_{n-1}\ar@/_/[dd]_(.4){\Gamma_{n-1}}&W_+\oplus W_-\ar@/_/[dd]_(.3){\left(\begin{smallmatrix}2\Gamma_+&0\\0&2\Gamma_-\end{smallmatrix}\right)}&W_{n-1}\ar@/_/[dd]_(.4){\Gamma_{n-1}}&\cdots
\\
\\
\cdots\ar@/^/[r]^-{B_{n-1,n-2}}&V_{n-1}\ar@/^/[l]^-{B_{n-2,n-1}}\ar@/^/[r]^-{\left(\begin{smallmatrix}B_{+,n-1}\\B_{-,n-1}\end{smallmatrix}\right)}\ar@/_/[uu]_(.6){\Delta_{n-1}}&V_{+}\oplus V_-\ar@/^/[l]^-{(B_{n-1,+}\ B_{n-1,-})}\ar@/^/[r]^-{(B_{n-1,+}\ -B_{n-1,-})}\ar@/_/[uu]_(.7){\left(\begin{smallmatrix}\Delta_+&0\\0&\Delta_-\end{smallmatrix}\right)}&V_{n-1}\ar@/^/[l]^-{\left(\begin{smallmatrix}B_{+,n-1}\\-B_{-,n-1}\end{smallmatrix}\right)}\ar@/^/[r]^-{B_{n-2,n-1}}\ar@/_/[uu]_(.6){\Delta_{n-1}}&\cdots\ar@/^/[l]^-{B_{n-1,n-2}}
}
}
\end{equation*}
The reader may find it helpful to verify directly that the equations ~\eqref{eqn:d-quadratic} imply that this configuration satisfies~\eqref{eqn:quadratic}, and to keep this example in mind throughout the following general proof.
\end{ex}
 
\subsection{Proof of Theorem~\ref{thm:fixed-points}}
\label{subsect:proof}

First, fix some $\tbv\in\cD(\bv)$. We will give an explicit definition of a linear map $\psi_{\tbv}:\bM(\tV^{\tbv},\tW)\to\bM(V,W)$ and show that it induces a morphism $\Psi_{\tbv}:\fM(\tV^{\tbv},\tW)\to\fM(V,W)^\theta$. The arguments in \S\ref{subsect:motivation} motivate the definition of $\psi_{\tbv}$, bearing in mind that here we want to go in the reverse direction, from a configuration of maps for the quiver $(\tI,\tH,\tOmega)$ to one for the quiver $(I,H,\Omega)$.

Recall that $\hI$ denotes a fixed set of representatives for the $\langle a\rangle$-orbits in $I$, and that for each $i\in\hI$ we have fixed a direct sum decomposition $V_i=\bigoplus_{\zeta\in\mu_{e_i}}V_{i,\zeta}^{\tbv}$ such that $\dim V_{i,\zeta}^{\tbv}=v_{i,\zeta}$. Because of the assumption~\eqref{eqn:varphi}, we can uniquely extend this to a definition of a direct sum decomposition $V_i=\bigoplus_{\zeta\in\mu_{e_i}}V_{i,\zeta}^{\tbv}$ for every $i\in I$ in such a way that $\varphi_i(V_{i,\zeta}^{\tbv})=V_{a(i),\zeta}^{\tbv}$.

Now define an element $(g_i^{\tbv})\in G_V$ by the following rule: for $i\in\hI$, $g_i^{\tbv}$ acts on the subspace $V_{i,\zeta}^{\tbv}$ of $V_i$ as scalar multiplication by $\zeta$, and for $i\notin\hI$, $g_i^{\tbv}$ is the identity. For any $i\in I$, define
\begin{equation}
\tau_i^{\tbv}=g_i^{\tbv}\varphi_{a^{d_i-1}(i)}\cdots g_{a^2(i)}^{\tbv}\varphi_{a(i)}g_{a(i)}^{\tbv}\varphi_i\in GL(V_i).
\end{equation}
By construction, $V_{i,\zeta}^{\tbv}$ is the $\zeta$-eigenspace of $\tau_i^{\tbv}$. In particular, $(\tau_i^{\tbv})^{e_i}$ is the identity.

If $(h_{i,\zeta})\in G_{\tV^{\tbv}}$, then there is a unique element $(h_i)\in G_V$ such that
\begin{equation}
\begin{split}
h_{a(i)}&=g_{a(i)}^{\tbv}\varphi_i h_i(g_{a(i)}^{\tbv}\varphi_i)^{-1}\ \text{ for all $i\in I$, and}\\
h_i&=\bigoplus_{\zeta\in\mu_{e_i}}h_{i,\zeta}\ \text{ for $i\in\hI$.}
\end{split}
\end{equation} 
(This definition is consistent because the subspaces $V_{i,\zeta}^{\tbv}$ are the eigenspaces of $\tau_i^{\tbv}$.) Clearly this defines a group homomorphism
\begin{equation} \label{eqn:rhov}
\rho_{\tbv}:G_{\tV^{\tbv}}\to G_V:(h_{i,\zeta})\mapsto (h_i).
\end{equation}

Recall that $\hH$ and $\hOmega$ denote the sets of $\langle a\rangle$-orbits in $H$ and $\Omega$ respectively. For any $\hh\in\hOmega$, we have defined $s(\hh)$ and $t(\hh)$ to be the unique elements of $\hI\cap\{s(h)\,|\,h\in\hh\}$ and $\hI\cap\{t(h)\,|\,h\in\hh\}$ respectively. Choose a specific $h_1\in\hh$ such that $s(h_1)=s(\hh)$, and let $f(h_1)$ be the unique element of $\{0,1,\cdots,d_{t(\hh)}-1\}$ such that $t(h_1)=a^{f(h_1)}(t(\hh))$. 
Let $\Omega_1$ be the set of all the representatives $h_1$ as $\hh$ runs over $\hOmega$.

Now we can introduce the linear map
\begin{equation}
\psi_{\tbv}:\bM(\tV^{\tbv},\tW)\to\bM(V,W):(\tB_{\th},\tGamma_{i,\zeta},\tDelta_{i,\zeta})\mapsto(B_h,\Gamma_i,\Delta_i)
\end{equation}
where $B_h,\Gamma_i,\Delta_i$ are defined in terms of $\tB_{\th},\tGamma_{i,\zeta},\tDelta_{i,\zeta}$ as follows. 

If $i\in\hI$, we set
\begin{equation} \label{eqn:gamma-delta-special}
\Gamma_i=e_i\left(\bigoplus_{\zeta\in\mu_{e_i}}\tGamma_{i,\zeta}\right),\
\Delta_i=\left(\bigoplus_{\zeta\in\mu_{e_i}}\tDelta_{i,\zeta}\right).
\end{equation}
Here, the direct sum notation means that $e_i^{-1}\Gamma_i$ and $\Delta_i$ are block-diagonal relative to the decompositions $V_i=\bigoplus V_{i,\zeta}^{\tbv}$ and $W_i=\bigoplus W_{i,\zeta}$, and the diagonal blocks are given by $\tGamma_{i,\zeta}$ or $\tDelta_{i,\zeta}$ respectively. 

We then extend these definitions to define $\Gamma_i$ and $\Delta_i$ for general $i\in I$, in the unique way that makes the following rules hold (cf.~\eqref{eqn:fixed}):
\begin{equation} \label{eqn:gamma-delta-general}
\begin{split}
\Gamma_{a(i)}&=g_{a(i)}^{\tbv}\varphi_i\Gamma_{i}\sigma_i^{-1},\\
\Delta_{a(i)}&=\sigma_i\Delta_i(g_{a(i)}^{\tbv}\varphi_i)^{-1}.
\end{split}
\end{equation}
This definition is consistent because the following equations (cf.~\eqref{eqn:intertwine}) hold for $i\in\hI$ by the above block-diagonality:
\begin{equation} \label{eqn:gamma-delta-intertwine}
\begin{split}
\Gamma_i&=\tau_i^{\tbv}\Gamma_i(\sigma_{a^{d_i-1}(i)}\cdots\sigma_{a(i)}\sigma_i)^{-1},\\
\Delta_i&=(\sigma_{a^{d_i-1}(i)}\cdots\sigma_{a(i)}\sigma_i)\Delta_i(\tau_i^{\tbv})^{-1}.
\end{split}
\end{equation}
Then~\eqref{eqn:gamma-delta-general} implies~\eqref{eqn:gamma-delta-intertwine} for general $i\in I$.

If $h\in\Omega_1$ and $\hh=\langle a\rangle\cdot h$, we set
\begin{equation} \label{eqn:b-special}
\begin{split}
B_h&=e_h (g_{a^{f(h)}(t(\hh))}^{\tbv}\varphi_{a^{f(h)-1}(t(\hh))}\cdots g_{a(t(\hh))}^{\tbv}\varphi_{t(\hh)})\left(\bigoplus_{\substack{\zeta\in\mu_{e_{s(\hh)}}\\\zeta'\in\mu_{e_{t(\hh)}}\\\zeta\overset{\hh}{\longrightarrow}\zeta'}} \tB_{(\hh,\zeta,\zeta')}\right),\\
B_{\overline{h}}&=\left(\bigoplus_{\substack{\zeta\in\mu_{e_{s(\hh)}}\\\zeta'\in\mu_{e_{t(\hh)}}\\\zeta\overset{\hh}{\longrightarrow}\zeta'}} \tB_{\overline{(\hh,\zeta,\zeta')}}\right)(g_{a^{f(h)}(t(\hh))}^{\tbv}\varphi_{a^{f(h)-1}(t(\hh))}\cdots g_{a(t(\hh))}^{\tbv}\varphi_{t(\hh)})^{-1}.
\end{split}
\end{equation}
Here, the direct sum notation in the definition of $B_h$, say, means that we consider the map $V_{s(\hh)}\to V_{t(\hh)}$ whose blocks relative to the decompositions of these vector spaces are given by $\tB_{(\hh,\zeta,\zeta')}$ for those $(\zeta,\zeta')$ such that $\zeta\overset{\hh}{\longrightarrow}\zeta'$, and zero for other $(\zeta,\zeta')$. Note that all the $g_i^{\tbv}$ in the expression $g_{a^{f(h)}(t(\hh))}^{\tbv}\varphi_{a^{f(h)-1}(t(\hh))}\cdots g_{a(t(\hh))}^{\tbv}\varphi_{t(\hh)}$ are equal to the identity; we write it this way to make later computations more transparent.

We then extend these definitions to define $B_h$ for general $h\in H$, in the unique way that makes the following rule hold (cf.~\eqref{eqn:fixed}):
\begin{equation} \label{eqn:b-general}
B_{a(h)}=g_{a(t(h))}^{\tbv}\varphi_{t(h)}B_h(g_{a(s(h))}^{\tbv}\varphi_{s(h)})^{-1}.
\end{equation}
This definition is consistent because the following equation (cf.~\eqref{eqn:intertwine}) holds for $h\in\Omega_1\cup\overline{\Omega_1}$ because of the block form of the definition~\eqref{eqn:b-special}:
\begin{equation} \label{eqn:b-intertwine} 
B_h=(\tau_{t(h)}^{\tbv})^{e_{t(h)}/e_h}B_h(\tau_{s(h)}^{\tbv})^{-e_{s(h)}/e_h}.
\end{equation}
Then~\eqref{eqn:b-general} implies~\eqref{eqn:b-intertwine} for general $h\in H$.

\begin{lem} \label{lem:invariance}
The linear map $\psi_{\tbv}:\bM(\tV^{\tbv},\tW)\to\bM(V,W)$ is $G_{\tV^{\tbv}}\times G_{\tW}$-equivariant, where $G_{\tV^{\tbv}}$ acts on $\bM(V,W)$ via the homomorphism $\rho_{\tbv}$ of~\eqref{eqn:rhov} and $G_{\tW}$ acts on $\bM(V,W)$ via the isomorphism $\rho$ of~\eqref{eqn:gp-isom}. 
\end{lem}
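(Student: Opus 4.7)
The plan is a direct coordinate-by-coordinate verification. Since both group actions on $\bM(V,W)$ act by pre- and post-composition at each index, equivariance reduces to separate identities for the $\Gamma_i$-, $\Delta_i$- and $B_h$-components. Moreover, the formulas~\eqref{eqn:gamma-delta-general} and~\eqref{eqn:b-general} propagate these quantities to all indices from the base cases ($i\in\hI$, $h\in\Omega_1\cup\overline{\Omega_1}$) using only the data $(g^{\tbv},\varphi,\sigma)$ --- precisely the data defining $\rho_{\tbv}$ in~\eqref{eqn:rhov} and $\rho$ in~\eqref{eqn:gp-isom}. Consequently it suffices to verify equivariance in the base cases and then check that the propagation rules are themselves compatible with the actions.

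For $G_{\tV^{\tbv}}$-equivariance, set $(h_i)=\rho_{\tbv}((h_{i,\zeta}))$. For $i\in\hI$, the definition of $\rho_{\tbv}$ makes $h_i=\bigoplus_\zeta h_{i,\zeta}$ block-diagonal with respect to $V_i=\bigoplus_\zeta V_{i,\zeta}^{\tbv}$, so~\eqref{eqn:gamma-delta-special} immediately yields $h_i\Gamma_i=e_i\bigoplus_\zeta h_{i,\zeta}\tGamma_{i,\zeta}$, matching the image of the $G_{\tV^{\tbv}}$-twisted tuple under $\psi_{\tbv}$; the case of $\Delta_i$ is symmetric. For $h\in\Omega_1$ with $\hh=\langle a\rangle\cdot h$, let $\Phi$ denote the prefix $g_{a^{f(h)}(t(\hh))}^{\tbv}\varphi_{a^{f(h)-1}(t(\hh))}\cdots g_{a(t(\hh))}^{\tbv}\varphi_{t(\hh)}$ appearing in~\eqref{eqn:b-special}. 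Iterating~\eqref{eqn:rhov} yields $h_{t(h)}=\Phi h_{t(\hh)}\Phi^{-1}$, while $h_{s(h)}=h_{s(\hh)}=\bigoplus_\zeta h_{s(\hh),\zeta}$. Conjugating the block-diagonal piece $\bigoplus\tB_{(\hh,\zeta,\zeta')}$ by the block-diagonal maps $h_{t(\hh)}$ and $h_{s(\hh)}$ and then multiplying by $e_h\Phi$ on the left then reproduces $h_{t(h)}B_h h_{s(h)}^{-1}$; the case $h\in\overline{\Omega_1}$ is analogous.

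For $G_{\tW}$-equivariance, let $(\alpha_i)=\rho^{-1}((\alpha_{i,\zeta}))\in G_W^\theta$, so $\alpha_i=\bigoplus_\zeta\alpha_{i,\zeta}$ for $i\in\hI$ and $\alpha_{a(i)}=\sigma_i\alpha_i\sigma_i^{-1}$ for all $i$. The $B_h$-coordinate involves no $W_i$-dependence, making that identity vacuous. For $i\in\hI$, the block-diagonality of $\alpha_i$ combined with~\eqref{eqn:gamma-delta-special} yields equivariance of $\Gamma_i$ and $\Delta_i$ immediately. Propagation to $a(i)$ via~\eqref{eqn:gamma-delta-general} then works because $\sigma_i\alpha_i^{-1}\sigma_i^{-1}=\alpha_{a(i)}^{-1}$, so that acting by $\alpha_{a(i)}$ on $\Gamma_{a(i)}$ (respectively $\Delta_{a(i)}$) simply reshuffles the $\sigma_i$-factors around the already-equivariant $\Gamma_i$ (respectively $\Delta_i$).

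The main obstacle I anticipate is the telescoping computation underlying $h_{t(h)}=\Phi h_{t(\hh)}\Phi^{-1}$: the defining relation~\eqref{eqn:rhov} must be iterated $f(h)$ times starting from $t(\hh)\in\hI$, collecting the products of $g^{\tbv}\varphi$ on each side so that they amalgamate precisely into the prefix $\Phi$ of~\eqref{eqn:b-special}. Given the deliberately parallel set-up of $\rho_{\tbv}$ and $\psi_{\tbv}$, this is really disciplined bookkeeping rather than a genuine obstruction; once it is settled, the remaining verifications are mechanical manipulations of block-matrix identities.
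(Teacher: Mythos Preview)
Your proposal is correct and follows the same route as the paper, which dispatches the lemma with a single sentence: ``This is immediate from the definitions.'' You have simply unpacked what ``immediate'' means --- verifying the base cases $i\in\hI$ and $h\in\Omega_1\cup\overline{\Omega_1}$ using block-diagonality, and then observing that the propagation rules~\eqref{eqn:gamma-delta-general},~\eqref{eqn:b-general} intertwine correctly with the defining relation of $\rho_{\tbv}$ and with $\alpha_{a(i)}=\sigma_i\alpha_i\sigma_i^{-1}$ --- so there is no substantive difference in approach.
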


\begin{proof}
This is immediate from the definitions.
\end{proof}

\begin{lem} \label{lem:adhm}
The linear map $\psi_{\tbv}:\bM(\tV^{\tbv},\tW)\to\bM(V,W)$ restricts to a morphism $\psi_{\tbv}:\Lambda(\tV^{\tbv},\tW)\to\Lambda(V,W)$.
\end{lem}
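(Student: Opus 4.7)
The goal is to show that if $(\tB_{\th},\tGamma_{i,\zeta},\tDelta_{i,\zeta})$ satisfies the ADHM equations~\eqref{eqn:adhm} for the split-quotient quiver, then its image $(B_h,\Gamma_i,\Delta_i)$ under $\psi_{\tbv}$ satisfies the corresponding equations for the original quiver. My plan is to verify~\eqref{eqn:adhm} vertex by vertex, reducing first to $i\in\hI$ via the built-in $\langle a\rangle$-equivariance, and then to a single eigenspace block of $\tau_i^{\tbv}$.

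First I would use the iterated forms of~\eqref{eqn:gamma-delta-general} and~\eqref{eqn:b-general} to conjugate the equation~\eqref{eqn:adhm} at $i$ into the equation at $a(i)$ by $g_{a(i)}^{\tbv}\varphi_i:V_i\simto V_{a(i)}$, so it suffices to verify~\eqref{eqn:adhm} at each $i\in\hI$. For such $i$, the right-hand side $\Gamma_i\Delta_i$ commutes with $\tau_i^{\tbv}$ by the block-diagonal formula~\eqref{eqn:gamma-delta-special}. For the left-hand side, iterating~\eqref{eqn:b-general} shows that for each orbit $\hh\in\hOmega$ with $s(\hh)=i$ the partial sum over the $d_h/d_i=e_i/e_h$ edges $h'\in\hh\cap\Omega$ with $s(h')=i$ can be rewritten as $\sum_{k=0}^{(e_i/e_h)-1}(\tau_i^{\tbv})^kB_{\overline{h}}B_h(\tau_i^{\tbv})^{-k}$ for a fixed representative $h\in\Omega_1\cap\hh$, and this averaged expression commutes with $\tau_i^{\tbv}$; the $t$-sums are handled symmetrically. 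Both sides of~\eqref{eqn:adhm} therefore preserve the decomposition $V_i=\bigoplus_{\zeta\in\mu_{e_i}}V_{i,\zeta}^{\tbv}$, and it suffices to match their restrictions to each $V_{i,\zeta}^{\tbv}$.

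On $V_{i,\zeta}^{\tbv}$ the right-hand side restricts to $e_i\tGamma_{i,\zeta}\tDelta_{i,\zeta}$ by~\eqref{eqn:gamma-delta-special}. Each summand of the orbit average on the left sends $V_{i,\zeta}^{\tbv}$ into $\bigoplus_\eta V_{i,\eta}^{\tbv}$ with $\eta$ ranging over $\zeta\cdot\mu_{e_i/e_h}$ (the condition $\eta^{e_i/e_h}=\zeta^{e_i/e_h}$ arises from composing two instances of~\eqref{eqn:zeta-first}), acting as $(\eta/\zeta)^k$ on the $\eta$-component; summing over $k\in\{0,\ldots,(e_i/e_h)-1\}$ yields $e_i/e_h$ on the $\eta=\zeta$ component and $0$ on the others, by root-of-unity orthogonality. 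Hence the orbit average on $V_{i,\zeta}^{\tbv}$ equals $e_i/e_h$ times the diagonal block of $B_{\overline{h}}B_h$, which by~\eqref{eqn:b-special}, after the $g^{\tbv}\varphi$-factors in $B_h$ and $B_{\overline{h}}$ cancel, equals
\[
e_h\!\!\sum_{\substack{\zeta'\in\mu_{e_{t(\hh)}}\\\zeta\overset{\hh}{\longrightarrow}\zeta'}}\!\!\tB_{\overline{(\hh,\zeta,\zeta')}}\tB_{(\hh,\zeta,\zeta')},
\]
the $e_h$ coming from the prefactor in~\eqref{eqn:b-special} for $B_h$ (while $B_{\overline{h}}$ carries none). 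Multiplying by $e_i/e_h$ and summing over orbits with $s(\hh)=i$, together with the symmetric treatment of orbits with $t(\hh)=i$, yields exactly $e_i$ times the left-hand side of the split-quotient ADHM equation at $(i,\zeta)$; this equals $e_i\tGamma_{i,\zeta}\tDelta_{i,\zeta}$ by the hypothesis on $(\tB_{\th},\tGamma_{i,\zeta},\tDelta_{i,\zeta})$, matching the restricted right-hand side.

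The main obstacle is the numerical bookkeeping: the factors $e_i$ in~\eqref{eqn:gamma-delta-special} and $e_h$ in~\eqref{eqn:b-special} are calibrated so that, together with the orbit count $e_i/e_h$ and the character-sum vanishing, a single overall factor of $e_i$ appears on both sides of~\eqref{eqn:adhm}. The choices of representatives $\Omega_1$ and the auxiliary integers $f(h)$ must be tracked through the block computations, but they drop out after the cancellation of the $g^{\tbv}\varphi$-factors.
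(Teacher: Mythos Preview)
Your proposal is correct and follows essentially the same approach as the paper's proof: reduce to $i\in\hI$ via the $\langle a\rangle$-equivariance~\eqref{eqn:gamma-delta-general},~\eqref{eqn:b-general}, observe that both sides of~\eqref{eqn:adhm} are block-diagonal for the $\tau_i^{\tbv}$-eigenspace decomposition, and compute each diagonal block using the orbit parametrization $h=a^{kd_i}(h_1)$ with $h_1\in\Omega_1$. The only cosmetic difference is that where you invoke root-of-unity orthogonality to kill the off-diagonal $\zeta\to\eta$ blocks after summing over $k$, the paper restricts to the $\zeta\to\zeta$ block first and simply observes that conjugation by $(\tau_i^{\tbv})^k$ multiplies that block by $\zeta^k\zeta^{-k}=1$; these are the same computation, and your remark that the $f(h)$-shifts cancel in $B_{\overline{h}}B_h$ and $B_hB_{\overline{h}}$ is exactly what the paper uses.
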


\begin{proof}
We assume that $(\tB_{\th},\tGamma_{i,\zeta},\tDelta_{i,\zeta})\in\Lambda(\tV^{\tbv},\tW)$. That is, the equation analogous to~\eqref{eqn:adhm} is satisfied at every vertex $(i,\zeta)\in\tI$. We must show that, with $(B_h,\Gamma_i,\Delta_i)$ defined as above, the equation~\eqref{eqn:adhm} itself is satisfied for all $i\in I$. 

We first observe that, for all $i\in I$ and $h\in\Omega$,
\begin{equation} \label{eqn:there-and-back}
\begin{split}
\Gamma_{a(i)}\Delta_{a(i)}&=g_{a(i)}^{\tbv}\varphi_i\Gamma_i\Delta_i(g_{a(i)}^{\tbv}\varphi_i)^{-1},\\
B_{\overline{a(h)}}B_{a(h)}&=g_{a(s(h))}^{\tbv}\varphi_{s(h)}B_{\overline{h}}B_h(g_{a(s(h))}^{\tbv}\varphi_{s(h)})^{-1},\\
B_{a(h)}B_{\overline{a(h)}}&=g_{a(t(h))}^{\tbv}\varphi_{t(h)}B_hB_{\overline{h}}(g_{a(t(h))}^{\tbv}\varphi_{t(h)})^{-1}.
\end{split}
\end{equation}
This follows from~\eqref{eqn:gamma-delta-general} and~\eqref{eqn:b-general}. Consequently, it suffices to prove~\eqref{eqn:adhm} for $i\in\hI$.

The right-hand side of~\eqref{eqn:adhm} for $i\in\hI$ is $\Gamma_i\Delta_i=e_i(\bigoplus\tGamma_{i,\zeta}\tDelta_{i,\zeta})$, by the definition~\eqref{eqn:gamma-delta-special}. Using~\eqref{eqn:there-and-back}, we see that each of the sums on the left-hand side of~\eqref{eqn:adhm} commutes with $\tau_i^{\tbv}$, and hence preserves the decomposition $V_i=\bigoplus V_{i,\zeta}^{\tbv}$. Therefore we need only consider the component of each term that maps $V_{i,\zeta}^{\tbv}$ to $V_{i,\zeta}^{\tbv}$, for some fixed $\zeta\in\mu_{e_i}$; we indicate this with the symbol $|_{\zeta}^{\zeta}$. Since we know the equation analogous to~\eqref{eqn:adhm} at the vertex $(i,\zeta)\in\tI$, it suffices to prove that
\begin{equation}\label{eqn:desired}
\begin{split}
\sum_{\substack{h\in\Omega\\s(h)=i}}(B_{\overline{h}}B_h)|_{\zeta}^{\zeta}
&=e_i\sum_{\substack{\hh\in\hOmega\\s(\hh)=i}}\sum_{\substack{\zeta'\in\mu_{e_{t(\hh)}}\\\zeta\overset{\hh}{\longrightarrow}\zeta'}}\tB_{\overline{(\hh,\zeta,\zeta')}}\tB_{(\hh,\zeta,\zeta')},\\
\sum_{\substack{h\in\Omega\\t(h)=i}}(B_hB_{\overline{h}})|_{\zeta}^{\zeta}&=e_i\sum_{\substack{\hh\in\hOmega\\t(\hh)=i}}\sum_{\substack{\zeta'\in\mu_{e_{s(\hh)}}\\\zeta'\overset{\hh}{\longrightarrow}\zeta}}\tB_{(\hh,\zeta',\zeta)}\tB_{\overline{(\hh,\zeta',\zeta)}}.
\end{split}
\end{equation}
Now an arbitrary $h\in\Omega$ such that $s(h)=i$ can be written uniquely as $a^{pd_i}(h_1)$ where $h_1\in\Omega_1$, $s(h_1)=i$, and $0\leq p<d_{h_1}/d_{i}=e_i/e_{h_1}$. Using~\eqref{eqn:there-and-back} and the definition~\eqref{eqn:b-special}, we obtain
\[
\begin{split}
\sum_{\substack{h\in\Omega\\s(h)=i}}(B_{\overline{h}}B_h)|_{\zeta}^{\zeta}
&=\sum_{\substack{h_1\in\Omega_1,0\leq p<e_i/e_{h_1}\\s(h_1)=i}}((\tau_i^{\tbv})^pB_{\overline{h_1}}B_{h_1}(\tau_i^{\tbv})^{-p})|_{\zeta}^{\zeta}\\
&=\sum_{\substack{h_1\in\Omega_1,0\leq p<e_i/e_{h_1}\\s(h_1)=i}}\zeta^{p}\zeta^{-p}\,(B_{\overline{h_1}}B_{h_1})|_{\zeta}^{\zeta}\\
&=\sum_{\substack{h_1\in\Omega_1\\s(h_1)=i}}(e_i/e_{h_1})(B_{\overline{h_1}}B_{h_1})|_{\zeta}^{\zeta}\\
&=e_i\sum_{\substack{\hh\in\hOmega\\s(\hh)=i}}\sum_{\substack{\zeta'\in\mu_{e_{t(\hh)}}\\\zeta\overset{\hh}{\longrightarrow}\zeta'}}\tB_{\overline{(\hh,\zeta,\zeta')}}\tB_{(\hh,\zeta,\zeta')}.
\end{split}
\]
The proof of the other equation in~\eqref{eqn:desired} is similar, using the fact that an arbitrary $h\in\Omega$ such that $t(h)=i$ can be written uniquely as $a^{qd_i-f(h_1)}(h_1)$ where $h_1\in\Omega_1$, $t(h_1)=a^{f(h_1)}(i)$, and $0\leq q<e_i/e_{h_1}$.
\end{proof}

\begin{lem} \label{lem:stability}
With $\psi_{\tbv}:\Lambda(\tV^{\tbv},\tW)\to\Lambda(V,W)$ defined as above, the preimage $\psi_{\tbv}^{-1}(\Lambda(V,W)^s)$ equals $\Lambda(\tV^{\tbv},\tW)^s$.
\end{lem}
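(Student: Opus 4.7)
My plan is to set up an ascent/descent correspondence between $\tB_{\th}$-stable subspaces of $\tV^{\tbv}$ containing $\bigoplus_{(i,\zeta)\in\tI}\img(\tGamma_{i,\zeta})$ and $B_h$-stable subspaces of $V$ containing $\bigoplus_{i\in I}\img(\Gamma_i)$, in order to translate the two stability conditions directly into one another. Fix $(\tB_{\th},\tGamma_{i,\zeta},\tDelta_{i,\zeta})\in\Lambda(\tV^{\tbv},\tW)$, set $(B_h,\Gamma_i,\Delta_i):=\psi_{\tbv}(\tB_{\th},\tGamma_{i,\zeta},\tDelta_{i,\zeta})$, and introduce the single-step transport operator $\Phi_i:=g^{\tbv}_{a(i)}\varphi_i:V_i\to V_{a(i)}$. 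The $d_i$-fold composition $\Phi_{a^{d_i-1}(i)}\cdots\Phi_{a(i)}\Phi_i$ around the $\langle a\rangle$-orbit of $i$ equals $\tau_i^{\tbv}$, and by construction $\Phi_i$ sends $V_{i,\zeta}^{\tbv}$ isomorphically onto $V_{a(i),\zeta}^{\tbv}$. The formulas~\eqref{eqn:gamma-delta-general} and~\eqref{eqn:b-general} rewrite as the conjugation identities $\Phi_i\Gamma_i=\Gamma_{a(i)}\sigma_i$ and $\Phi_{t(h)}B_h=B_{a(h)}\Phi_{s(h)}$.

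The core step will be to show that the minimal $B_h$-stable subspace $U_{\min}\subseteq V$ containing $\bigoplus_{i\in I}\img(\Gamma_i)$ is preserved by every $\Phi_i$. Since $U_{\min}$ is linearly spanned by generators of the form $B_{h_s}\cdots B_{h_1}\Gamma_{i_0}(w)$, the conjugation identities allow $\Phi_{t(h_s)}$ to be commuted leftward through each $B_{h_j}$ and then through $\Gamma_{i_0}$, producing $B_{a(h_s)}\cdots B_{a(h_1)}\Gamma_{a(i_0)}(\sigma_{i_0}(w))$, which is again a generator of $U_{\min}$. Consequently each $\tau_i^{\tbv}$ stabilizes $U_{\min}$; since $\tau_i^{\tbv}$ has finite order $e_i$ with eigenspace decomposition $V_i=\bigoplus_{\zeta}V_{i,\zeta}^{\tbv}$, this yields the direct-sum decomposition $U_{\min}=\bigoplus_{i\in I,\zeta\in\mu_{e_i}}(U_{\min}\cap V_{i,\zeta}^{\tbv})$ and, because $(\tau_i^{\tbv})^{-1}$ is a positive power of $\tau_i^{\tbv}$, closure of $U_{\min}$ under each $\Phi_i^{-1}$ as well.

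With this decomposition in hand, define the descent $\tU:=\bigoplus_{(i,\zeta)\in\tI}(U_{\min}\cap V_{i,\zeta}^{\tbv})\subseteq\tV^{\tbv}$. Formula~\eqref{eqn:gamma-delta-special} yields $\img(\tGamma_{i,\zeta})\subseteq\tU$ for every $(i,\zeta)\in\tI$. To check $\tB_{\th}$-stability, take $u\in\tU\cap V_{s(\hh),\xi}^{\tbv}$ and the chosen $h_1\in\Omega_1$ representing $\hh$; the expansion~\eqref{eqn:b-special} writes $B_{h_1}(u)$ as $e_{h_1}$ times an iterated $\Phi$-transport of $\sum_{\zeta''}\tB_{(\hh,\xi,\zeta'')}(u)$, where distinct $\zeta''$-summands lie in distinct eigenspaces of $\tau_{t(h_1)}^{\tbv}$. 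Projecting onto each such eigenspace (permitted by $\tau$-stability of $U_{\min}$) and inverting the iterated $\Phi$-transport (permitted by $\Phi$-closedness) extracts each $\tB_{(\hh,\xi,\zeta'')}(u)$ as an element of $U_{\min}\cap V_{t(\hh),\zeta''}^{\tbv}\subseteq\tU$. Hence $\tU\supseteq\tU_{\min}$. Dually, the subspace $\mathrm{Asc}(\tU_{\min})\subseteq V$ whose $(i,\zeta)$-component on each $\langle a\rangle$-orbit is built by $\Phi$-transporting $\tU_{\min}\cap V_{i,\zeta}^{\tbv}$ (for $i\in\hI$) contains all $\img(\Gamma_i)$ and is $B_h$-stable by~\eqref{eqn:gamma-delta-general},~\eqref{eqn:b-general},~\eqref{eqn:b-special}, hence contains $U_{\min}$. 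Unwinding these inclusions gives $U_{\min}=V$ if and only if $\tU_{\min}=\tV^{\tbv}$, which is the desired equivalence of stability.

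The main obstacle will be the $\Phi$-closedness of $U_{\min}$: without it, projection onto $\tau^{\tbv}$-eigenspaces would not preserve $U_{\min}$, and the individual images $\tB_{(\hh,\xi,\zeta'')}(u)$ could not be separated in the descent step. The generator-level calculation using the conjugation identities is the real substance of the proof, while the remaining verifications reduce to direct bookkeeping against the explicit formulas~\eqref{eqn:gamma-delta-special}--\eqref{eqn:b-intertwine}.
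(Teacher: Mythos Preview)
Your proof is correct. For the ``only if'' direction it is essentially the paper's argument: you show that the minimal $B_h$-generated subspace $U_{\min}$ is closed under the transport operators $\Phi_i=g_{a(i)}^{\tbv}\varphi_i$, and then reduce to the stability of the $(\tI,\tH)$-data via the explicit formulas~\eqref{eqn:gamma-delta-special} and~\eqref{eqn:b-special}.

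For the ``if'' direction your route is genuinely different from the paper's. The paper invokes the GIT characterization of stability (Proposition~\ref{prop:geom-pts}(1)): it pulls back a non-vanishing semi-invariant $f\in A(V,W)_n$ along $\psi_{\tbv}$, observes that $f'=f\circ\psi_{\tbv}$ is a semi-invariant for the modified character $(h_{i,\zeta})\mapsto\prod\det(h_{i,\zeta})^{d_i n}$, and then appeals to the meta-statement that the proof of the stability criterion goes through with any character in which each determinant appears to a positive power. You instead give a self-contained linear-algebra argument via the ascent/descent correspondence, avoiding GIT entirely. Your approach is longer but more transparent and does not require the reader to revisit the proof of Proposition~\ref{prop:geom-pts}(1); the paper's approach is slicker but relies on that external check.

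One small point: your final ``unwinding'' is a bit compressed. The two inclusions $\mathrm{Desc}(U_{\min})\supseteq\tU_{\min}$ and $\mathrm{Asc}(\tU_{\min})\supseteq U_{\min}$ do not by themselves yield the equivalence; what makes it work is that $\mathrm{Asc}$ and $\mathrm{Desc}$ are mutually inverse monotone bijections between $\Phi$-stable graded subspaces of $V$ and graded subspaces of $\tV^{\tbv}$ (an immediate consequence of the fact that the $\Phi$-transport identifies $V_j$ with $V_i$ for $j$ in the $\langle a\rangle$-orbit of $i\in\hI$, compatibly with the $\tau_i^{\tbv}$-eigenspace decomposition). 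Applying $\mathrm{Desc}$ to the second inclusion gives $\tU_{\min}\supseteq\mathrm{Desc}(U_{\min})$, whence $\mathrm{Desc}(U_{\min})=\tU_{\min}$, and then $U_{\min}=V$ if and only if $\tU_{\min}=\tV^{\tbv}$ follows at once. It would be worth making this step explicit.
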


\begin{proof}
Let $(\tB_{\th},\tGamma_{i,\zeta},\tDelta_{i,\zeta})\in\Lambda(\tV^{\tbv},\tW)$ and define $(B_h,\Gamma_i,\Delta_i)\in\Lambda(V,W)$ as above. We must show that $(\tB_{\th},\tGamma_{i,\zeta},\tDelta_{i,\zeta})$ is stable if and only if $(B_h,\Gamma_i,\Delta_i)$ is stable. 

First, the `only if' direction: assume that $(\tB_{\th},\tGamma_{i,\zeta},\tDelta_{i,\zeta})$ is stable. Let $V'$ denote the subspace of $V$ generated by $\bigoplus_{i\in I}\img(\Gamma_i)$ under the action of all the maps $B_h$. We need to show that every $V_i$ is contained in $V'$. Because of~\eqref{eqn:gamma-delta-general} and~\eqref{eqn:b-general}, $V'$ is stable under each $g_{a(i)}^{\tbv}\varphi_i$. So it suffices to show that every $V_i$ with $i\in\hI$ is contained in $V'$, and hence it suffices to show that every $V_{i,\zeta}^{\tbv}$ with $(i,\zeta)\in\tI$ is contained in $V'$. This follows from the stability of $(\tB_{\th},\tGamma_{i,\zeta},\tDelta_{i,\zeta})$, using~\eqref{eqn:gamma-delta-special} and~\eqref{eqn:b-special}.

For the `if' direction, we assume that  $(B_h,\Gamma_i,\Delta_i)$ is stable, which by Proposition~\ref{prop:geom-pts}(1) means that there exists some $f\in A(V,W)_n$, $n>0$, such that $f((B_h,\Gamma_i,\Delta_i))\neq 0$. Then $f'=f\circ\psi_{\tbv}$ is a regular function on $\Lambda(\tV^{\tbv},\tW)$ that does not vanish on $(\tB_{\th},\tGamma_{i,\zeta},\tDelta_{i,\zeta})$. Now $f'$ does not belong to $A(\tV^{\tbv},\tW)_n$; instead, the fact that $\psi_{\tbv}$ is $G_{\tV^{\tbv}}$-equivariant and the definition of the homomorphism $\rho_{\tbv}:G_{\tV^{\tbv}}\to G_V$ imply that $f'$ satisfies the rule
\begin{equation} \label{eqn:modified-char}
f'((h_{i,\zeta})\cdot x)=(\prod_{(i,\zeta)\in\hI}\det(h_{i,\zeta})^{d_i})^n f'(x),
\end{equation}
for all $(h_{i,\zeta})\in G_{\tV^{\tbv}}$, $x\in\Lambda(\tV^{\tbv},\tW)$.
However, the proof of Proposition~\ref{prop:geom-pts}(1) goes through unchanged if one modifies the character so that each determinant is replaced with a positive power in this way. Hence, $(\tB_{\th},\tGamma_{i,\zeta},\tDelta_{i,\zeta})$ is stable.
\end{proof}

\begin{lem} \label{lem:induces}
The morphism $\psi_{\tbv}:\Lambda(\tV^{\tbv},\tW)^s\to\Lambda(V,W)^s$ induces a morphism $\Psi_{\tbv}:\fM(\tV^{\tbv},\tW)\to\fM(V,W)^\theta$.
\end{lem}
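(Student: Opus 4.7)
The plan is to apply the categorical-quotient property of Proposition~\ref{prop:geom-pts}(2) to $\fM(\tV^{\tbv},\tW)$, and then to check that the resulting morphism to $\fM(V,W)$ factors through the closed subvariety $\fM(V,W)^\theta$.

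By Proposition~\ref{prop:geom-pts}(2), the morphism $\Lambda(V,W)^s\to\fM(V,W)$ is a geometric quotient by $G_V$. By Lemma~\ref{lem:invariance}, the map $\psi_{\tbv}$ is $G_{\tV^{\tbv}}$-equivariant via the homomorphism $\rho_{\tbv}:G_{\tV^{\tbv}}\to G_V$ of~\eqref{eqn:rhov}, so it carries each $G_{\tV^{\tbv}}$-orbit in $\Lambda(\tV^{\tbv},\tW)^s$ into a single $G_V$-orbit in $\Lambda(V,W)^s$. Hence the composite $\Lambda(\tV^{\tbv},\tW)^s\to\Lambda(V,W)^s\to\fM(V,W)$ is constant on $G_{\tV^{\tbv}}$-orbits. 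Applying Proposition~\ref{prop:geom-pts}(2) now to $\fM(\tV^{\tbv},\tW)$, the morphism $\Lambda(\tV^{\tbv},\tW)^s\to\fM(\tV^{\tbv},\tW)$ is itself a geometric (hence categorical) quotient, so there is a unique morphism of varieties $\Psi_{\tbv}:\fM(\tV^{\tbv},\tW)\to\fM(V,W)$ making the resulting triangle commute.

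Next I verify that the image of $\Psi_{\tbv}$ lies set-theoretically in $\fM(V,W)^\theta$. Given $\tx\in\Lambda(\tV^{\tbv},\tW)^s$, write $\psi_{\tbv}(\tx)=(B_h,\Gamma_i,\Delta_i)$. The formulas~\eqref{eqn:gamma-delta-general} and~\eqref{eqn:b-general} have been engineered precisely so that the equations~\eqref{eqn:fixed} characterizing a $\theta$-stable $G_V$-orbit hold with the auxiliary element $(g_i^x)$ replaced by $(g_i^{\tbv})$. Consequently $(g_i^{\tbv})\cdot\theta(\psi_{\tbv}(\tx))=\psi_{\tbv}(\tx)$, so the $G_V$-orbit of $\psi_{\tbv}(\tx)$ is $\theta$-stable and its image in $\fM(V,W)$ is a $\theta$-fixed geometric point. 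Since $\fM(\tV^{\tbv},\tW)$ is reduced (indeed nonsingular, by Proposition~\ref{prop:geom-pts}(3)) and $\fM(V,W)^\theta$ is a closed subvariety, $\Psi_{\tbv}$ factors through $\fM(V,W)^\theta$, giving the desired morphism.

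No step here is genuinely hard once the preceding three lemmas are in place: the substantive geometric content (equivariance, preservation of the ADHM relations, and preservation of stability) has already been absorbed into Lemmas~\ref{lem:invariance}--\ref{lem:stability}, leaving only a routine invocation of the universal property of the geometric quotient together with the observation, built into the construction of $(g_i^{\tbv})$, that $\psi_{\tbv}(\tx)$ is automatically in a $\theta$-stable orbit. The mildest subtle point is that the pullback of the grading on $A(V,W)$ under $\psi_{\tbv}$ is twisted as in~\eqref{eqn:modified-char}, but this plays no role for the factorization through the geometric quotient, which only uses the orbit structure.
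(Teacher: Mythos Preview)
Your proof is correct and follows essentially the same approach as the paper: both use the geometric-quotient property together with the $G_{\tV^{\tbv}}$-equivariance of $\psi_{\tbv}$ to produce $\Psi_{\tbv}:\fM(\tV^{\tbv},\tW)\to\fM(V,W)$, and then verify on geometric points that the image is $\theta$-fixed by observing that~\eqref{eqn:fixed} holds with $(g_i^x)=(g_i^{\tbv})$. Your version is slightly more explicit in justifying why a set-theoretic check suffices (via reducedness of the domain), but the argument is otherwise the same.
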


\begin{proof}
Recall that $\fM(\tV^{\tbv},\tW)$ is a geometric quotient of $\Lambda(\tV^{\tbv},\tW)^s$ by the action of $G_{\tV^{\tbv}}$, and $\fM(V,W)$ is a geometric quotient of $\Lambda(V,W)^s$ by the action of $G_V$. Since $\psi_{\tbv}$ is $G_{\tV^{\tbv}}$-equivariant, it induces a morphism $\Psi_{\tbv}:\fM(\tV^{\tbv},\tW)\to\fM(V,W)$. We must show that the image of $\Psi_{\tbv}$ lies in the $\theta$-fixed subvariety. It suffices to check this on geometric points: in other words, we must show that for stable $(\tB_{\th},\tGamma_{i,\zeta},\tDelta_{i,\zeta})$, the $G_V$-orbit of the resulting stable $x=(B_h,\Gamma_i,\Delta_i)$ is $\theta$-stable. But the criterion for $\theta$-stability of the orbit of $x$ is~\eqref{eqn:fixed}, which holds by definition with $(g_i^x)=(g_i^{\tbv})$.
\end{proof}

\begin{lem} \label{lem:injective}
The morphism $\Psi_{\tbv}:\fM(\tV^{\tbv},\tW)\to\fM(V,W)^\theta$ is injective on geometric points.
\end{lem}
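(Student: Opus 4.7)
The plan is to work with representatives $\tilde x, \tilde x' \in \Lambda(\tV^{\tbv},\tW)^s$ of two geometric points of $\fM(\tV^{\tbv},\tW)$ that map to the same point under $\Psi_{\tbv}$. By the geometric quotient property (Proposition~\ref{prop:geom-pts}(2)) applied to $\fM(V,W)$, this means there exists $(g_i)\in G_V$ with $(g_i)\cdot\psi_{\tbv}(\tilde x)=\psi_{\tbv}(\tilde x')$. The goal is to show that $(g_i)=\rho_{\tbv}((h_{i,\zeta}))$ for some $(h_{i,\zeta})\in G_{\tV^{\tbv}}$, and then to invoke the $G_{\tV^{\tbv}}$-equivariance of $\psi_{\tbv}$ (Lemma~\ref{lem:invariance}) together with the injectivity of $\psi_{\tbv}$ as a linear map to conclude $(h_{i,\zeta})\cdot\tilde x=\tilde x'$.

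The key observation is that, by the proof of Lemma~\ref{lem:induces}, both $\psi_{\tbv}(\tilde x)$ and $\psi_{\tbv}(\tilde x')$ satisfy the fixed-point equations~\eqref{eqn:fixed} with the \emph{same} element $(g_i^{\tbv})$ (which depends only on $\tbv$, not on the point). Applying $\theta$ to the equation $(g_i)\cdot\psi_{\tbv}(\tilde x)=\psi_{\tbv}(\tilde x')$ and using the uniqueness of the element $(g_i^x)\in G_V$ provided by the free action in Proposition~\ref{prop:triv-stab}, I obtain the compatibility identity
\[
(g_i)(g_i^{\tbv})=(g_i^{\tbv})\,\theta((g_i))\quad\text{in}\ G_V.
\]
Extracting the $a(i)$-component yields $g_{a(i)}=g_{a(i)}^{\tbv}\varphi_i g_i\varphi_i^{-1}(g_{a(i)}^{\tbv})^{-1}$, which is exactly the recursion defining the homomorphism $\rho_{\tbv}$ of~\eqref{eqn:rhov}.

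Iterating this recursion $d_i$ times around an $\langle a\rangle$-orbit and using $a^{d_i}(i)=i$ shows that, for each $i\in\hI$, $g_i$ commutes with $\tau_i^{\tbv}$ and therefore preserves the eigenspace decomposition $V_i=\bigoplus_{\zeta\in\mu_{e_i}}V_{i,\zeta}^{\tbv}$. Setting $h_{i,\zeta}:=g_i|_{V_{i,\zeta}^{\tbv}}$ produces an element $(h_{i,\zeta})\in G_{\tV^{\tbv}}$ whose image under $\rho_{\tbv}$ agrees with $(g_i)$ on $\hI$ and hence everywhere (by the common recursion). Using equivariance, $\psi_{\tbv}((h_{i,\zeta})\cdot\tilde x)=(g_i)\cdot\psi_{\tbv}(\tilde x)=\psi_{\tbv}(\tilde x')$.

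Finally, the linear map $\psi_{\tbv}$ is injective: for $i\in\hI$ one recovers $\tGamma_{i,\zeta}$ and $\tDelta_{i,\zeta}$ from the block-diagonal formulas~\eqref{eqn:gamma-delta-special} (dividing by the nonzero integer $e_i$), and for $h\in\Omega_1$ one recovers $\tB_{(\hh,\zeta,\zeta')}$ from~\eqref{eqn:b-special} since the prefactor is an invertible composition of $\varphi$'s and $g^{\tbv}$'s and $e_h\ne 0$. Hence $(h_{i,\zeta})\cdot\tilde x=\tilde x'$, so $[\tilde x]=[\tilde x']$ in $\fM(\tV^{\tbv},\tW)$. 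The main technical hurdle is the careful bookkeeping that converts the single relation $(g_i)(g_i^{\tbv})=(g_i^{\tbv})\theta((g_i))$ into an honest element of $G_{\tV^{\tbv}}$; once this is handled, each remaining step reduces to a reference to a previously established lemma or a direct reading of the definitions.
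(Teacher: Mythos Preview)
Your proof is correct and follows essentially the same approach as the paper's: both use that $\psi_{\tbv}(\tilde x)$ and $\psi_{\tbv}(\tilde x')$ satisfy~\eqref{eqn:fixed} with the common element $(g_i^{\tbv})$, combine this with the free $G_V$-action (Proposition~\ref{prop:triv-stab}) to obtain the recursion $g_{a(i)}=(g_{a(i)}^{\tbv}\varphi_i)g_i(g_{a(i)}^{\tbv}\varphi_i)^{-1}$, deduce that $g_i$ commutes with $\tau_i^{\tbv}$ for $i\in\hI$ so that $(g_i)=\rho_{\tbv}((h_{i,\zeta}))$, and conclude via $G_{\tV^{\tbv}}$-equivariance and injectivity of the linear map $\psi_{\tbv}$. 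Your write-up is slightly more explicit in unpacking why $\psi_{\tbv}$ is injective and how the recursion yields commutation with $\tau_i^{\tbv}$, but the logical skeleton is identical to the paper's.
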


\begin{proof}
Suppose that $x=\psi_{\tbv}(y)$ and $(h_i)\cdot x=\psi_{\tbv}(y')$ where $y,y'\in\Lambda(\tV^{\tbv},\tW)^s$ and $(h_i)\in G_V$. We must show that $y'=(h_{i,\zeta})\cdot y$ for some $(h_{i,\zeta})\in G_{\tV^{\tbv}}$. As we have seen in the proof of Lemma~\ref{lem:induces}, the fact that $x\in\psi_{\tbv}(\Lambda(\tV^{\tbv},\tW)^s)$ implies that $x=(g_i^{\tbv})\cdot\theta(x)$. Likewise, we have 
\[ (h_i)\cdot x = (g_i^{\tbv})\cdot\theta((h_i)\cdot x)=(g_i^{\tbv})\theta((h_i))(g_i^{\tbv})^{-1}\cdot x, \]
which by Proposition~\ref{prop:triv-stab} implies that
\begin{equation} \label{eqn:nice}
h_i=(g_i^{\tbv}\varphi_{a^{-1}(i)})h_{a^{-1}(i)}(g_i^{\tbv}\varphi_{a^{-1}(i)})^{-1}\ \text{ for all }i\in I.
\end{equation} 
This implies that $h_i$ commutes with $\tau_i^{\tbv}$ for all $i\in\hI$, and therefore there is some $(h_{i,\zeta})\in G_{\tV^{\tbv}}$ such that $h_i=\bigoplus_{\zeta\in\mu_{e_i}}h_{i,\zeta}$ for all $i\in\hI$. From~\eqref{eqn:nice} we see that $(h_i)=\rho_{\tbv}((h_{i,\zeta}))$. Since $\psi_{\tbv}$ is $G_{\tV^{\tbv}}$-equivariant, we conclude that $\psi_{\tbv}(y')=\psi_{\tbv}((h_{i,\zeta})\cdot y)$, which suffices because the linear map $\psi_{\tbv}$ is obviously injective.
\end{proof}

\begin{lem} \label{lem:surjective}
Every geometric point of $\fM(V,W)^\theta$ belongs to $\Psi_{\tbv}(\fM(\tV^{\tbv},\tW))$ for a unique $\tbv\in\cD(\bv)$. 
\end{lem}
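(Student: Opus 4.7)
The plan is to exhibit, for any $\theta$-stable $G_V$-orbit represented by $x\in\Lambda(V,W)^s$, a unique $\tbv\in\cD(\bv)$ and an explicit preimage in $\Lambda(\tV^{\tbv},\tW)^s$. The candidate is $v_{i,\zeta}=\dim V_{i,\zeta}^x$ for $(i,\zeta)\in\tI$. For uniqueness, suppose $\psi_{\tbv'}(y')\in G_V\cdot x$ for some $y'\in\Lambda(\tV^{\tbv'},\tW)^s$. As observed in the proof of Lemma~\ref{lem:induces} we have $g^{\psi_{\tbv'}(y')}=g^{\tbv'}$, hence $\tau_i^{\psi_{\tbv'}(y')}=\tau_i^{\tbv'}$, whose $\zeta$-eigenspace on $V_i$ for $i\in\hI$ has dimension $v'_{i,\zeta}$. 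Since $\tau_i$ transforms by $GL(V_i)$-conjugation under the $G_V$-action on orbits (an easy consequence of the uniqueness in Proposition~\ref{prop:triv-stab} applied to $\theta(g\cdot x)$), these dimensions agree with $\dim V_{i,\zeta}^x=v_{i,\zeta}$, forcing $\tbv'=\tbv$.

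For existence, fix $\tbv$ as above and construct $g\in G_V$ as follows. For each $j\in\hI$ pick $g_j\in\GL(V_j)$ with $g_j(V_{j,\zeta}^x)=V_{j,\zeta}^{\tbv}$, which is possible since the dimensions agree. For $i\notin\hI$, recursively define $g_i=\varphi_{a^{-1}(i)}\,g_{a^{-1}(i)}\,\varphi_{a^{-1}(i)}^{-1}\,(g_i^x)^{-1}$, working outward from $j$ along the $\langle a\rangle$-orbit. Setting $x'=g\cdot x$, an immediate substitution into the formula $g_i^{x'}=g_i\,g_i^x\,\varphi_{a^{-1}(i)}\,g_{a^{-1}(i)}^{-1}\,\varphi_{a^{-1}(i)}^{-1}$ yields $g_i^{x'}=\id=g_i^{\tbv}$ for $i\notin\hI$. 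For $j\in\hI$, unwinding the recursion fully around the orbit and telescoping with the $\varphi$'s via~\eqref{eqn:varphi} gives $g_j^{x'}=g_j\,\tau_j^x\,g_j^{-1}$. Moreover $\tau_j^{\tbv}=g_j^{\tbv}$ for $j\in\hI$, because the intermediate factors $g_{a^k(j)}^{\tbv}$ are trivial and $\varphi_{a^{d_j-1}(j)}\cdots\varphi_j=\id$; since $g_j$ takes the $\zeta$-eigenspaces of $\tau_j^x$ to those of $g_j^{\tbv}$ with matching eigenvalues, we obtain $g_j^{x'}=g_j^{\tbv}$.

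Once $g^{x'}=g^{\tbv}$, so $\tau_i^{x'}=\tau_i^{\tbv}$ for all $i$, the intertwining relations~\eqref{eqn:intertwine} applied to $x'$ force $\Gamma_i'(W_{i,\zeta})\subseteq V_{i,\zeta}^{\tbv}$, $\Delta_i'(V_{i,\zeta}^{\tbv})\subseteq W_{i,\zeta}$, and the block structure~\eqref{eqn:restriction} on $B_h'$. Extracting these blocks (absorbing the prefactors $e_i,e_h$ prescribed in~\eqref{eqn:gamma-delta-special} and~\eqref{eqn:b-special}) yields $y=(\tB_{\th},\tGamma_{i,\zeta},\tDelta_{i,\zeta})\in\bM(\tV^{\tbv},\tW)$ with $\psi_{\tbv}(y)=x'$. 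Reversing the calculation in the proof of Lemma~\ref{lem:adhm} shows that $y$ satisfies the defining equations of $\Lambda(\tV^{\tbv},\tW)$, and Lemma~\ref{lem:stability} gives stability; thus the $G_V$-orbit of $x$ equals $\Psi_{\tbv}$ applied to the $G_{\tV^{\tbv}}$-orbit of $y$. The main obstacle is the telescoping verification at $j\in\hI$ in the second paragraph, which ensures that the recursion closes up to give $g_j^{x'}=g_j^{\tbv}$ rather than some unrelated element.
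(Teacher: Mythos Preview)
Your proof is correct and follows essentially the same approach as the paper. The only difference is that where the paper simply asserts one can choose $(h_i)\in G_V$ so that $V_{i,\zeta}^{x'}=V_{i,\zeta}^{\tbv}$ for $(i,\zeta)\in\tI$ and $g_i^{x'}=\id$ for $i\notin\hI$, you supply the explicit recursive formula for $h_i$ along the $\langle a\rangle$-orbit and verify the telescoping identity $g_j^{x'}=h_j\tau_j^x h_j^{-1}$ at the base vertex; this is a harmless elaboration of the same idea.
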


\begin{proof}
Let $x\in\Lambda(V,W)^s$ be such that its $G_V$-orbit is $\theta$-stable. As seen in \S\ref{subsect:motivation}, there is a unique $(g_i^x)\in G_V$ such that~\eqref{eqn:fixed} holds, and we have resulting definitions of $(\tau_i^x)$ and decompositions $V_i=\bigoplus V_{i,\zeta}^x$. Define $\tbv^x\in\cD(\bv)$ by $v_{i,\zeta}^x=\dim V_{i,\zeta}^x$. We want to show that some element of the $G_V$-orbit of $x$ belongs to $\psi_{\tbv^x}(\Lambda(\tV^{{\tbv}^x},\tW)^s)$. By Lemma~\ref{lem:stability}, it suffices to show that some element of the $G_V$-orbit of $x$ belongs to $\psi_{\tbv^x}(\Lambda(\tV^{{\tbv}^x},\tW))$.

Consider an element of the $G_V$-orbit of $x$, namely $x'=(h_i)\cdot x$ for some $(h_i)\in G_V$. Then $(g_i^{x'})\cdot\theta((h_i)\cdot x)=(h_i)\cdot x$, implying that $(g_i^{x'})=(h_i)(g_i^x)\theta((h_i))^{-1}$. That is,
\begin{equation} \label{eqn:change}
g_i^{x'}=h_i g_i^x \varphi_{a^{-1}(i)}h_{a^{-1}(i)}^{-1}\varphi_{a^{-1}(i)}^{-1}\ \text{ for all }i\in I. 
\end{equation} 
As a consequence,
\begin{equation} \label{eqn:change2}
\tau_i^{x'}=h_i\tau_i^x h_i^{-1}\text{ and }V_{i,\zeta}^{x'}=h_i V_{i,\zeta}^x\ \text{ for all }i,\zeta.
\end{equation}
Therefore we can choose $(h_i)$ in such a way that $V_{i,\zeta}^{x'}=V_{i,\zeta}^{\tbv^x}$ for all $(i,\zeta)\in\hI$, and $g_i^{x'}$ is the identity for all $i\notin\hI$. It follows by definition that $V_{i,\zeta}^{x'}=V_{i,\zeta}^{\tbv^x}$ for all $i,\zeta$, and that for $i\in\hI$, $g_i^{x'}$ acts on $V_{i,\zeta}^{\tbv^x}$ as scalar multiplication by $\zeta$.
We then want to show that this particular $x'=(B_h,\Gamma_i,\Delta_i)$ arises as $\psi_{\tbv^x}(\tB_{\th},\tGamma_{i,\zeta},\tDelta_{i,\zeta})$ for some $(\tB_{\th},\tGamma_{i,\zeta},\tDelta_{i,\zeta})\in\Lambda(\tV^{{\tbv}^x},\tW)$. 

Of course, we must define $\tB_{\th},\tGamma_{i,\zeta},\tDelta_{i,\zeta}$ in the ways that are forced by the equations~\eqref{eqn:gamma-delta-special} and~\eqref{eqn:b-special}. The fact that $\psi_{\tbv^x}(\tB_{\th},\tGamma_{i,\zeta},\tDelta_{i,\zeta})=(B_h,\Gamma_i,\Delta_i)$ then follows from~\eqref{eqn:fixed}. To show that  $(\tB_{\th},\tGamma_{i,\zeta},\tDelta_{i,\zeta})\in\Lambda(\tV^{{\tbv}^x},\tW)$, we need to show that the equation analogous to~\eqref{eqn:adhm} holds for each $(i,\zeta)\in\tI$. We know that $(B_h,\Gamma_i,\Delta_i)\in\Lambda(V,W)$, so we know that the equation~\eqref{eqn:adhm} itself holds for each $i\in I$, and in particular for each $i\in\hI$. Taking the $|_\zeta^\zeta$ component of each side of this equation (in the notation of the proof of Lemma~\ref{lem:adhm}), and using~\eqref{eqn:desired}, we obtain the desired equation.

This proves that the geometric point of $\fM(V,W)^\theta$ given by the $G_V$-orbit of $x$ belongs to $\Psi_{\tbv^x}(\fM(\tV^{\tbv^x},\tW))$. As seen in~\eqref{eqn:change2}, we have $\tbv^{x'}=\tbv^x$ for all $x'$ in the $G_V$-orbit of $x$. On the other hand, we already observed that if $x'\in\psi_{\tbv}(\Lambda(\tV^{\tbv},\tW)^s)$ for some $\tbv\in\cD(\bv)$, then $(g_i^{x'})=(g_i^{\tbv})$, so $\tbv^{x'}=\tbv$. We conclude that the $G_V$-orbit of $x$ cannot belong to any $\Psi_{\tbv}(\fM(\tV^{\tbv},\tW))$ for $\tbv\neq\tbv^x$.
\end{proof}

We can now define the desired morphism $\Psi:\coprod_{\tbv\in\cD(\bv)}\fM(\tV^{\tbv},\tW)\to\fM(V,W)^\theta$ by assembling together the various morphisms $\Psi_{\tbv}$ as $\tbv$ runs over $\cD(\bv)$. The proof of Theorem~\ref{thm:fixed-points} is as follows.

\begin{proof}
By definition of $\Psi$ and Lemma~\ref{lem:invariance}, $\Psi$ is $G_{\tW}$-equivariant. We claim that $\fM(V,W)^\theta$ is the disconnected union of the images $\Psi(\fM(\tV^{\tbv},\tW))$. We already know from Lemma~\ref{lem:surjective} that these images are disjoint and that their union is all of $\fM(V,W)^\theta$. To show that this is a disconnected union, it suffices to show that for $x\in\Lambda(V,W)^s$ such that its $G_V$-orbit is $\theta$-stable, the element $\tbv^x\in\cD(\bv)$ defined in the proof of Lemma~\ref{lem:surjective} depends continuously on $x$. This is clear: each component $v_{i,\zeta}^x$ of $\tbv^x$ depends semicontinuously on $x$ by general results, and the sum $\sum_{\zeta\in\mu_{e_i}}v_{i,\zeta}^x$ is constrained to equal the fixed $v_i$.

So all that remains is to show that $\Psi_{\tbv}:\fM(\tV^{\tbv},\tW)\to\Psi(\fM(\tV^{\tbv},\tW))$ is an isomorphism, when $\tbv\in\cD(\bv)$ is such that $\fM(\tV^{\tbv},\tW)$ is nonempty. By Lemma~\ref{lem:injective}, this is a morphism of complex varieties that is bijective on geometric points. Invoking Zariski's main theorem, we need only show that the codomain $\Psi(\fM(\tV^{\tbv},\tW))$ is normal. But $\Psi(\fM(\tV^{\tbv},\tW))$ is a connected component of $\fM(V,W)^\theta$, so by Proposition~\ref{prop:geom-pts}(3) it is nonsingular. 
\end{proof}

\subsection{Complements to Theorem~\ref{thm:fixed-points}}
\label{subsect:lagrangian}

We need to extend the statement of Theorem~\ref{thm:fixed-points} in two directions. Firstly, we can consider the varieties $\fM_0$, $\fM_1$, $\fL$.

\begin{prop} \label{prop:referee}
Continue the setting of Theorem~\ref{thm:fixed-points}. For every $\tbv\in\cD(\bv)$, there is a $G_{\tW}$-equivariant morphism $\Psi_{\tbv,0}:\fM_0(\tV^{\tbv},\tW)\to\fM_0(V,W)^\theta$ such that the following diagram commutes:
\[
\xymatrix{
\fM(\tV^{\tbv},\tW)\ar[r]^{\Psi_{\tbv}}\ar[d] & \fM(V,W)^\theta\ar[d]\\
\fM_0(\tV^{\tbv},\tW)\ar[r]^{\Psi_{\tbv,0}} & \fM_0(V,W)^\theta
}
\]
Here the vertical map is the canonical morphism $\pi$ in each case. Hence the morphism $\Psi_{\tbv,0}$ restricts to a morphism $\Psi_{\tbv,1}:\fM_1(\tV^{\tbv},\tW)\to\fM_1(V,W)^\theta$. 
\end{prop}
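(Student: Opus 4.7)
The plan is to construct $\Psi_{\tbv,0}$ by GIT functoriality, exploiting the fact that $\psi_{\tbv}$ is already a morphism at the level of $\Lambda$ and is equivariant with respect to the homomorphism $\rho_{\tbv}:G_{\tV^{\tbv}}\to G_V$. First I would pull back regular functions along $\psi_{\tbv}:\Lambda(\tV^{\tbv},\tW)\to\Lambda(V,W)$ to obtain $\psi_{\tbv}^*:\C[\Lambda(V,W)]\to\C[\Lambda(\tV^{\tbv},\tW)]$. By Lemma~\ref{lem:invariance}, $\psi_{\tbv}$ is $G_{\tV^{\tbv}}\times G_{\tW}$-equivariant (with $G_{\tV^{\tbv}}$ acting via $\rho_{\tbv}$), so $\psi_{\tbv}^*$ restricts to a $G_{\tW}$-equivariant ring homomorphism $A(V,W)_0\to A(\tV^{\tbv},\tW)_0$, which induces the desired $G_{\tW}$-equivariant morphism $\Psi_{\tbv,0}:\fM_0(\tV^{\tbv},\tW)\to\fM_0(V,W)$.

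Next I would verify that the image of $\Psi_{\tbv,0}$ actually lies in $\fM_0(V,W)^\theta$. The key point, already implicit in the proof of Lemma~\ref{lem:induces}, is that the formulas~\eqref{eqn:gamma-delta-general} and~\eqref{eqn:b-general} defining $\psi_{\tbv}$ are precisely arranged so that
\[
\theta(\psi_{\tbv}(y))=(g_i^{\tbv})^{-1}\cdot\psi_{\tbv}(y)\qquad\text{for every }y\in\Lambda(\tV^{\tbv},\tW).
\]
Hence for any $G_V$-invariant $f\in A(V,W)_0$ we have $f(\theta(\psi_{\tbv}(y)))=f(\psi_{\tbv}(y))$, which means $\psi_{\tbv}^*\circ\theta^*=\psi_{\tbv}^*$ on $A(V,W)_0$. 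This is exactly the ring-level statement that $\Psi_{\tbv,0}$ factors through the closed subvariety $\fM_0(V,W)^\theta$.

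Commutativity of the square is then essentially formal: both $\Psi_{\tbv}$ and $\Psi_{\tbv,0}$ are induced by the same pullback $\psi_{\tbv}^*$, acting on the full graded ring $A(V,W)=\bigoplus_{n\geq 0}A(V,W)_n$ in the first case and on its degree-zero part $A(V,W)_0$ in the second; the vertical maps $\pi$ come from the inclusions $A(V,W)_0\hookrightarrow A(V,W)$ and $A(\tV^{\tbv},\tW)_0\hookrightarrow A(\tV^{\tbv},\tW)$, and these inclusions intertwine the two pullbacks by construction. Finally, the existence of $\Psi_{\tbv,1}$ follows by restriction: using the commutative square and the $\theta$-equivariance of $\pi$,
\[
\Psi_{\tbv,0}(\fM_1(\tV^{\tbv},\tW))=\pi(\Psi_{\tbv}(\fM(\tV^{\tbv},\tW)))\subseteq\pi(\fM(V,W)^\theta)\subseteq\fM_1(V,W)\cap\fM_0(V,W)^\theta=\fM_1(V,W)^\theta.
\]

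The only genuine computation is the identity $\theta(\psi_{\tbv}(y))=(g_i^{\tbv})^{-1}\cdot\psi_{\tbv}(y)$, and this is essentially a restatement of how $\psi_{\tbv}$ was defined via the formulas~\eqref{eqn:gamma-delta-general} and~\eqref{eqn:b-general} (which are modelled on~\eqref{eqn:fixed} with $(g_i^x)$ specialised to $(g_i^{\tbv})$); I expect no serious obstacle beyond chasing these formulas carefully, since the remainder of the argument is formal GIT and is already prepared by the proof of Theorem~\ref{thm:fixed-points}.
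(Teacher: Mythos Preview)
Your approach is correct and essentially the same as the paper's: both construct $\Psi_{\tbv,0}$ from the equivariance of $\psi_{\tbv}$ (Lemma~\ref{lem:invariance} together with the $n=0$ case of~\eqref{eqn:modified-char}) and verify $\theta$-fixedness via the identity $(g_i^{\tbv})\cdot\theta(\psi_{\tbv}(y))=\psi_{\tbv}(y)$, the paper phrasing this on geometric points (the $G_V$-orbit of $\psi_{\tbv}(y)$ is $\theta$-stable, hence so is the unique closed orbit in its closure) while you phrase it ring-theoretically.

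One minor imprecision in your commutativity argument: $\psi_{\tbv}^*$ does \emph{not} restrict to a graded ring map $A(V,W)\to A(\tV^{\tbv},\tW)$, since by~\eqref{eqn:modified-char} the pullback of $f\in A(V,W)_n$ transforms under $G_{\tV^{\tbv}}$ by the character $\prod_{(i,\zeta)}\det(h_{i,\zeta})^{d_i n}$, which is not a power of $\prod_{(i,\zeta)}\det(h_{i,\zeta})$ unless all the $d_i$ coincide. So $\Psi_{\tbv}$ is not literally induced by $\psi_{\tbv}^*$ on the graded ring. The remedy is immediate: check commutativity on geometric points using the description of $\Psi_{\tbv}$ as a map of geometric quotients (Lemma~\ref{lem:induces}), noting that both composites around the square send the $G_{\tV^{\tbv}}$-orbit of $y\in\Lambda(\tV^{\tbv},\tW)^s$ to the evaluation homomorphism $f\mapsto f(\psi_{\tbv}(y))$ on $A(V,W)_0$.
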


\begin{proof}
By Lemma \ref{lem:invariance} and the $n=0$ case of~\eqref{eqn:modified-char}, the linear map $\psi_{\tbv}$ induces a $G_{\tW}$-equivariant morphism $\Psi_{\tbv,0}:\fM_0(\tV^{\tbv},\tW)\to\fM_0(V,W)$. On geometric points, $\Psi_{\tbv,0}$ takes the closed $G_{\tV^{\tbv}}$-orbit of $(\tB_{\th},\tGamma_{i,\zeta},\tDelta_{i,\zeta})\in\Lambda(\tV^{\tbv},\tW)$ to the unique closed $G_V$-orbit in the closure of the $G_V$-orbit of $(B_h,\Gamma_i,\Delta_i)=\psi_{\tbv}(\tB_{\th},\tGamma_{i,\zeta},\tDelta_{i,\zeta})$. Since the $G_V$-orbit of $(B_h,\Gamma_i,\Delta_i)$ is $\theta$-stable (because~\eqref{eqn:fixed} holds with $(g_i^x)=(g_i^{\tbv})$), the unique closed $G_V$-orbit in its closure is also $\theta$-stable. So the image of $\Psi_{\tbv,0}$ lies in the $\theta$-fixed subvariety $\fM_0(V,W)^\theta$. The rest of the statement is easy.
\end{proof}

Assembling the various $\Psi_{\tbv,0}$, $\Psi_{\tbv,1}$ as $\tbv$ runs over $\cD(\bv)$, we obtain morphisms
\[
\begin{split}
\Psi_0&:\coprod_{\tbv\in\cD(\bv)}\fM_0(\tV^{\tbv},\tW)\to\fM_0(V,W)^\theta,\\
\Psi_1&:\coprod_{\tbv\in\cD(\bv)}\fM_1(\tV^{\tbv},\tW)\to\fM_1(V,W)^\theta.
\end{split}
\]
In contrast to the isomorphism $\Psi$, these morphisms are not injective in general (or even in the special case of Example~\ref{ex:silly}). We do not know whether $\Psi_0$ and $\Psi_1$ are surjective in general; in the special cases relevant for Theorem~\ref{thm:isomorphisms-intro}, we will deduce the surjectivity of $\Psi_1$ from an explicit description of $\fM_1(V,W)^\theta$ (see \S\ref{subsect:the-end}).

\begin{prop} \label{prop:lagrangian}
Continue the setting of Theorem~\ref{thm:fixed-points}.
\begin{enumerate}
\item We have $\Psi(\fL(\tV^{\tbv},\tW))\subseteq\fL(V,W)^\theta$ for all $\tbv\in\cD(\bv)$. 
\item Under the additional assumption that the underlying graph $(\tI,\tH)$ of the split-quotient quiver is of simply-laced finite Dynkin type, the preimage $\Psi^{-1}(\fL(V,W)^\theta)$ is exactly $\coprod_{\tbv\in\cD(\bv)}\fL(\tV^{\tbv},\tW)$. 
\end{enumerate}
\end{prop}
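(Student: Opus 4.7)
My approach is to use the characterization of $\fL$ from Proposition~\ref{prop:nilpotency}: on geometric points, a stable orbit $[(B_h,\Gamma_i,\Delta_i)]$ lies in $\fL(V,W)$ precisely when $\Delta_i=0$ for all $i\in I$ and $(B_h)$ is nilpotent, with the analogous criterion for $\fL(\tV^{\tbv},\tW)$. I will then translate each inclusion into a statement about the linear map $\psi_{\tbv}$ constructed in \S\ref{subsect:proof}.

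For part (1), let $(\tB_{\th},\tGamma_{i,\zeta},\tDelta_{i,\zeta})\in\Lambda(\tV^{\tbv},\tW)^s$ represent a point of $\fL(\tV^{\tbv},\tW)$, and write $(B_h,\Gamma_i,\Delta_i)=\psi_{\tbv}(\tB_{\th},\tGamma_{i,\zeta},\tDelta_{i,\zeta})$. The vanishing $\tDelta_{i,\zeta}=0$ forces $\Delta_i=0$ for $i\in\hI$ via~\eqref{eqn:gamma-delta-special}, and then for all $i\in I$ via~\eqref{eqn:gamma-delta-general}. The nilpotency of $(B_h)$ is the main point: the intertwining relation~\eqref{eqn:b-intertwine}, together with the description of $V_{i,\zeta}^{\tbv}$ as the $\zeta$-eigenspace of $\tau_i^{\tbv}$, shows that $B_h(V_{s(h),\zeta}^{\tbv})\subseteq\bigoplus_{\zeta\overset{h}{\to}\zeta'}V_{t(h),\zeta'}^{\tbv}$, and that each individual block $V_{s(h),\zeta}^{\tbv}\to V_{t(h),\zeta'}^{\tbv}$ agrees, up to nonzero scalars and conjugation by the fixed $\varphi$- and $g^{\tbv}$-isomorphisms, with the single map $\tB_{\th}$ for the corresponding edge $\th=(\langle a\rangle\cdot h,\zeta,\zeta')\in\tH$. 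Consequently the restriction of a length-$s$ product $B_{h_1}\cdots B_{h_s}$ to a source summand $V_{s(h_s),\zeta_0}^{\tbv}$, projected to a target summand $V_{t(h_1),\zeta_s}^{\tbv}$, expands as a sum (over eigenvalue sequences $\zeta_0,\ldots,\zeta_s$ compatible with the edges) of products of $s$ maps $\tB_{\th}$, each such product being conjugated by the same global isomorphism. Nilpotency of $(\tB_{\th})$ therefore implies $B_{h_1}\cdots B_{h_s}=0$ once $s$ exceeds the nilpotency length, giving nilpotency of $(B_h)$.

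For part (2), assume in addition that $(\tI,\tH)$ is of simply-laced finite Dynkin type, and that $\Psi_{\tbv}(y)\in\fL(V,W)^\theta$ for some $y\in\fM(\tV^{\tbv},\tW)$. Choose a representative $(\tB_{\th},\tGamma_{i,\zeta},\tDelta_{i,\zeta})$ of $y$ and set $(B_h,\Gamma_i,\Delta_i)=\psi_{\tbv}(\tB_{\th},\tGamma_{i,\zeta},\tDelta_{i,\zeta})$. By Proposition~\ref{prop:nilpotency} applied to $(I,H,\Omega)$, we have $\Delta_i=0$ for all $i\in I$; restricting to $i\in\hI$, formula~\eqref{eqn:gamma-delta-special} gives $\bigoplus_{\zeta}\tDelta_{i,\zeta}=0$, whence $\tDelta_{i,\zeta}=0$ for every $(i,\zeta)\in\tI$. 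Invoking the \emph{moreover} clause of Proposition~\ref{prop:nilpotency} on $(\tI,\tH,\tOmega)$, permissible by the finite Dynkin hypothesis on $(\tI,\tH)$, nilpotency of $(\tB_{\th})$ is automatic, so $y\in\fL(\tV^{\tbv},\tW)$. Combined with (1), this yields $\Psi^{-1}(\fL(V,W)^\theta)=\coprod_{\tbv\in\cD(\bv)}\fL(\tV^{\tbv},\tW)$. The main obstacle is the block-level bookkeeping in part (1): once one verifies that products of the $B_h$'s decompose, with respect to the $V_{i,\zeta}^{\tbv}$-decomposition, into products of $\tB_{\th}$'s up to a common change of basis, the nilpotency transfers routinely, and part (2) then becomes largely formal.
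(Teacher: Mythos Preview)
Your proof is correct. Part~(2) matches the paper's argument exactly: extract $\tDelta_{i,\zeta}=0$ from $\Delta_i=0$ via the block form~\eqref{eqn:gamma-delta-special}, then invoke the finite-Dynkin clause of Proposition~\ref{prop:nilpotency}.

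Part~(1), however, takes a genuinely different route. The paper proves it in one line using Proposition~\ref{prop:referee}: the commutative square there, together with the obvious fact that $\Psi_{\tbv,0}([0])=[0]$ (since $\psi_{\tbv}$ is linear), gives $\Psi_{\tbv}(\pi^{-1}([0]))\subseteq\pi^{-1}([0])$ immediately. You instead appeal again to the Lusztig criterion of Proposition~\ref{prop:nilpotency} and must show directly that nilpotency of $(\tB_{\th})$ forces nilpotency of $(B_h)$. Your block argument is valid, though the phrase ``conjugated by the same global isomorphism'' deserves one more sentence of justification: if for each $i\in I$ one sets $\Xi_i:V_{j(i)}\to V_i$ to be the product of the relevant $g^{\tbv}\varphi$'s (with $j(i)\in\hI$ the orbit representative), then $C_h:=\Xi_{t(h)}^{-1}B_h\,\Xi_{s(h)}$ has each $\zeta$-block equal to a nonzero scalar times a single $\tB_{\th}$, and $B_{h_1}\cdots B_{h_s}=\Xi_{t(h_1)}C_{h_1}\cdots C_{h_s}\Xi_{s(h_s)}^{-1}$ because the intermediate $\Xi$'s cancel. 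This makes the nilpotency transfer transparent. Your approach is more elementary in that it avoids Proposition~\ref{prop:referee} entirely; the paper's approach is shorter and conceptually cleaner, since the passage to $\fM_0$ handles the closure condition automatically without ever unpacking the nilpotency criterion on the $(I,H)$ side.
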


\begin{proof}
Part (1) follows from the commutative diagram in Proposition~\ref{prop:referee}, since $\Psi_{\tbv,0}([0])=[0]$. For part (2) we need to show that if $(\tB_{\th},\tGamma_{i,\zeta},\tDelta_{i,\zeta})\in\Lambda(\tV^{\tbv},\tW)^s$ and $\psi_{\tbv}((\tB_{\th},\tGamma_{i,\zeta},\tDelta_{i,\zeta}))=(B_h,\Gamma_i,\Delta_i)\in\Lambda(V,W)^s$ as above, and the closure of $G_V\cdot(B_h,\Gamma_i,\Delta_i)$ contains $0$, then the closure of $G_{\tV^{\tbv}}\cdot (\tB_{\th},\tGamma_{i,\zeta},\tDelta_{i,\zeta})$ contains $0$. This follows easily from the criterion of Proposition~\ref{prop:nilpotency}: the hypothesis includes the fact that $\Delta_i=0$ for all $i\in I$, from which it is obvious that $\tDelta_{i,\zeta}=0$ for all $(i,\zeta)\in\tI$, since $\tDelta_{i,\zeta}$ is a block in the block-matrix form of $\Delta_i$.
\end{proof}

Secondly, we can extend the $G_{\tW}$-equivariance part of Theorem~\ref{thm:fixed-points}. By definition, the diagonally-embedded scalar subgroup $\C^\times$ of $G_V\times G_W$ acts trivially on $\bM(V,W)$, so the scalar subgroup of $G_W$ acts trivially on $\fM(V,W)$. Hence the action of $G_W^\theta$ on $\fM(V,W)^\theta$ extends to the group
\begin{equation}
G_W^{\theta\sim}=\{\alpha\in G_W\,|\,\theta(\alpha)=\lambda\alpha\text{ for some }\lambda\in\C^\times\}.
\end{equation}
Observe that $\lambda$ here is uniquely determined by $\alpha$, and the map $\alpha\mapsto\lambda$ is a group homomorphism $G_W^{\theta\sim}\to\mu_{\bn}$ whose kernel is $G_W^\theta$. Thus, $G_W^\theta$ is the identity component of $G_W^{\theta\sim}$.

If $\alpha=(\alpha_i)$, the equation $\theta(\alpha)=\lambda\alpha$ means that $\sigma_i\alpha_i\sigma_i^{-1}=\lambda\alpha_{a(i)}$ for all $i\in I$, which implies that $\alpha_i(W_{i,\zeta})=W_{i,\zeta\lambda^{d_i}}$ for all $i\in I$ and $\zeta\in\mu_{e_i}$, and hence that
\begin{equation} \label{eqn:lambda}
w_{i,\zeta}=w_{i,\zeta\lambda^{d_i}}\text{ for all }(i,\zeta)\in\tI.
\end{equation}
In fact, it is easy to see that the image of $G_W^{\theta\sim}\to\mu_{\bn}$ consists precisely of those $\lambda\in\mu_{\bn}$ satisfying~\eqref{eqn:lambda}.

Using the isomorphism $\Psi$ of Theorem~\ref{thm:fixed-points}, we obtain an action of $G_W^{\theta\sim}$ on the disconnected union $\coprod_{\tbv\in\cD(\bv)}\fM(\tV^{\tbv},\tW)$. We already know that $G_W^\theta$ acts via~\eqref{eqn:gp-isom}.

\begin{prop} \label{prop:conn-compts}
Continue the setting of Theorem~\ref{thm:fixed-points}. Let $\alpha\in G_W^{\theta\sim}$, $\theta(\alpha)=\lambda\alpha$.
\begin{enumerate}
\item For any $\tbv=(v_{i,\zeta})\in\cD(\bv)$, $\alpha$ maps $\fM(\tV^{\tbv},\tW)$ to $\fM(\tV^{\tbv'},\tW)$ where $\tbv'\in\cD(\bv)$ is defined by $v_{i,\zeta}'=v_{i,\zeta\lambda^{-d_i}}$ for all $(i,\zeta)\in\tI$.
\item Let $\tbv=(v_{i,\zeta})\in\cD(\bv)$ be such that $v_{i,\zeta}=v_{i,\zeta\lambda^{d_i}}$ for all $(i,\zeta)\in\tI$, so that the action of $\alpha$ stabilizes $\fM(\tV^{\tbv},\tW)$. Assume furthermore that $\alpha^\bn=\mathrm{id}_W$ and that $\lambda^{f(h)}=1$ for all $h\in\Omega_1$, in the notation of \S\ref{subsect:proof}. Then the action of $\alpha$ on $\fM(\tV^{\tbv},\tW)$ is a diagram automorphism as defined in \S\ref{subsect:diag-aut}, relative to some power of the automorphism $\ta$ of $(\tI,\tH,\tOmega)$ introduced in \S\ref{subsect:split-quotient}.
\end{enumerate}
\end{prop}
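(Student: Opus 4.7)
For part (1), the plan is a direct computation tracking the distinguished element $g^x_i$ of Proposition~\ref{prop:triv-stab} under the action of $\alpha$. Take a stable representative $x_0 = \psi_\tbv(y)$ of a geometric point in $\Psi_\tbv(\fM(\tV^\tbv,\tW))$, so that $g_i^{x_0}=g_i^\tbv$ by the construction in the proof of Lemma~\ref{lem:induces}. Using that the scalar $\lambda\in\C^\times\subset G_W$ acts on $\Lambda(V,W)$ as the inverse of the diagonal scalar action of $\lambda$ via $G_V$, I would rewrite
\[
\theta(\alpha\cdot x_0)=\lambda\alpha\cdot\theta(x_0)=\lambda\alpha\cdot(g_i^\tbv)^{-1}\cdot x_0=(\lambda g_i^\tbv)^{-1}\cdot(\alpha\cdot x_0),
\]
and conclude from uniqueness in Proposition~\ref{prop:triv-stab} that $g_i^{\alpha\cdot x_0}=\lambda g_i^\tbv$. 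This immediately gives $\tau_i^{\alpha\cdot x_0}=\lambda^{d_i}\tau_i^\tbv$, whose $\zeta$-eigenspace is $V^\tbv_{i,\zeta\lambda^{-d_i}}$; thus the tuple $\tbv^{\alpha\cdot x_0}$ from the proof of Lemma~\ref{lem:surjective} coincides with the $\tbv'$ of the statement, and $\alpha\cdot x_0$ lies in $\Psi_{\tbv'}(\fM(\tV^{\tbv'},\tW))$.

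For part (2), I would write $\lambda=\eta^k$, so that $\ta^k$ acts on $\tI$ exactly as $(i,\zeta)\mapsto(i,\zeta\lambda^{d_i})$, and propose $\ta^k$ as the relevant power of $\ta$. Set $\tilde\sigma_{(i,\zeta)}:=\alpha_i|_{W_{i,\zeta}}:W_{i,\zeta}\to W_{\ta^k(i,\zeta)}$. Walking once around each $\langle\ta^k\rangle$-orbit, the condition analogous to~\eqref{eqn:sigma} telescopes to $\alpha_i^\bn|_{W_{i,\zeta}}=\id$, which is precisely the hypothesis $\alpha^\bn=\id_W$. For each $\langle\ta^k\rangle$-orbit in $\tI$ I would then pick arbitrary isomorphisms $\tilde\varphi_{(i,\zeta)}:V_{i,\zeta}^\tbv\to V_{i,\zeta\lambda^{d_i}}^\tbv$ satisfying the analog of~\eqref{eqn:varphi}; such choices exist because the hypothesis $v_{i,\zeta}=v_{i,\zeta\lambda^{d_i}}$ forces $\dim V_{i,\zeta}^\tbv$ to be constant along each orbit. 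A short check shows that $\ta^k$ is itself an admissible automorphism of $(\tI,\tH,\tOmega)$ preserving $\tOmega$, so this data does produce a diagram automorphism $\tilde\theta$ of $\fM(\tV^\tbv,\tW)$ in the sense of \S\ref{subsect:diag-aut}.

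The main obstacle, and the last step, is to show that $\tilde\theta$ coincides with the $\alpha$-action on $\fM(\tV^\tbv,\tW)$ transported through $\Psi$. I would apply the argument of Lemma~\ref{lem:surjective} to $\alpha\cdot\psi_\tbv(y)$: part~(1) pinpoints the new eigenspaces, and one can take the conjugating element $(h_i)\in G_V$ of~\eqref{eqn:change} to have off-diagonal blocks given by the $\tilde\varphi_{(i,\zeta)}$'s, extended to $i\notin\hI$ by the iterative recipe in the proof of Lemma~\ref{lem:surjective}. Comparing $\psi_\tbv^{-1}((h_i)\cdot\alpha\cdot\psi_\tbv(y))$ with $\tilde\theta(y)$ block by block using~\eqref{eqn:gamma-delta-special}--\eqref{eqn:b-general} reduces to a bookkeeping calculation: the $\Gamma$ and $\Delta$ matches are immediate from the definitions, while for $B_h$ with $h\in\Omega_1$ the prefactor $g_{a^{f(h)}(t(\hh))}^\tbv\varphi_{a^{f(h)-1}(t(\hh))}\cdots g_{a(t(\hh))}^\tbv\varphi_{t(\hh)}$ in~\eqref{eqn:b-special} picks up an overall scalar $\lambda^{f(h)}$ when $g^\tbv$ is replaced by $\lambda g^\tbv$, and the hypothesis $\lambda^{f(h)}=1$ for $h\in\Omega_1$ is precisely what is needed to absorb this spurious scalar and align the $B_h$-transformations of $\alpha$ with those of $\tilde\theta$.
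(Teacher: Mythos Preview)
Your proposal is correct and follows essentially the same route as the paper: for part~(1) you compute $g_i^{\alpha\cdot x}=\lambda g_i^{\tbv}$ and read off the shifted eigenspaces exactly as the paper does, and for part~(2) you set $\lambda=\eta^k$, take $\tilde\sigma_{(i,\zeta)}=\alpha_i|_{W_{i,\zeta}}$ and arbitrary $\tilde\varphi_{(i,\zeta)}$, then verify $\psi_{\tbv}(\tilde\theta(y))$ and $\alpha\cdot\psi_{\tbv}(y)$ differ by an element of $G_V$, with the hypothesis $\lambda^{f(h)}=1$ absorbing the stray scalar from the prefactor in~\eqref{eqn:b-special}. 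The only cosmetic difference is that the paper writes down the extension of your $(h_i)$ to $i\notin\hI$ by the closed formula $g_{a(i)}=\lambda^{-1}(g_{a(i)}^{\tbv}\varphi_i)g_i(g_{a(i)}^{\tbv}\varphi_i)^{-1}$ rather than invoking the recipe of Lemma~\ref{lem:surjective}.
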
 

\begin{proof}
Take $\tbv=(v_{i,\zeta})\in\cD(\bv)$, and suppose $x\in\Lambda(V,W)^s$ is such that $G_V\cdot x$ belongs to $\Psi(\fM(\tV^{\tbv},\tW))$. As seen in the proof of Lemma~\ref{lem:surjective}, $v_{i,\zeta}=\dim V_{i,\zeta}^x$ for all $(i,\zeta)\in\tI$. By definition we have $g_i^{\alpha\cdot x}=\lambda g_i^x$ and $\tau_i^{\alpha\cdot x}=\lambda^{d_i}\tau_i^x$, which implies that $V_{i,\zeta}^{\alpha\cdot x}=V_{i,\zeta\lambda^{-d_i}}^x$. Part (1) follows.

Under the assumptions of part (2), we let $m$ be such that $\lambda=\eta^m$ where $\eta$ is the primitive $\bn$th root of $1$ used to define $\ta$. Then $\ta^m(i,\zeta)=(i,\zeta\lambda^{d_i})$ for all $(i,\zeta)\in\tI$, and we aim to construct a diagram automorphism of $\fM(\tV^{\tbv},\tW)$ relative to $\ta^m$ (and keeping the same $\bn$) that coincides with the action of $\alpha$. We choose our isomorphisms $\varphi_{i,\zeta}:V_{i,\zeta}\simto V_{i,\zeta\lambda^{d_i}}$ arbitrarily so that the analogue of~\eqref{eqn:varphi} holds. We define our isomorphisms $\sigma_{i,\zeta}:W_{i,\zeta}\simto W_{i,\zeta\lambda^{d_i}}$ by $\sigma_{i,\zeta}=\alpha_i|_{W_{i,\zeta}}$. Then the analogue of~\eqref{eqn:sigma} holds because of the assumption that $\alpha^\bn=\mathrm{id}_W$. Let $\widetilde{\theta}$ denote the resulting diagram automorphism of $\bM(\tV^{\tbv},\tW)$ and $\fM(\tV^{\tbv},\tW)$.

We can define $(g_i)\in G_V$ uniquely so that $g_i|_{V_{i,\zeta}}=\varphi_{i,\zeta}$ for all $(i,\zeta)\in\tI$ and
\begin{equation}
g_{a(i)}=\lambda^{-1}(g_{a(i)}^{\tbv}\varphi_i)g_i(g_{a(i)}^{\tbv}\varphi_i)^{-1}\text{ for all }i\in I.
\end{equation} 
It is straightforward to check from the definition of $\psi_{\tbv}$ given in \S\ref{subsect:proof} that
\begin{equation}
\psi_{\tbv}(\widetilde{\theta}(\tB_{\th},\tGamma_{i,\zeta},\tDelta_{i,\zeta}))=\alpha\cdot((g_i)\cdot\psi_{\tbv}(\tB_{\th},\tGamma_{i,\zeta},\tDelta_{i,\zeta}))
\end{equation}
for all $(\tB_{\th},\tGamma_{i,\zeta},\tDelta_{i,\zeta})\in\bM(\tV^{\tbv},\tW)$. (The assumption that $\lambda^{f(h)}=1$ for all $h\in\Omega_1$ is needed because of the form of the definition~\eqref{eqn:b-special}.) Hence the action of $\alpha$ on $\fM(\tV^{\tbv},\tW)$ coincides with $\widetilde{\theta}$. 
\end{proof}

\subsection{An alternative proof}
\label{subsect:nakajima}

We conclude this section by outlining an alternative proof of Theorem~\ref{thm:fixed-points}, based on a suggestion of Nakajima, which applies when the underlying graph $(I,H)$ is of simply-laced affine Dynkin type, $a$ is an admissible automorphism of order $2$ or $3$ that fixes at least one vertex, and $\bn$ is its order. These assumptions are equivalent to saying that $(I,H,a,\bn)$ arises from a pair $(\Gamma,\Gamma')$ of subgroups of $SL_2(\C)$ as in \S\ref{subsect:McKay}. 

In particular, one could take $(I,H)$ to be of type $\widetilde{\A}_{2n-1}$ (for $n\geq 2$) and $a$ to be the involution that fixes the extra vertex $0$ and is the usual diagram involution of the sub-diagram of type $\A_{2n-1}$, as in Example~\ref{ex:affinetypea-typed}. Then the split-quotient quiver is of type $\widetilde{\D}_{n+2}$. From this case of Theorem~\ref{thm:fixed-points}, one recovers Theorem~\ref{thm:fixed-points-AtoD} simply by appending zero vector spaces at the extra vertices. A similar argument applies to the other finite Dynkin cases of Theorem~\ref{thm:fixed-points}.

Let $\Gamma$ be any finite subgroup of $SL_2(\C)$. As in~\cite[Section 2]{vv}, one can define a quiver variety $\fM^\Gamma(\cV,\cW)$ for any pair of finite-dimensional representations $\cV$ and $\cW$ of $\Gamma$. This is essentially a special case (indeed, the motivating case; see~\cite{nak1}) of the definition of Nakajima recalled in \S\ref{subsect:quiver-vars}, where $(I,H,\Omega)$ is an orientation of the affine Dynkin diagram $\widetilde{\Delta}_\Gamma$, and $V_i$ and $W_i$ are the multiplicity spaces in $\cV$ and $\cW$ respectively of the irreducible representation of $\Gamma$ labelled by $i$. We say ``essentially'' because there are two caveats. One is that the $\Gamma=\{1\}$ case was excluded by our former conventions, since it corresponds to the graph with a single vertex and a loop (the so-called $\widetilde{\A}_0$ diagram). The other is that one must make some changes of sign in the coordinates, depending on the orientation $\Omega$, to match up the equation~\eqref{eqn:adhm} with the equations in~\cite{vv}; these sign changes have no effect on the quiver variety.

For any representation $\cW$ of $\Gamma$ and any $n\in\N$, one has a natural action of $\Gamma$ on $\fM^{\{1\}}(\C^n,\cW)$. The key fact is the following description of the $\Gamma$-fixed subvariety: 
\begin{equation} \label{eqn:nakajima}
\fM^{\{1\}}(\C^n,\cW)^\Gamma\cong\coprod_{\substack{\cV\\\dim\cV=n}}\fM^\Gamma(\cV,\cW),
\end{equation} 
where on the right-hand side the disconnected union is over representatives of the isomorphism classes of $n$-dimensional representations of $\Gamma$. Note the formal similarity between~\eqref{eqn:nakajima} and Theorem~\ref{thm:fixed-points}. One can give a proof of~\eqref{eqn:nakajima} along very similar lines to the above proof of Theorem~\ref{thm:fixed-points}, defining a morphism from the right-hand side to the left-hand side and checking injectivity and surjectivity; the argument is perhaps most transparent if one uses the interpretations of both sides in terms of moduli spaces of torsion-free sheaves on $\mathbb{P}^2$ as in~\cite[Theorem 1]{vv}.

Granting~\eqref{eqn:nakajima}, the proof of Theorem~\ref{thm:fixed-points} in the cases arising from a pair $(\Gamma,\Gamma')$ is as follows. First suppose that $(I,H)=\widetilde{\Delta}_\Gamma$ with the automorphism $a$ as in (1) of \S\ref{subsect:McKay}, so that we are considering an automorphism $\theta$ of a quiver variety $\fM^\Gamma(\cV,\cW)$. The existence of the isomorphisms $\varphi_i$ as in \S\ref{subsect:diag-aut} amounts to the existence of a representation $\cV'$ of $\Gamma'$ such that $\Res_\Gamma^{\Gamma'}(\cV')=\cV$, and the choice of the isomorphisms $\sigma_i$ amounts to the choice of a representation $\cW'$ of $\Gamma'$ such that $\Res_\Gamma^{\Gamma'}(\cW')=\cW$. Moreover, one can check that the resulting automorphism $\theta$ of $\fM^\Gamma(\cV,\cW)$ corresponds under~\eqref{eqn:nakajima} to the action of a generator of $\Gamma'/\Gamma$ on the appropriate connected component of $\fM^{\{1\}}(\C^{\dim\cV},\cW')^{\Gamma}$. Taking the trivial equation $\fM^{\{1\}}(\C^{\dim\cV},\cW')^{\Gamma'}=(\fM^{\{1\}}(\C^{\dim\cV},\cW')^\Gamma)^{\Gamma'/\Gamma}$ and using~\eqref{eqn:nakajima}, one obtains
\begin{equation} \label{eqn:fixed-nakajima}
\fM^{\Gamma}(\cV,\cW)^{\theta}\cong\coprod_{\substack{\cV'\\ \Res_\Gamma^{\Gamma'}(\cV')=\cV}}\fM^{\Gamma'}(\cV',\cW'),
\end{equation}
where on the right-hand side the disconnected union is over representatives of the isomorphism classes of representations $\cV'$ of $\Gamma'$ such that $\Res_\Gamma^{\Gamma'}(\cV')=\cV$. This is exactly what Theorem~\ref{thm:fixed-points} says in this case.

Now suppose that $(I,H)=\widetilde{\Delta}_{\Gamma'}$ with the automorphism $a'$ as in (2) of \S\ref{subsect:McKay}, so that we are considering an automorphism $\theta'$ of a quiver variety $\fM^{\Gamma'}(\cV',\cW')$. Let $\cV=\Res_\Gamma^{\Gamma'}(\cV')$ and $\cW=\Res_\Gamma^{\Gamma'}(\cW')$; then we have an automorphism $\theta$ of $\fM^\Gamma(\cV,\cW)$ such that $\fM^{\Gamma'}(\cV',\cW')$ is isomorphic to a connected component of $\fM^\Gamma(\cV,\cW)^\theta$, as seen in~\eqref{eqn:fixed-nakajima}. The existence of the isomorphisms $\varphi_i$ in the definition of $\theta'$ amounts to the existence of a representation $\overline{\cV}$ of $\Gamma$ such that $\Ind_\Gamma^{\Gamma'}(\overline{\cV})=\cV'$, and the choice of the isomorphisms $\sigma_i$ amounts to the choice of a representation $\overline{\cW}$ of $\Gamma$ such that $\Ind_\Gamma^{\Gamma'}(\overline{\cW})=\cW'$. The representations of $\Gamma$ on $\cV=\Res_\Gamma^{\Gamma'}(\Ind_\Gamma^{\Gamma'}(\overline{\cV}))$ and $\cW=\Res_\Gamma^{\Gamma'}(\Ind_\Gamma^{\Gamma'}(\overline{\cW}))$ extend to $\Gamma\times\widehat{\Gamma'/\Gamma}$, and hence we get an action of $\widehat{\Gamma'/\Gamma}$ on $\fM^\Gamma(\cV,\cW)$ that commutes with the automorphism $\theta$. One can check that the automorphism $\theta'$ of $\fM^{\Gamma'}(\cV',\cW')$ corresponds under~\eqref{eqn:fixed-nakajima} to the action of a generator of $\widehat{\Gamma'/\Gamma}$ on the appropriate connected component of $\fM^{\Gamma}(\cV,\cW)^{\theta}$. By the easy special case of Theorem~\ref{thm:fixed-points} mentioned in Example~\ref{ex:silly}, we have
\begin{equation}
\fM^\Gamma(\cV,\cW)^{\widehat{\Gamma'/\Gamma}}\cong\coprod_{\substack{(\overline{\cV}_x)_{x\in\Gamma'/\Gamma}\\ \bigoplus_x \overline{\cV}_x=\cV}}\,\prod_{x\in\Gamma'/\Gamma}\fM^\Gamma(\overline{\cV}_x,\overline{\cW}),
\end{equation}
and the action of $\theta$ on the left-hand side corresponds to a cyclic place permutation of the tuples $(\overline{\cV}_x)$ on the right-hand side. Hence we get
\begin{equation}
(\fM^\Gamma(\cV,\cW)^{\theta})^{\widehat{\Gamma'/\Gamma}}\cong\coprod_{\substack{\overline{\cV}_1\\ \Res_\Gamma^{\Gamma'}(\Ind_\Gamma^{\Gamma'}(\overline{\cV}_1))=\cV}}\fM^\Gamma(\overline{\cV}_1,\overline{\cW}),
\end{equation}
from which we deduce
\begin{equation}
\fM^{\Gamma'}(\cV',\cW')^{\theta'}\cong\coprod_{\substack{\overline{\cV}_1\\ \Ind_\Gamma^{\Gamma'}(\overline{\cV}_1)=\cV'}}\fM^\Gamma(\overline{\cV}_1,\overline{\cW}).
\end{equation}
This is exactly what Theorem~\ref{thm:fixed-points} says in this case.


\section{The diagram involution in Maffei's description}
\label{sect:maffei}


In this section, we assume we are in the setting of Example~\ref{ex:typea-inv}, where $(I,H,\Omega)$ is a quiver of type $\A_{2n-1}$, $V_i=V_{2n-i}$ and $W_i=W_{2n-i}$ for all $i$, $\varphi_i$ is the identity for all $i$, and $\sigma_i$ is the identity for all $i\neq n$. We have a diagram involution $\theta$ of $\fM^{\A_{2n-1}}(V,W)$, depending on the choice of an involution $\sigma_n:W_n\to W_n$.

Maffei has shown in~\cite[Theorem 8]{maffei} that $\fM^{\A_{2n-1}}(V,W)$ is isomorphic to a resolution of the intersection of the Slodowy slice to a particular nilpotent orbit of $\fsl_D$ with the closure of another nilpotent orbit of $\fsl_D$, where $D=\sum_i iw_i$. This raises an obvious question: what involution of this Slodowy variety corresponds to the diagram involution $\theta$? We will answer this question, in a very special case, in Theorem~\ref{thm:involutions} and Corollary~\ref{cor:involutions}.

\subsection{Maffei's isomorphism in the small case}
\label{subsect:setup}

For the remainder of the section, we take $\bw$ to be of a special form: we assume that there is some $k\in\{1,2,\cdots,n\}$ such that
\begin{equation} \label{eqn:small}
w_i=\begin{cases}
0,&\text{ if $i\notin\{k,2n-k\}$,}\\
1,&\text{ if $k<n$ and $i\in\{k,2n-k\}$,}\\
2,&\text{ if $k=n$ and $i=n$.}
\end{cases}
\end{equation}
This assumption implies that $D=2n$, so that the Lie algebra arising in Maffei's isomorphism has the same type as the quiver, namely $\A_{2n-1}$. 

Note that the group $G_W$ is now either $GL(W_k)\times GL(W_{n-k})\cong \C^\times\times\C^\times$ (if $k<n$) or $GL(W_n)\cong GL_2$ (if $k=n$). In the $k<n$ case, $W_n=0$ so there is no choice for $\sigma_n$; in the $k=n$ case, there are (up to $G_W$-conjugacy) three choices for $\sigma_n$, namely $\sigma_n=\mathrm{id}_{W_n}$, $\sigma_n=-\mathrm{id}_{W_n}$, and $\sigma_n$ having eigenvalues $1$ and $-1$; the respective values of $(w_+,w_-)$ are $(2,0)$, $(0,2)$ and $(1,1)$. Also note that the defining equations~\eqref{eqn:quadratic} of $\Lambda^{\A_{2n-1}}(V,W)$ now take a simpler form, since $\Gamma_i=\Delta_i=0$ for $i\notin\{k,2n-k\}$.

In order to define $\theta$, we have already assumed that
\begin{equation} \label{eqn:v-symmetry}
v_{2n-i}=v_i\ \text{ for all }i,
\end{equation}
so $v_1,\cdots,v_n$ are enough to specify $\bv$. Define integers $s_1,s_2,\cdots,s_{n}$ by
\[
s_i=
\begin{cases}
1-v_1,&\text{ if $i=1$,}\\
1-v_i+v_{i-1},&\text{ if $2\leq i\leq k$,}\\
-v_i+v_{i-1},&\text{ if $k+1\leq i\leq n$.}
\end{cases}
\]
Let $\ell=|\{i\,|\,s_i\neq 0\}|$. By~\cite[Lemma 7]{maffei}, the quiver variety $\fM^{\A_{2n-1}}(V,W)$ is nonempty if and only if
\begin{equation} \label{eqn:nonempty}
s_i\in\{-1,0,1\}\text{ for all }i,\text{ and }\ell\leq k.
\end{equation}
We assume this henceforth.

\begin{rmk}
The assumptions~\eqref{eqn:small},~\eqref{eqn:v-symmetry}, and~\eqref{eqn:nonempty} together mean that the quiver variety $\fM^{\A_{2n-1}}(V,W)$ corresponds, in the setting of Remark~\ref{rmk:weight-space}, to a non-vanishing weight space of a small self-dual representation of $\fsl_{2n}$. Namely, using the usual identification of the root lattice of $\fsl_{2n}$ with 
\[ \{(a_1,\cdots,a_{2n})\in\Z^{2n}\,|\,a_1+\cdots+a_{2n}=0\}, \] 
it corresponds to the $(s_1,\cdots,s_n,-s_n,\cdots,-s_1)$-weight space of the irreducible representation with highest weight $(1,\cdots,1,0,\cdots,0,-1,\cdots,-1)$ ($k$ ones). 
\end{rmk} 

In Maffei's construction, the $2n$-dimensional vector space on which the Lie algebra $\fsl_{2n}$ is based is
\[
\tV_0:=\begin{cases}
\underbrace{W_k\oplus\cdots\oplus W_k}_{\text{($k$ copies)}}\oplus \underbrace{W_{2n-k}\oplus\cdots\oplus W_{2n-k}}_{\text{($2n-k$ copies)}},&\text{ if $k<n$,}\\
\underbrace{W_n\oplus\cdots\oplus W_n}_{\text{($n$ copies)}},&\text{ if $k=n$.}
\end{cases}
\]
The relevant Slodowy slice is defined using an $\fsl_2$-triple $\{E_0,H_0,F_0\}\subset\fsl(\tV_0)$, specified as follows.

Given any associative $\C$-algebra $R$ with unit element $1_R$ and any nonnegative integer $m$, we define three $(m+1)\times(m+1)$ matrices with entries in $R$:
\[
\begin{split}
E^{[m+1]}&=\begin{pmatrix}0&1_R&0&\cdots&0&0\\
0&0&1_R&\cdots&0&0\\
0&0&0&\cdots&0&0\\
\vdots&\vdots&\vdots&\ddots&\vdots&\vdots\\
0&0&0&\cdots&0&1_R\\
0&0&0&\cdots&0&0
\end{pmatrix},\\
H^{[m+1]}&=
\begin{pmatrix}m\cdot 1_R&0&\cdots&0&0\\
0&(m-2)\cdot 1_R&\cdots&0&0\\
\vdots&\vdots&\ddots&\vdots&\vdots\\
0&0&\cdots&(-m+2)\cdot 1_R&0\\
0&0&\cdots&0&-m\cdot 1_R
\end{pmatrix},\\
F^{[m+1]}&=\begin{pmatrix}0&0&0&\cdots&0&0\\
m\cdot 1_R&0&0&\cdots&0&0\\
0&2(m-1)\cdot 1_R&0&\cdots&0&0\\
\vdots&\vdots&\vdots&\ddots&\vdots&\vdots\\
0&0&0&\cdots&0&0\\
0&0&0&\cdots&m\cdot 1_R&0
\end{pmatrix}.
\end{split}
\]
where the scalars on the sub-diagonal in $F^{[m+1]}$ are the numbers $i(m+1-i)$ for $1\leq i\leq m$. It is well known that these matrices satisfy the $\fsl_2$ commutation relations.

When $k=n$, we take $R=\End(W_n)$ and let $(E_0,H_0,F_0)=(E^{[n]},H^{[n]},F^{[n]})$. When $k<n$, we take $R=\End(W_k)=\End(W_{2n-k})=\C$ and let
\[
(E_0,H_0,F_0)=\left(\begin{pmatrix}E^{[k]}&0\\0&E^{[2n-k]}\end{pmatrix},\begin{pmatrix}H^{[k]}&0\\0&H^{[2n-k]}\end{pmatrix},\begin{pmatrix}F^{[k]}&0\\0&F^{[2n-k]}\end{pmatrix}\right),
\]
where the diagonal blocks correspond respectively to the $W_k$ summands and to the $W_{2n-k}$ summands of $\tV_0$.

Clearly $E_0$ has Jordan type $(2n-k,k)$ in either case. The Slodowy slice associated to the $\fsl_2$-triple $(E_0,H_0,F_0)$ in the nilpotent cone of $\fsl(\tV_0)$ is by definition
\begin{equation}
\cS^{\A_{2n-1}}_{(2n-k,k)}=\{X\in\fsl(\tV_0)\,|\,[X-E_0,F_0]=0,\,X\text{ nilpotent}\}.
\end{equation}
(We will only consider this nilpotent part, never the full Slodowy slice.) We refer to $E_0$ as the base-point of the slice.
Since $\ell\leq k$, $E_0$ belongs to the closure $\overline{\cO}^{\A_{2n-1}}_{(2n-\ell,\ell)}$ of the orbit of nilpotent elements of $\fsl(\tV_0)$ with Jordan type $(2n-\ell,\ell)$. The intersection $\cS^{\A_{2n-1}}_{(2n-k,k)}\cap\overline{\cO}^{\A_{2n-1}}_{(2n-\ell,\ell)}$ is a transverse slice to the orbit of $E_0$ in $\overline{\cO}^{\A_{2n-1}}_{(2n-\ell,\ell)}$ by~\cite[Section 7.4]{slod}.

Let $\cF$ be the variety of partial flags $0=U_0\subseteq U_1\subseteq\cdots\subseteq U_{2n}=\tV_0$ such that 
\begin{equation} \label{eqn:partial-flag}
\dim U_i-\dim U_{i-1}=
\begin{cases}
s_i+1,&\text{ if $1\leq i\leq n$,}\\
-s_{2n+1-i}+1,&\text{ if $n+1\leq i\leq 2n$.}
\end{cases} 
\end{equation}
By~\eqref{eqn:nonempty}, the right-hand side of~\eqref{eqn:partial-flag} takes the value $2$ on $\ell$ occasions, $1$ on $2n-2\ell$ occasions, and $0$ on $\ell$ occasions. We have the usual expression for the cotangent bundle of $\cF$:
\begin{equation}
T^*\cF=\{(X,(U_i))\in\fsl(\tV_0)\times\cF\,|\,X(U_i)\subseteq U_{i-1}\text{ for all }i\}.
\end{equation}
Projection onto the first factor gives a resolution of singularities $\mu:T^*\cF\to\overline{\cO}^{\A_{2n-1}}_{(2n-\ell,\ell)}$. We can then consider the Slodowy variety $\mu^{-1}(\cS^{\A_{2n-1}}_{(2n-k,k)}\cap\overline{\cO}^{\A_{2n-1}}_{(2n-\ell,\ell)})$.

Note that the common centralizer $Z_{GL(\tV_0)}(E_0,H_0,F_0)$ of $E_0,H_0,F_0$ in $GL(\tV_0)$ can be identified with $G_W$, acting diagonally on the copies of $W_k$ and $W_{2n-k}$ (in the $k<n$ case) or on the copies of $W_n$ (in the $k=n$ case). By definition, $Z_{GL(\tV_0)}(E_0,H_0,F_0)$ acts on $\cS^{\A_{2n-1}}_{(2n-k,k)}\cap\overline{\cO}^{\A_{2n-1}}_{(2n-\ell,\ell)}$ and $\mu^{-1}(\cS^{\A_{2n-1}}_{(2n-k,k)}\cap\overline{\cO}^{\A_{2n-1}}_{(2n-\ell,\ell)})$, and the map $\mu$ is $Z_{GL(\tV_0)}(E_0,H_0,F_0)$-equivariant.

\begin{prop}[A special case of~{\cite[Theorem 8]{maffei}}] \label{prop:maffei1}
There are isomorphisms $\Phi$ and $\Phi_1$ making the following diagram commute:
\[
\xymatrix{
\fM^{\A_{2n-1}}(V,W)\ar[r]^-{\Phi}_-{\sim}\ar[d]_-{\pi}&\mu^{-1}(\cS^{\A_{2n-1}}_{(2n-k,k)}\cap\overline{\cO}^{\A_{2n-1}}_{(2n-\ell,\ell)})\ar[d]^-{\mu}\\
\fM_1^{\A_{2n-1}}(V,W)\ar[r]^-{\Phi_1}_-{\sim}&\cS^{\A_{2n-1}}_{(2n-k,k)}\cap\overline{\cO}^{\A_{2n-1}}_{(2n-\ell,\ell)}
}
\]
Moreover, $\Phi_1([0])=E_0$, and the action of $G_W$ on the domains of $\Phi$ and $\Phi_1$ corresponds to the action of $Z_{GL(\tV_0)}(E_0,H_0,F_0)$ on their codomains.
\end{prop}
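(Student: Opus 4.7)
The plan is to extract this proposition from Maffei's general Theorem 8 in \cite{maffei} by specializing his parameters to our small self-dual setting and verifying that the resulting Slodowy variety is precisely the one written above. Maffei's theorem attaches to any nonempty type-A quiver variety $\fM^{\A_{N-1}}(\bv,\bw)$ an isomorphism with a Springer-type resolution $\mu^{-1}(\cS\cap\overline{\cO})$ inside $\fsl_D$ for $D=\sum_i iw_i$, together with the analogous isomorphism on affine varieties, so the substance of the argument is combinatorial: match partitions, identify the base-point and the flag variety, and check functoriality in $W$.

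First I would identify the base-point and Slodowy slice. Under \eqref{eqn:small} one has $D=2n$ in all sub-cases, and Maffei's base nilpotent has Jordan type obtained by taking $w_i$ parts of size $i$; this yields the partition $(2n-k,k)$ whether $k<n$ (one part of size $k$ and one of size $2n-k$) or $k=n$ (two parts of size $n$). The block structure of Maffei's standard $\fsl_2$-triple, built from the $E^{[m+1]}, H^{[m+1]}, F^{[m+1]}$ matrices, then coincides on the nose with our $(E_0,H_0,F_0)$. Hence Maffei's slice is literally $\cS^{\A_{2n-1}}_{(2n-k,k)}$, and his partial flag variety of the appropriate type matches the $\cF$ specified by the dimension jumps \eqref{eqn:partial-flag}.

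Next I would verify the ambient orbit. Maffei's theorem determines the enclosing nilpotent orbit via a partition of $2n$ built from the second differences of $\bv$ shifted by $\bw$, or equivalently from the integers $s_i$ of \S\ref{subsect:setup}. The hypothesis \eqref{eqn:nonempty} forces $s_i\in\{-1,0,1\}$ with exactly $\ell$ nonzero values, and the symmetry \eqref{eqn:v-symmetry} then forces Maffei's partition to be $(2n-\ell,\ell)$, so that the ambient orbit closure is $\overline{\cO}^{\A_{2n-1}}_{(2n-\ell,\ell)}$. This is the one step that requires combinatorial care and would be the main, albeit routine, obstacle in the translation; it is substantially simplified by the smallness assumption, which forces all parts of the partition to belong to $\{0,1,2\}$.

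The remaining assertions are formal consequences of Maffei's construction. The map $\Phi_1$ sends $[0]$ to $E_0$ because the origin in $\Lambda^{\A_{2n-1}}(V,W)$ corresponds under Maffei's explicit formulas to the base-point of the Slodowy slice, and the square commutes because Maffei's construction of $\Phi$ factors through $\Phi_1$ by design, intertwining the canonical projection $\pi$ with the Springer map $\mu$. Finally, the $G_W$-equivariance follows because $G_W$ acts on $\tV_0$ diagonally on the copies of $W_k$ and $W_{2n-k}$ (or the copies of $W_n$), and this diagonal copy is precisely the centralizer $Z_{GL(\tV_0)}(E_0,H_0,F_0)$ under Maffei's identification.
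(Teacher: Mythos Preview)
Your proposal is correct and aligns with the paper's treatment: the paper does not give a separate proof of this proposition but simply records it as a direct specialization of Maffei's Theorem~8, adding only the remark that the $G_W$-equivariance, while not stated in \cite{maffei}, is evident from the explicit definition of $\Phi$ and $\Phi_1$. Your outline of how to match the partitions $(2n-k,k)$ and $(2n-\ell,\ell)$, the $\fsl_2$-triple, and the flag variety to Maffei's general data is exactly the kind of verification implicit in such a citation.
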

\noindent
The statement about group actions is not explicitly mentioned in~\cite{maffei}, but is evident from the definition of the isomorphisms $\Phi$ and $\Phi_1$ (which Maffei calls $\widetilde{\varphi}$ and $\varphi_1$ respectively). We will recall the definition of $\Phi_1$ in our case below.

\begin{rmk} \label{rmk:sign}
There is a minor caveat: in~\cite[Definition 4]{maffei}, Maffei implicitly uses the orientation for the quiver of type A$_{2n-1}$ consisting of the edges from $i$ to $i+1$ for $1\leq i\leq 2n-2$. But this makes no difference to the quiver varieties. Indeed, one can translate from his orientation to ours simply by introducing some minus signs in the coordinates of $\bM^{\A_{2n-1}}(V,W)$: specifically, we will let our $B_{n,n+1}$ be his $-B_n$, our $B_{n+1,n+2}$ be his $-B_{n+1}$, and so on.
\end{rmk}

\subsection{Statement of the result} 
\label{subsect:form}

The main result of this section asserts that under Maffei's isomorphism $\Phi_1$ the involution $\theta$ of $\fM_1^{\A_{2n-1}}(V,W)$ corresponds to an involution $\Theta$ of $\cS^{\A_{2n-1}}_{(2n-k,k)}\cap\overline{\cO}^{\A_{2n-1}}_{(2n-\ell,\ell)}$ that arises naturally from a symmetric or skew-symmetric nondegenerate bilinear form $(\cdot,\cdot)$ on $\tV_0$. We now define this form.

First suppose that $k<n$. Fix a nondegenerate symmetric bilinear form $(\cdot,\cdot)_k=(\cdot,\cdot)_{2n-k}$ on $W_k=W_{2n-k}$ (this is unique up to scalar, since $\dim W_k=1$). Define the nondegenerate bilinear form $(\cdot,\cdot)$ on $\tV_0$ by:
\begin{equation}
\begin{split}
&((w_k^{(1)},\cdots,w_k^{(k)},w_{2n-k}^{(1)},\cdots,w_{2n-k}^{(2n-k)}),(y_k^{(1)},\cdots,y_k^{(k)},y_{2n-k}^{(1)},\cdots,y_{2n-k}^{(2n-k)}))\\
&=
\sum_{i=1}^{k}(-1)^{i-1}(w_k^{(i)},y_k^{(k+1-i)})_k+\sum_{i=1}^{2n-k}(-1)^{n-k+i}(w_{2n-k}^{(i)},y_{2n-k}^{(2n-k+1-i)})_{2n-k}.
\end{split}
\end{equation}
Here $w_j^{(i)},y_j^{(i)}\in W_j$ for $j\in\{k,2n-k\}$ and $1\leq i\leq j$.
The form $(\cdot,\cdot)$ is clearly symmetric if $k$ is odd and skew-symmetric if $k$ is even. 

Now suppose that $k=n$. Fix a nondegenerate skew-symmetric bilinear form $\langle\cdot,\cdot\rangle_n$ on $W_n$ (this is unique up to scalar, since $\dim W_n=2$). Let $(\cdot,\cdot)_n$ be the nondegenerate bilinear form on $W_n$ defined by $(w,y)_n=\langle w,\sigma_n(y)\rangle_n$. Note that $(\cdot,\cdot)_n$ is symmetric if $\sigma_n$ has eigenvalues $1$ and $-1$, and is skew-symmetric if $\sigma_n=\pm\mathrm{id}_{W_n}$.
Define the nondegenerate bilinear form $(\cdot,\cdot)$ on $\tV_0$ by:
\begin{equation}
((w_n^{(1)},\cdots,w_n^{(n)}),(y_1^{(1)},\cdots,y_n^{(n)}))=
\sum_{i=1}^{n}(-1)^{i-1}(w_n^{(i)},y_n^{(n+1-i)})_n.
\end{equation}   
Here $w_n^{(i)},y_n^{(i)}\in W_n$ for $1\leq i\leq n$.
The form $(\cdot,\cdot)$ is the same type as $(\cdot,\cdot)_n$ if $n$ is odd (i.e.\ they are either both symmetric or both skew-symmetric), and the opposite type to $(\cdot,\cdot)_n$ if $n$ is even.

In either the $k<n$ or the $k=n$ case, we define a Lie algebra involution $\Theta$ of $\fsl(\tV_0)$ as the negative transpose map with respect to the form $(\cdot,\cdot)$. In other words, it is defined by the property that
\begin{equation}
(Xw,y)+(w,\Theta(X)y)=0,\text{ for all }X\in\fsl(\tV_0),\ w,y\in \tV_0.
\end{equation} 
The fixed-point subalgebra $\fsl(\tV_0)^\Theta$ is isomorphic to $\fso_{2n}$ if $(\cdot,\cdot)$ is symmetric, or to $\fsp_{2n}$ if $(\cdot,\cdot)$ is skew-symmetric. It is immediate from the definitions that $E_0,H_0,F_0\in\fsl(\tV_0)^\Theta$, so $\Theta$ preserves the Slodowy slice $\cS^{\A_{2n-1}}_{(2n-k,k)}$. It is well known that $\Theta$ preserves each nilpotent orbit of $\fsl(\tV_0)$, so we have an induced involution $\Theta$ of the variety $\cS^{\A_{2n-1}}_{(2n-k,k)}\cap\overline{\cO}^{\A_{2n-1}}_{(2n-\ell,\ell)}$.

\begin{thm} \label{thm:involutions}
Continue the above assumptions, in particular~\eqref{eqn:small},~\eqref{eqn:v-symmetry},~\eqref{eqn:nonempty}.
The involution $\Theta$ of $\cS^{\A_{2n-1}}_{(2n-k,k)}\cap\overline{\cO}^{\A_{2n-1}}_{(2n-\ell,\ell)}$ corresponds, under the isomorphism $\Phi_1$ of Proposition~\ref{prop:maffei1}, to the diagram involution $\theta$ of $\fM_1(V,W)$.
\end{thm}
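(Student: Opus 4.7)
My plan is to establish the identity $\Phi_1\circ\theta=\Theta\circ\Phi_1$ on geometric points, by verifying it on a stable representative $x=(B_h,\Gamma_i,\Delta_i)\in\Lambda^{\A_{2n-1}}(V,W)^s$ of each $G_V$-orbit. The approach relies on unwinding Maffei's recursive construction of $\Phi_1$ in the symmetric setting of~\eqref{eqn:small} and~\eqref{eqn:v-symmetry}. Recall that in that construction the matrix $X:=\Phi_1(x)\in\fsl(\tV_0)$ has a block form with respect to the grading $\tV_0=\bigoplus_{i=1}^{k}W_k^{(i)}\oplus\bigoplus_{j=1}^{2n-k}W_{2n-k}^{(j)}$ (respectively $\bigoplus_{i=1}^{n}W_n^{(i)}$ when $k=n$), and each block is defined by a formula involving products of the $B_h$'s along paths in the $\A_{2n-1}$ quiver, together with the maps $\Gamma_k,\Delta_k,\Gamma_{2n-k},\Delta_{2n-k}$ (or $\Gamma_n,\Delta_n$ when $k=n$) at the endpoints.

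First I would rewrite $\Theta$ in this block form: taking negative transpose with respect to $(\cdot,\cdot)$ sends the block $X_{ij}\colon W^{(j)}\to W^{(i)}$ to a signed copy of the block in position $(k+1-j,k+1-i)$, where the sign is determined by the $(-1)^{i-1}$ and $(-1)^{n-k+i}$ factors appearing in the definition of $(\cdot,\cdot)$. In parallel, I would compute the block form of $\Phi_1(\theta(x))$: since $\theta$ replaces each $B_h$ by $B_{a(h)}$ (with identities as the $\varphi_i$) and conjugates $\Gamma_n,\Delta_n$ by $\sigma_n$, each path in Maffei's formula is replaced by its reflection through the middle vertex $n$, and this reflected path, after the reindexing $i\leftrightarrow 2n-i$, is precisely the reversed path between the reflected endpoints. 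Matching the two constructions then reduces, block by block, to a sign bookkeeping (and, in the $k=n$ case, a $\sigma_n$-twist bookkeeping).

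The main obstacle is exactly this bookkeeping. Two sign sources must cancel: the alternating signs in $(\cdot,\cdot)$, and the minus signs at vertex $n$ in the quiver relations~\eqref{eqn:quadratic}, which propagate through Maffei's recursion and produce a sign at each ``turn'' of a path at vertex $n$. The signs chosen in the definition of $(\cdot,\cdot)$ are precisely calibrated to absorb these; similarly, the $\sigma_n$-twisted form $(w,y)_n=\langle w,\sigma_n(y)\rangle_n$ in the $k=n$ case absorbs the $\sigma_n$ factor arising at each traversal of vertex $n$. I would execute the sign computation by induction on the length of the relevant path, using Maffei's recursion to reduce a length-$\ell$ instance to a length-$(\ell-1)$ instance; the base case, that $E_0,H_0,F_0\in\fsl(\tV_0)^{\Theta}$, is immediate from the definitions of $(\cdot,\cdot)$ and of $E^{[m+1]},H^{[m+1]},F^{[m+1]}$. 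Since both $\theta$ and $\Theta$ are involutions and $\Phi_1$ is an isomorphism, verifying the identity on stable representatives implies it on all of $\fM_1^{\A_{2n-1}}(V,W)$.
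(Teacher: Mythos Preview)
Your outline misses the substantive algebraic step that makes the theorem work, and treats it as bookkeeping that can be done by induction on path length; it cannot.

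Concretely: after applying $\theta$, the path product $\Delta_n B_{n,n-j+1,n}\Gamma_n$ (which goes \emph{left} from vertex $n$ and back) becomes $\sigma_n\Delta_n B_{n,n+j-1,n}\Gamma_n\sigma_n$ (which goes \emph{right} and back).  On the Slodowy side, applying $\Theta$ amounts to a signed transpose in $\End(W_n)$ with respect to $(\cdot,\cdot)_n$, so one needs the identity
\[
(\Delta_n B_{n,n-j+1,n}\Gamma_n)^{\bt}=(-1)^{j}\sigma_n\Delta_n B_{n,n+j-1,n}\Gamma_n\sigma_n
\]
(and the analogous four identities when $k<n$).  These are \emph{not} sign identities between the same path read two ways: the left and right excursions involve disjoint sets of $B_h$'s, and nothing local relates them.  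The paper establishes these identities (Lemmas~\ref{lem:sums} and~\ref{lem:victory}) by packaging \emph{all} of the quiver relations into a single equation $BA-AB=\Gamma\Delta$, deducing that two generating series $X(z),Y(z)\in\End(W)[z]$ are mutually inverse (Lemma~\ref{lem:inverses}), and then using the hypothesis $\dim W_k=\dim W_{2n-k}=1$ (resp.\ $\dim W_n=2$) to conclude $\det X(z)=1$, whence the entrywise identities follow from Cramer's rule.  Your induction on path length cannot produce this: the identity at length $j$ does not reduce to the identity at length $j-1$ without invoking the relations at \emph{every} vertex simultaneously, and the small-dimension hypothesis~\eqref{eqn:small} is essential, not merely convenient.

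A second issue is your description of Maffei's $\Phi_1$.  The blocks of $\Phi_1(x)$ are not individual path products; they are polynomials in the path products $r_j$, determined by the recursion (A)--(C) in \S\ref{subsect:recursion}.  The paper therefore also needs Lemma~\ref{lem:parameters} (resp.~\ref{lem:parametersk<n}), which records how the recursion transforms under $r_j\mapsto\lambda^j r_j$ and $r_j\mapsto r_j^*$; this is what converts the parameter identity above into the block identity $\Theta(M_nN_n)=M_n'N_n'$.  Your plan of matching ``block by block'' via single paths does not engage with this recursion at all.
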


We will prove the $k=n$ case of Theorem~\ref{thm:involutions} in \S\ref{subsect:proofk=n} and the $k<n$ case in \S\ref{subsect:proofk<n}. We will then deduce an analogous result for the isomorphism $\Phi$ in Corollary~\ref{cor:involutions}.

As a first indication of how vital assumption~\eqref{eqn:small} is to Theorem~\ref{thm:involutions}, we note a more obvious relationship between $\theta$ and the form $(\cdot,\cdot)$. Recall that we have identified $G_W$ with $Z_{GL(\tV_0)}(E_0,H_0,F_0)$. Also recall the subgroup $G_W^{\theta\sim}\subset G_W$ introduced in \S\ref{subsect:lagrangian} whose identity component is $G_W^\theta$, and let $G_{(\cdot,\cdot)}$ be the subgroup of $GL(\tV_0)$ consisting of elements $g$ for which there exists some $\lambda\in\C^\times$ such that
\begin{equation}
(gw,gy)=\lambda(w,y)\text{ for all }w,y\in\tV_0.
\end{equation}
Thus, $G_{(\cdot,\cdot)}$ is the group usually denoted $GO(\tV_0)$ if $(\cdot,\cdot)$ is symmetric or $GSp(\tV_0)$ if $(\cdot,\cdot)$ is skew-symmetric. 

\begin{prop} \label{prop:reductive}
Under the assumption~\eqref{eqn:small}, $G_W^{\theta\sim}=Z_{G_{(\cdot,\cdot)}}(E_0,H_0,F_0)$. If $k<n$, this group has two connected components and its identity component is isomorphic to $\C^\times$. If $k=n$ and $\sigma_n$ has eigenvalues $1$ and $-1$, this group has two connected components and its identity component is isomorphic to $\C^\times\times\C^\times$. If $k=n$ and $\sigma_n=\pm\mathrm{id}_{W_n}$, this group is isomorphic to $GL_2$. 
\end{prop}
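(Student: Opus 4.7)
The plan is to verify $G_W^{\theta\sim} = Z_{G_{(\cdot,\cdot)}}(E_0,H_0,F_0)$ by an explicit case-by-case computation inside $G_W$, using the identification $G_W \cong Z_{GL(\tV_0)}(E_0,H_0,F_0)$ coming from Proposition~\ref{prop:maffei1}. Under this identification, an element $(\alpha_k,\alpha_{2n-k})\in\C^\times\times\C^\times$ (when $k<n$) or $\alpha_n\in GL_2$ (when $k=n$) acts as a scalar on each copy of $W_k$ and $W_{2n-k}$, or diagonally on each copy of $W_n$, respectively. Both $G_W^{\theta\sim}$ and $Z_{G_{(\cdot,\cdot)}}(E_0,H_0,F_0)$ are subgroups of this common $G_W$, and the strategy is to compute each explicitly in the three cases (i) $k<n$; (ii) $k=n$ with $\sigma_n=\pm\mathrm{id}_{W_n}$; (iii) $k=n$ with $\sigma_n$ having eigenvalues $\pm1$.

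First I would unwind the definition of $\theta$ using Example~\ref{ex:typea-inv}: it swaps $\alpha_i$ with $\alpha_{2n-i}$ for $i\neq n$ and sends $\alpha_n$ to $\sigma_n\alpha_n\sigma_n^{-1}$. Writing out $\theta(\alpha)=\lambda\alpha$ and checking the constraint~\eqref{eqn:lambda} gives: in case (i), the relations $\alpha_{2n-k}=\lambda\alpha_k$ and $\alpha_k=\lambda\alpha_{2n-k}$ force $\lambda=\pm1$ and $\alpha_{2n-k}=\pm\alpha_k$, producing two components each isomorphic to $\C^\times$; in case (ii), $\theta$ is trivial on $G_W$, so $G_W^{\theta\sim}=GL_2$; and in case (iii), in the basis $\{e_+,e_-\}$ diagonalizing $\sigma_n$, the equation $\sigma_n\alpha_n\sigma_n^{-1}=\lambda\alpha_n$ forces $\alpha_n$ to be diagonal (for $\lambda=1$) or antidiagonal (for $\lambda=-1$), giving two components with identity component $\C^\times\times\C^\times$.

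Next I would compute $Z_{G_{(\cdot,\cdot)}}(E_0,H_0,F_0)$ by plugging the diagonal action above into the explicit definition of the form $(\cdot,\cdot)$ from \S\ref{subsect:form}. In case (i), the $W_k$-block of $(\cdot,\cdot)$ is rescaled by $\alpha_k^2$ and the $W_{2n-k}$-block by $\alpha_{2n-k}^2$, so preservation up to a single scalar forces $\alpha_k^2=\alpha_{2n-k}^2$, i.e.\ $\alpha_{2n-k}=\pm\alpha_k$. In case (ii), $(\cdot,\cdot)_n$ is a nondegenerate skew form on $\C^2$, so every $\alpha_n\in GL_2$ multiplies it by $\det\alpha_n$, and all of $GL_2$ lies in the conformal group. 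In case (iii), $(\cdot,\cdot)_n$ becomes a split symmetric form whose Gram matrix in $\{e_+,e_-\}$ is proportional to $\bigl(\begin{smallmatrix}0&1\\1&0\end{smallmatrix}\bigr)$, and a direct $2\times2$ calculation shows its conformal group is exactly the union of diagonal and antidiagonal matrices. In each case the subgroup obtained coincides with the one found in the previous step, and the component structure follows by inspection.

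There is no conceptual obstacle; the argument is essentially a transparent unpacking of the definitions under Maffei's identification. The only subtlety worth highlighting is that the scalars involved in the two conditions—the $\lambda$ in $\theta(\alpha)=\lambda\alpha$ versus the (generally different) scalar by which $\alpha$ rescales $(\cdot,\cdot)$—need not agree. For instance, on the identity component in case (i) one has $\lambda=1$ while $\alpha$ scales the form by $\alpha_k^2\in\C^\times$. So the comparison must be carried out at the level of the underlying subsets of $G_W$, without conflating the two parameters.
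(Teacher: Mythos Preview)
Your proposal is correct and follows essentially the same approach as the paper's own proof: both compute $G_W^{\theta\sim}$ and $G_W\cap G_{(\cdot,\cdot)}$ case by case inside the small group $G_W\cong Z_{GL(\tV_0)}(E_0,H_0,F_0)$ and observe that the resulting conditions on $(\alpha_k,\alpha_{2n-k})$ or $\alpha_n$ coincide. Your write-up is somewhat more detailed (for instance, you spell out the Gram matrix in case (iii) and explicitly flag that the two scalars $\lambda$ need not agree), but the argument is the same.
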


\begin{proof}
If $k<n$, $G_W\cong\C^\times\times\C^\times$ with $\theta$ interchanging the two factors. For $(\alpha,\beta)\in G_W$, we have $(\alpha,\beta)\in G_W^{\theta\sim}$ if and only if $\alpha=\pm\beta$, which is the same as the condition for $(\alpha,\beta)$, thought of as an element of $GL(\tV_0)$, to belong to $G_{(\cdot,\cdot)}$.

If $k=n$, $G_W=GL(W_n)\cong GL_2$ with $\theta$ being conjugation by $\sigma_n$. The condition for $\alpha\in G_W$ to belong to $G_{(\cdot,\cdot)}$ is the same as the condition for it to belong to the analogous subgroup $G_{(\cdot,\cdot)_n}$ of $GL(W_n)$. If $\sigma_n$ has eigenvalues $1$ and $-1$, then both $G_W^{\theta\sim}$ and $G_{(\cdot,\cdot)_n}$ equal the subgroup of $GL(W_n)$ stabilizing the decomposition of $W_n$ into eigenspaces of $\sigma_n$. If $\sigma_n=\pm\mathrm{id}_{W_n}$, then both $G_W^{\theta\sim}$ and $G_{(\cdot,\cdot)_n}=G_{\langle\cdot,\cdot\rangle_n}$ equal $GL(W_n)$.
\end{proof}

\subsection{Maffei's isomorphism $\Phi_1$ in the $k=n$ case}
\label{subsect:recursion}

For this subsection and the next, we take $k=n$. We need to recall, and slightly rephrase, Maffei's definition of the isomorphism $\Phi_1$ in our special case. 

It is helpful to state the recursive part of the definition in an abstract setting. Let $R$ be any associative $\C$-algebra with unit element $1_R$. Given a sequence $r_1,r_2,r_3,\cdots$ of elements of $R$, we can define two sequences of matrices with entries in $R$, denoted $M_j$ and $N_j$ for $j\in\N$, where $M_j$ and $N_j$ depend only on $r_1,\cdots,r_j$. These matrices $M_j$ and $N_j$ are defined recursively by the following three conditions:
\begin{enumerate}
\item[(A)] $M_j$ is a $j\times(j+1)$ matrix and $N_j$ is a $(j+1)\times j$ matrix. Moreover, some of their entries are fixed to be $0$ or $1_R$ as follows:
\[
\begin{split}
M_j&=\begin{pmatrix}
\alpha_{j;1,1}&1_R&0&\cdots&0&0\\
\alpha_{j;2,1}&\alpha_{j;2,2}&1_R&\cdots&0&0\\
\vdots&\vdots&\vdots&\ddots&\vdots&\vdots\\
\alpha_{j;j-1,1}&\alpha_{j;j-1,2}&\alpha_{j;j-1,3}&\cdots&1_R&0\\
\alpha_{j;j,1}&\alpha_{j;j,2}&\alpha_{j;j,3}&\cdots&\alpha_{j;j,j}&1_R
\end{pmatrix},
\\
N_j&=\begin{pmatrix}
1_R&0&\cdots&0&0\\
\beta_{j;1,1}&1_R&\cdots&0&0\\
\vdots&\vdots&\ddots&\vdots&\vdots\\
\beta_{j;j-2,1}&\beta_{j;j-2,2}&\cdots&1_R&0\\
\beta_{j;j-1,1}&\beta_{j;j-1,2}&\cdots&\beta_{j,j-1,j-1}&1_R\\
\beta_{j;j,1}&\beta_{j;j,2}&\cdots&\beta_{j;j,j-1}&\beta_{j;j,j}
\end{pmatrix}.
\end{split}
\]
Here $\alpha_{j;a,b},\beta_{j;a,b}$ for $j\geq a\geq b\geq 1$ are elements of $R$, to be specified by the other conditions (observe that the subscript $a$ in $\beta_{j;a,b}$ is one less than the row number). Note that $M_0$ and $N_0$ must be the unique `empty' $0\times 1$ and $1\times 0$ matrices respectively, which gives us a base case for the recursion.
\item[(B)] For all $j\geq 1$, we have 
\[ M_jN_j=N_{j-1}M_{j-1}+X_j, \]
where $X_j$ is the $j\times j$ matrix all of whose entries are zero except for the lower-left entry, which is $r_j$. Thus, the entries of $M_{j-1}$ and $N_{j-1}$, together with $r_j$, determine each entry of $M_jN_j$. Note that the $(a,b)$ entry of $M_jN_j$, for $j\geq a\geq b\geq 1$, has the form $\alpha_{j;a,b}+\beta_{j;a,b}+\text{other terms}$, where the `other terms' involve only those $\alpha_{j;a',b'},\beta_{j;a',b'}$ where $a'-b'<a-b$.  
\item[(C)] For all $j\geq 1$, the matrix $N_jM_j$ satisfies the `Slodowy slice' condition that
\[ N_jM_j-E^{[j+1]}\text{ commutes with }F^{[j+1]}. \]
The matrix $N_jM_j-E^{[j+1]}$ is automatically lower-triangular by (A), so the condition that it commutes with $F^{[j+1]}$ amounts to a collection of linear equations on the diagonal and below-diagonal entries of $N_j M_j$. Note that the $(a,b)$ entry of $N_jM_j$, for $j+1\geq a\geq b\geq 1$, has the form $\alpha_{j;a,b}+\beta_{j;a-1,b-1}+\text{other terms}$, where the `other terms' involve only those $\alpha_{j;a',b'},\beta_{j;a',b'}$ where $a'-b'<a-b$, and $\alpha_{j;j+1,b}$ and $\beta_{j;a-1,0}$ are to be interpreted as zero. One can easily deduce from this that the linear equations in these entries, together with the equations given by (B), determine all $\alpha_{j;a,b},\beta_{j;a,b}$ uniquely.   
\end{enumerate}

\begin{ex}
The matrices $M_1,N_1,M_2,N_2$ are determined as follows. The $j=1$ case of (A) says that $M_1=(\alpha_{1;1,1}\ 1_R)$ and $N_1=(\begin{smallmatrix}1_R\\ \beta_{1;1,1}\end{smallmatrix})$ for some $\alpha_{1;1,1},\beta_{1;1,1}\in R$. The $j=1$ case of (B) says that $M_1 N_1=(r_1)$, i.e.\ $\alpha_{1;1,1}+\beta_{1;1,1}=r_1$. The $j=1$ case of (C) says that $N_1M_1-(\begin{smallmatrix}0&1_R\\0&0\end{smallmatrix})$ commutes with $(\begin{smallmatrix}0&0\\1_R&0\end{smallmatrix})$, i.e.\  $\alpha_{1;1,1}=\beta_{1;1,1}$. We conclude that 
\begin{equation}
M_1=\begin{pmatrix}\frac{r_1}{2}& 1_R\end{pmatrix},
\quad
N_1=\begin{pmatrix}1_R\\ \frac{r_1}{2}\end{pmatrix}.
\end{equation} 
The $j=2$ case of (A) says that $M_2$ and $N_2$ have the form:
\[
M_2=\begin{pmatrix}
\alpha_{2;1,1}&1_R&0\\
\alpha_{2;2,1}&\alpha_{2;2,2}&1_R
\end{pmatrix},
\quad
N_2=\begin{pmatrix}
1_R&0\\
\beta_{2;1,1}&1_R\\
\beta_{2;2,1}&\beta_{2;2,2}
\end{pmatrix}.
\]
The $j=2$ case of (B) is equivalent to the equations:
\[
\begin{split}
\alpha_{2;1,1}+\beta_{2;1,1}=\alpha_{2;2,2}+\beta_{2;2,2}&=\frac{r_1}{2},\\
\alpha_{2;2,1}+\beta_{2;2,1}+\alpha_{2;2,2}\beta_{2;1,1}&=\frac{r_1^2}{4}+r_2.
\end{split}
\]
The $j=2$ case of (C) is equivalent to the equations:
\[
\begin{aligned}
\alpha_{2;1,1}=\beta_{2;1,1}+\alpha_{2;2,2}=\beta_{2;2,2},\\
\alpha_{2;2,1}+\beta_{2;1,1}\alpha_{2;1,1}=\beta_{2;2,1}+\beta_{2;2,2}\alpha_{2;2,2}.
\end{aligned}
\]
We conclude that
\begin{equation}
M_2=\begin{pmatrix}
\frac{r_1}{3}&1_R&0\\
\frac{r_1^2}{9}+\frac{r_2}{2}&\frac{r_1}{6}&1_R
\end{pmatrix},
\quad
N_2=\begin{pmatrix}
1_R&0\\
\frac{r_1}{6}&1_R\\
\frac{r_1^2}{9}+\frac{r_2}{2}&\frac{r_1}{3}
\end{pmatrix}.
\end{equation}
\end{ex}

We do not need a general solution of the recursion, merely the following information about the dependence on the parameters.

\begin{lem} \label{lem:parameters}
Define the matrices $M_j,N_j$ with entries $\alpha_{j;a,b},\beta_{j;a,b}$ as above using the parameters $r_j$, and similarly define matrices $M_j',N_j'$ with entries $\alpha_{j;a,b}',\beta_{j;a,b}'$ using the parameters $r_j'$.
\begin{enumerate}
\item Let $\lambda\in\C$ and suppose $r_j'=\lambda^j r_j$ for all $j$. Then for all $j\geq a\geq b\geq 1$,
\[
\alpha_{j;a,b}'=\lambda^{a-b+1}\alpha_{j;a,b},\ 
\beta_{j;a,b}'=\lambda^{a-b+1}\beta_{j;a,b}.
\]
\item Let $r\mapsto r^*$ be an anti-automorphism of the $\C$-algebra $R$, and suppose that $r_j'=r_j^*$ for all $j$. Then for all $j\geq a\geq b\geq 1$,
\[
\alpha_{j;a,b}'=(\beta_{j;j-b+1,j-a+1})^*,\ 
\beta_{j;a,b}'=(\alpha_{j;j-b+1,j-a+1})^*.
\]
\end{enumerate}
\end{lem}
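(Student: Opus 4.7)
Both parts can be established by exhibiting candidate matrices that satisfy the recursive conditions (A)--(C) with the new parameters, and then invoking the uniqueness asserted after (C).

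For part (1), the strategy is a weight-homogeneity argument. Assign weight $a-b+1$ to each $\alpha_{j;a,b}$ and each $\beta_{j;a,b}$, and weight $j$ to $r_j$. A direct inspection of the shape prescribed by (A) shows that every monomial appearing in the $(a,b)$ entry of $M_j N_j$ and of $N_j M_j$ has weight $a-b+1$; the superdiagonal $1_R$'s contribute consistently with weight $0$, and the unique nonzero entry of $X_j$ sits at $(j,1)$ with matching weight $j$. The equations from (B) and (C) are therefore weight-homogeneous. Hence replacing $\alpha_{j;a,b}$ by $\lambda^{a-b+1}\alpha_{j;a,b}$ and $\beta_{j;a,b}$ by $\lambda^{a-b+1}\beta_{j;a,b}$ produces matrices satisfying the recursion for parameters $\lambda^j r_j$; uniqueness finishes the job.

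For part (2), let $\dagger$ denote the composition of transposition with the entry-wise application of $*$; since $*$ is an anti-automorphism of $R$, one has $(AB)^\dagger = B^\dagger A^\dagger$. Let $J_m$ denote the $m\times m$ anti-diagonal permutation matrix. Define candidate matrices
\[ \widehat{M}_j := J_j\, N_j^\dagger\, J_{j+1}, \qquad \widehat{N}_j := J_{j+1}\, M_j^\dagger\, J_j. \]
A direct computation of entries shows that $\widehat{M}_j$ and $\widehat{N}_j$ have exactly the shape prescribed by (A), and reading off their entries yields precisely the formulas stated in the lemma. For (B), the identity $(AB)^\dagger = B^\dagger A^\dagger$ gives
\[ \widehat{M}_j\widehat{N}_j - \widehat{N}_{j-1}\widehat{M}_{j-1} = J_j\bigl((M_j N_j) - (N_{j-1}M_{j-1})\bigr)^\dagger J_j = J_j\, X_j^\dagger\, J_j, \]
whose unique nonzero entry $r_j^*$ sits at position $(j,1)$, as required.

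The main obstacle is verifying (C). The easy identity $J_{j+1}(E^{[j+1]})^T J_{j+1} = E^{[j+1]}$, which follows from the fact that $E^{[j+1]}$ has only $1_R$'s on its superdiagonal, reduces the task to showing that if $A$ commutes with $F^{[j+1]}$, then so does $J_{j+1} A^\dagger J_{j+1}$. Applying $\dagger$ to the commutation relation $A F^{[j+1]} = F^{[j+1]} A$ gives $(F^{[j+1]})^\dagger A^\dagger = A^\dagger (F^{[j+1]})^\dagger$; since the subdiagonal entries $i(j+1-i)\cdot 1_R$ of $F^{[j+1]}$ are fixed by $*$, this says $(F^{[j+1]})^T$ commutes with $A^\dagger$. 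Finally, conjugation by $J_{j+1}$ transforms $(F^{[j+1]})^T$ back into $F^{[j+1]}$, thanks to the palindrome identity $i(j+1-i) = (j+1-i)\cdot i$ on the subdiagonal. This yields (C), and uniqueness of the recursion then identifies $\widehat{M}_j$ with $M_j'$ and $\widehat{N}_j$ with $N_j'$, completing the proof.
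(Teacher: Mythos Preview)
Your proposal is correct and follows essentially the same approach as the paper: in each case you exhibit candidate matrices and verify that they satisfy the defining conditions (A)--(C), then invoke the uniqueness built into the recursion. The paper's proof is a one-line ``straightforward to check''; your anti-diagonal conjugation $\widehat{M}_j=J_j N_j^\dagger J_{j+1}$, $\widehat{N}_j=J_{j+1} M_j^\dagger J_j$ for part~(2) is a clean structural way to organise that check, and the key identity $J_{j+1}(F^{[j+1]})^T J_{j+1}=F^{[j+1]}$ is exactly what makes condition~(C) go through.
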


\begin{proof}
In each case it is straightforward to check that the matrices $M_j',N_j'$ defined by plugging the stated formulas into the expressions of condition (A) satisfy conditions (B) and (C) with the parameters $r_j'$.
\end{proof}

To express our special case of Maffei's isomorphism $\Phi_1$, we want to consider this recursion for $R=\End(W_n)$, up to the stage where it defines $M_n$ and $N_n$ (so we only need the parameters $r_1,\cdots,r_n$). Then $M_nN_n$, an $n\times n$ matrix with entries in $\End(W_n)$, can be regarded as an element of $\End(\tV_0)$. Recall the notation for compositions of the maps $B_h$ introduced in Example~\ref{ex:typea}.

\begin{lem} \label{lem:maffei-rephrased}
Continue the above assumptions, in particular~\eqref{eqn:small} with $k=n$. Let $(B_h,\Gamma_i,\Delta_i)\in\Lambda^{\A_{2n-1}}(V,W)^s$. Then $\Phi_1(\pi(G_V\cdot(B_h,\Gamma_i,\Delta_i)))$ is the matrix $M_nN_n$ obtained from the above recursion with $R=\End(W_n)$ and
\[
r_j=\Delta_n B_{n,n-j+1,n}\Gamma_n\ \text{ for $1\leq j\leq n$.}
\]
\end{lem}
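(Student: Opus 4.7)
The plan is to verify that the matrices $M_j,N_j$ produced by the abstract recursion (A)--(C), with $R=\End(W_n)$ and parameters $r_j=\Delta_n B_{n,n-j+1,n}\Gamma_n$, coincide with the linear maps arising in Maffei's inductive construction of $\Phi_1$ in \cite[Definition 15 and Theorem 8]{maffei}, after translating between the two sets of conventions and accounting for Remark~\ref{rmk:sign}.

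First I would recall Maffei's construction in our special case. Starting from a stable representative $(B_h,\Gamma_i,\Delta_i)$, one builds inductively for $j=0,1,\dots,n$ a pair of matrices $\tA_j,\tB_j$ of linear maps between direct sums of copies of $W_n$ (equivalently, matrices with entries in $\End(W_n)$), whose base case encodes $\Gamma_n,\Delta_n$ and whose inductive step uses the maps $B_{n-j+1,n-j}, B_{n-j,n-j+1}, B_{n+j-1,n+j}, B_{n+j,n+j-1}$ together with the quiver relations \eqref{eqn:quadratic} at vertices $n-j+1$ and $n+j-1$. The freedom to change representative within the $G_V$-orbit is removed by normalising the `new' entries introduced at each step to be the identity of $\End(W_n)$. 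The image $\Phi_1(\pi(G_V\cdot(B_h,\Gamma_i,\Delta_i)))$ is the endomorphism of $\tV_0=W_n^{\oplus n}$ read off at stage $j=n$.

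Next I would verify, by induction on $j$, that Maffei's matrices satisfy conditions (A), (B) and (C) with the claimed parameters. Condition (A) is exactly the normalisation Maffei imposes at each stage, so the $1_R$ entries are in the right places. Condition (C) is Maffei's defining property that the output lies in the Slodowy slice $\cS^{\A_{2n-1}}_{(2n-n,n)}$ through $E_0=E^{[n]}$. For condition (B), $M_jN_j=N_{j-1}M_{j-1}+X_j$, the inductive hypothesis handles every entry except the lower-left one, since the other entries correspond to compositions that have already been computed at stage $j-1$ (they do not pass through vertex $n-j+1$). The lower-left entry is the residual contribution obtained by pushing the pair $(\Gamma_n,\Delta_n)$ all the way down to vertex $n-j+1$ and back; using the quiver relation at that vertex to simplify the resulting expression, one sees that it is precisely $\Delta_n B_{n,n-j+1,n}\Gamma_n=r_j$ (with $B_{n,n,n}$ interpreted as $\mathrm{id}_{V_n}$ when $j=1$). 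Because (B) and (C) together determine $M_j,N_j$ uniquely once (A) is imposed, this matches Maffei's matrices with $M_j,N_j$ and yields $\Phi_1(\pi(G_V\cdot(B_h,\Gamma_i,\Delta_i)))=M_nN_n$ at $j=n$.

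The main obstacle is bookkeeping: one has to translate Maffei's orientation to ours via the sign substitutions of Remark~\ref{rmk:sign}, and then track how those signs, together with the two minus signs in the central equation $-B_{n,n-1,n}-B_{n,n+1,n}=\Gamma_n\Delta_n$ of \eqref{eqn:quadratic} and the symmetrisation between the two `arms' of the type A$_{2n-1}$ quiver, combine inside Maffei's recursion to produce the lower-left entry $r_j=\Delta_n B_{n,n-j+1,n}\Gamma_n$ with exactly the asserted sign and without any residual scalar factor. This is a routine but somewhat delicate induction on $j$, controlled entirely by the quadratic relations \eqref{eqn:quadratic} at the interior vertices.
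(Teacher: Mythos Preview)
Your overall strategy---matching Maffei's inductive construction with the abstract recursion (A)--(C)---is exactly the paper's approach. However, your description of Maffei's construction contains misconceptions that would make the execution needlessly complicated, and you are missing the one clean observation that makes the proof short.

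The point you are missing is this: in Maffei's setup (with $k=n$), the auxiliary spaces are $\tV_i = V_i \oplus W_n^{\oplus(n-i)}$ for $0\le i\le n-1$ and $\tV_i = V_i$ for $n\le i\le 2n-1$, and Maffei's maps $\tA_i:\tV_i\to\tV_{i+1}$, $\tB_i:\tV_{i+1}\to\tV_i$ satisfy block-matrix conditions (A$'$)--(C$'$) mirroring (A)--(C). The paper simply \emph{deletes the first block row and column} (those indexed by the $V_i$-summand) from $\tA_{n-j-1}$ and $\tB_{n-j-1}$ to obtain $M_j$ and $N_j$. Because the deleted column of $\tA_{n-j-1}$ and the deleted row of $\tB_{n-j-1}$ are zero below the $(1,1)$ entry, restricting Maffei's (A$'$)--(C$'$) to the $W_n$-blocks yields exactly (A)--(C). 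The parameter $r_j$ appears as the product of the two surviving off-diagonal blocks, $\Delta_n B_{n,n-j+1}\cdot B_{n-j+1,n}\Gamma_n = \Delta_n B_{n,n-j+1,n}\Gamma_n$, by a single matrix multiplication---\emph{no quiver relation is invoked} at that step.

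Two specific errors in your proposal: (i) the right arm of the quiver (vertices $n+1,\dots,2n-1$) does \emph{not} enter the recursion for $M_j,N_j$ at all. Maffei's $\tA_i,\tB_i$ for $i\ge n$ are just the original $B$-maps and carry no $W_n$-summand, so there is no ``symmetrisation between the two arms'' to perform and no relation at vertex $n+j-1$ to use. (ii) The sign-tracking you call the ``main obstacle'' is a red herring: the signs of Remark~\ref{rmk:sign} live entirely on the right arm, and the central relation at vertex $n$ is not used in identifying $r_j$. Once you delete the $V_i$-row and column, the lemma is immediate.
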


\begin{proof}
We need to explain how to pass from Maffei's general definition of $\Phi_1$ to the above statement in our special case. For convenience, set $V_0=0$ and let $B_{0,1}:V_1\to V_0$ and $B_{1,0}:V_0\to V_1$ be zero maps. Along with $\tV_0=V_0\oplus W_n^{\oplus n}$, define the vector spaces
\[
\begin{split}
\tV_i&:=\begin{cases}
V_i\oplus W_n^{\oplus n-i},&\text{ for $1\leq i\leq n-1$,}\\
V_i,&\text{ for $n\leq i\leq 2n-1$.}
\end{cases}
\end{split}
\]
In~\cite[Lemma 18]{maffei}, Maffei associates to $(B_h,\Gamma_i,\Delta_i)\in\Lambda^{\A_{2n-1}}(V,W)$ a collection of linear maps
\begin{equation} \label{eqn:config}
\xymatrix{
\tV_0\ar@/^/[r]^{\tA_0}&\tV_1\ar@/^/[l]^{\tB_0}\ar@/^/[r]^{\tA_1}&\tV_2\ar@/^/[l]^{\tB_1}\ar@/^/[r]^{\tA_2}&\cdots\ar@/^/[l]^{\tB_2}\ar@/^/[r]^{\tA_{2n-2}}&\tV_{2n-1}\ar@/^/[l]^{\tB_{2n-2}}
}
\end{equation}
which by~\cite[Definition 16, Lemma 17]{maffei} is the unique such collection satisfying the following three conditions. (Recall from Remark~\ref{rmk:sign} that there are some signs involved in changing from Maffei's orientation to ours.)
\begin{enumerate}
\item[(A')] For $n\leq i\leq 2n-2$, $\tA_i=B_{i+1,i}$ and $\tB_i=-B_{i,i+1}$. For $0\leq i\leq n-1$, $\tA_i$ and $\tB_i$ have the block-matrix form:
\[
\begin{split}
\tA_i&=\begin{pmatrix}
B_{i+1,i}&B_{i+1,n}\Gamma_n&0&0&\cdots&0\\
0&\talpha_{i,n,1}^{n,1}&\mathrm{id}_{W_n}&0&\cdots&0\\
0&\talpha_{i,n,2}^{n,1}&\talpha_{i,n,2}^{n,2}&\mathrm{id}_{W_n}&\cdots&0\\
\vdots&\vdots&\vdots&\vdots&\ddots&\vdots\\
0&\talpha_{i,n,n-i-1}^{n,1}&\talpha_{i,n,n-i-1}^{n,2}&\talpha_{i,n,n-i-1}^{n,3}&\cdots&\mathrm{id}_{W_n}
\end{pmatrix},
\\
\tB_i&=\begin{pmatrix}
B_{i,i+1}&0&0&\cdots&0\\
0&\mathrm{id}_{W_n}&0&\cdots&0\\
0&\sbeta_{i,n,2}^{n,1}&\mathrm{id}_{W_n}&\cdots&0\\
\vdots&\vdots&\vdots&\ddots&\vdots\\
0&\sbeta_{i,n,n-i-1}^{n,1}&\sbeta_{i,n,n-i-1}^{n,2}&\cdots&\mathrm{id}_{W_n}\\
\Delta_n B_{n,i+1}&\sbeta_{i,n,n-i}^{n,1}&\sbeta_{i,n,n-i}^{n,2}&\cdots&\sbeta_{i,n,n-i}^{n,n-i-1}
\end{pmatrix},
\end{split}
\]
for some $\talpha_{i,n,h}^{n,h'},\sbeta_{i,n,h}^{n,h'}\in\End(W_n)$ (to use Maffei's notation).
\item[(B')] $\tA_i\tB_i=\tB_{i+1}\tA_{i+1}$ for $0\leq i\leq 2n-3$. 
\item[(C')] $[\tB_i\tA_i-E_i,F_i]=0$ for $0\leq i\leq n-1$, where $E_i,F_i\in\fsl(\tV_i)$ are defined as block matrices which have the form $E^{[n-i]},F^{[n-i]}$
as regards the $W_n$ summands and have zero blocks in the row and column corresponding to the $V_i$ summand.
\end{enumerate}
By~\cite[Definition 20]{maffei}, if $(B_h,\Gamma_i,\Delta_i)\in\Lambda^{\A_{2n-1}}(V,W)^s$, then 
\begin{equation}
\Phi_1(\pi(G_V\cdot(B_h,\Gamma_i,\Delta_i)))=\tB_0\tA_0.
\end{equation}
Now for $0\leq j\leq n-1$ let $M_j$ and $N_j$ be obtained from $\tA_{n-j-1}$ and $\tB_{n-j-1}$ respectively by deleting the first row and column in the above block-matrix expressions. Thus $M_j$ is a $j\times (j+1)$ matrix with entries in $\End(W_n)$ and $N_j$ is a $(j+1)\times j$ matrix with entries in $\End(W_n)$. As may be easily checked, conditions (A')--(C') in the definition of $\tA_i,\tB_i$ imply respectively that $M_j$ and $N_j$ satisfy conditions (A)--(C) in the above recursion, with the parameters $r_j$ for $1\leq j\leq n-1$ defined as in the statement.
Using condition (B) in the recursion once more, one sees that $\tB_0\tA_0=M_nN_n$ where the parameter $r_n$ is defined by the same formula.
\end{proof}

\begin{rmk}
As the proof shows, Lemma~\ref{lem:maffei-rephrased} does not in fact require $\dim W_n=2$.
\end{rmk}

\subsection{Proof of Theorem~\ref{thm:involutions} in the $k=n$ case}
\label{subsect:proofk=n}

The proof requires some preliminary calculations. First we have a general result about configurations of linear maps satisfying~\eqref{eqn:quadratic}. 

\begin{lem} \label{lem:inverses}
Let $(B_h,\Gamma_i,\Delta_i)\in\Lambda^{\A_{2n-1}}(V,W)$ for arbitrary graded vector spaces $V$ and $W$. Define $A,B\in\End(V)$, $\Gamma\in\Hom(W,V)$ and $\Delta\in\Hom(V,W)$ by the following block-matrix formulas:
\[
\begin{array}{ll}
A=\displaystyle\bigoplus_{i=1}^{2n-2}B_{i+1,i},& B=\displaystyle\bigoplus_{i=1}^{2n-2}\pm B_{i,i+1},\\
\\
\Gamma=\displaystyle\bigoplus_{i=1}^{2n-1}\Gamma_i,& \Delta=\displaystyle\bigoplus_{i=1}^{2n-1} \Delta_i,
\end{array}
\]
where the sign in front of $B_{i,i+1}$ is $+$ if $i<n$ and $-$ if $i\geq n$.
Then the following elements of $\End(W)[z]$ are inverse to each other:
\[
\begin{split}
X(z)&=\mathrm{id}_W - \sum_{j,k\geq 0} \Delta A^j B^k \Gamma\,z^{j+k+2},\\
Y(z)&=\mathrm{id}_W + \sum_{j,k\geq 0} \Delta B^k A^j \Gamma\,z^{j+k+2}.
\end{split}
\]
\end{lem}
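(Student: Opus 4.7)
The plan is to recognise $X(z)$ and $Y(z)$ as rational expressions and reduce the identity to a single commutator relation on $V$. First, I would note that $A$ and $B$ are nilpotent as endomorphisms of the finite-dimensional graded space $V$ (each shifts the $I$-grading by $\pm 1$, so $A^{2n-1}=B^{2n-1}=0$), which justifies working with $U(z):=(\mathrm{id}_V-Az)^{-1}=\sum_{j\ge 0}A^jz^j$ and $V(z):=(\mathrm{id}_V-Bz)^{-1}=\sum_{k\ge 0}B^kz^k$ in $\End(V)[z]$. Then the definitions rewrite as
\[ X(z)=\mathrm{id}_W-z^2\,\Delta\,U(z)V(z)\,\Gamma,\qquad Y(z)=\mathrm{id}_W+z^2\,\Delta\,V(z)U(z)\,\Gamma. \]

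Next, I would derive from the defining equations~\eqref{eqn:quadratic} the global commutator identity
\[ BA-AB=\Gamma\Delta \quad\text{in }\End(V). \]
This is a block-by-block verification on each $V_i$: for $1<i<n$ the $V_i$-components of $BA$ and $AB$ are $B_{i,i+1,i}$ and $B_{i,i-1,i}$ respectively (both with sign $+$), yielding the required equation; for $i>n$, the two extra minus signs coming from the sign convention in the definition of $B$ leave the difference unchanged up to reversing the roles of the two terms, again matching~\eqref{eqn:quadratic}; for $i=n$, only one of the two signs flips, producing the relation $-B_{n,n+1,n}-B_{n,n-1,n}=\Gamma_n\Delta_n$; and the endpoints $i=1$, $i=2n-1$ follow from $B|_{V_1}=0$, $A|_{V_{2n-1}}=0$. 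The signs in the definition of $B$ are tuned precisely so that this uniform relation holds.

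The commutator identity immediately implies the polynomial identity
\[ (\mathrm{id}_V-Bz)(\mathrm{id}_V-Az)-(\mathrm{id}_V-Az)(\mathrm{id}_V-Bz)=z^2(BA-AB)=z^2\Gamma\Delta. \]
Pre-multiplying by $U(z)V(z)$ and post-multiplying by $V(z)U(z)$, the collapses $U(z)(\mathrm{id}_V-Az)=\mathrm{id}_V=(\mathrm{id}_V-Bz)V(z)$ (applied twice each) give
\[ z^2\,U(z)V(z)\,\Gamma\Delta\,V(z)U(z)=V(z)U(z)-U(z)V(z). \]
Sandwiching with $\Delta$ on the left and $\Gamma$ on the right, the expansion
\[ X(z)Y(z)=\mathrm{id}_W+z^2\Delta\bigl(V(z)U(z)-U(z)V(z)\bigr)\Gamma-z^4\,\Delta\,U(z)V(z)\,\Gamma\Delta\,V(z)U(z)\,\Gamma \]
has its last two terms cancel, so $X(z)Y(z)=\mathrm{id}_W$. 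Since $Y(0)=\mathrm{id}_W$ is invertible, this one-sided inverse is a two-sided inverse in $\End(W)[z]$.

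The only real obstacle is the sign bookkeeping in step two: one must check that the alternating-sign convention chosen for $B$ produces \emph{the same} commutator identity on every $V_i$, so that $BA-AB=\Gamma\Delta$ holds globally rather than merely up to a block-dependent sign. Once this is in hand, the rest is the short generating-function manipulation above.
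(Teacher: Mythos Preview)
Your proof is correct and follows essentially the same approach as the paper: both arguments first observe that $A,B$ are nilpotent, then derive the global commutator relation $BA-AB=\Gamma\Delta$ from the defining equations~\eqref{eqn:quadratic}, and finally use this to show that the cross-terms in $X(z)Y(z)$ cancel. Your packaging via the resolvents $U(z)=(\mathrm{id}_V-Az)^{-1}$ and $V(z)=(\mathrm{id}_V-Bz)^{-1}$ is a clean way to organise the cancellation that the paper does by direct series manipulation, but the content is the same; one small point is that your closing remark (``since $Y(0)=\mathrm{id}_W$ is invertible\ldots'') is not quite the right justification for two-sidedness in $\End(W)[z]$ --- the standard argument is that $\End(W)[z]\cong M_d(\C[z])$ and over a commutative ring a one-sided matrix inverse is automatically two-sided.
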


\begin{proof}
First note that $X(z)$ and $Y(z)$ do indeed belong to $\End(W)[z]$ and not $\End(W)[\![z]\!]$, since $A$ and $B$ are nilpotent.
The equations~\eqref{eqn:quadratic} can be combined into a single equation in $\End(V)$:
\begin{equation} \label{eqn:adhm-again}
BA-AB=\Gamma\Delta.
\end{equation}
Hence we have
\begin{equation*}
\begin{split}
X(z)Y(z)&=\mathrm{id}_W+ \sum_{j,k\geq 0} \Delta B^k A^j \Gamma\,z^{j+k+2}-\sum_{j,k\geq 0} \Delta A^j B^k \Gamma\,z^{j+k+2}\\
&\quad -\sum_{j,k,j',k'\geq 0} \Delta A^j B^k(BA-AB)B^{k'}\!A^{j'}\Gamma\,z^{j+k+j'+k'+4},
\end{split}
\end{equation*}
and it is easy to see that all terms except $\mathrm{id}_W$ cancel out.
\end{proof}

\begin{lem} \label{lem:sums}
Revert to the standing assumptions, in particular~\eqref{eqn:small} with $k=n$. Let $(B_h,\Gamma_i,\Delta_i)\in\Lambda^{\A_{2n-1}}(V,W)$. Then for $1\leq j\leq n$,
\[
(\Delta_nB_{n,n-j+1,n}\Gamma_n)^\bt=(-1)^{j}\sigma_n\Delta_nB_{n,n+j-1,n}\Gamma_n\sigma_n,
\]
where $x\mapsto x^\bt$ denote the transpose anti-automorphism of $\End(W_n)$ with respect to the form $(\cdot,\cdot)_n$.
\end{lem}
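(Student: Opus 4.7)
The plan is to reformulate the claim as an identity between inverse generating functions coming from Lemma~\ref{lem:inverses}, and then to reduce it to a determinant computation. In our setting $W=W_n$ (since $w_i=0$ for $i\neq n$ when $k=n$), and because $\Delta_i=0$ for $i\neq n$, the summand $\Delta A^jB^k\Gamma\in\End(W_n)$ appearing in $X(z)$ of Lemma~\ref{lem:inverses} vanishes unless the path $V_n\to V_{n-k+j}$ returns to $V_n$, which (together with $k\leq n-1$) forces $j=k$; similarly for $\Delta B^kA^j\Gamma$. A straightforward bookkeeping of the signs in the definition of $B$---positive on the left half of the quiver ($i<n$), negative on the right ($i\geq n$)---gives
\[
A^kB^k|_{V_n}=B_{n,n-k,n},\qquad B^kA^k|_{V_n}=(-1)^kB_{n,n+k,n}.
\]
Writing $c_j=\Delta_nB_{n,n-j+1,n}\Gamma_n$ and $c_j'=\Delta_nB_{n,n+j-1,n}\Gamma_n$, Lemma~\ref{lem:inverses} specialises in our case to $X(z)Y(z)=\mathrm{id}_{W_n}$ with
\[
X(z)=\mathrm{id}_{W_n}-\sum_{j\geq 1}c_jz^{2j},\qquad Y(z)=\mathrm{id}_{W_n}+\sum_{j\geq 1}(-1)^{j-1}c_j'z^{2j}.
\]

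Next, the relation $(w,y)_n=\langle w,\sigma_ny\rangle_n$ together with $\sigma_n^2=\mathrm{id}_{W_n}$ implies that $X^\bt=\sigma_nX^\ast\sigma_n$ for every $X\in\End(W_n)$, where $\ast$ denotes the transpose anti-automorphism with respect to $\langle\cdot,\cdot\rangle_n$. Hence the claim $c_j^\bt=(-1)^j\sigma_nc_j'\sigma_n$ for all $j\geq 1$ is equivalent to $c_j^\ast=(-1)^jc_j'$ for all $j\geq 1$, and comparing coefficients of $z^{2j}$ this is precisely the single polynomial equation $X(z)^\ast=Y(z)$ in $\End(W_n)[z]$. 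Since $Y(z)=X(z)^{-1}$, it is further equivalent to $X(z)^\ast X(z)=\mathrm{id}_{W_n}$, namely that $X(z)$ lies in the symplectic group of $(W_n,\langle\cdot,\cdot\rangle_n)$. Since $\dim W_n=2$, this group coincides with $\mathrm{SL}(W_n)$, so the entire lemma reduces to proving $\det X(z)=1$.

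To establish $\det X(z)=1$, set $M=(1-Az)^{-1}(1-Bz)^{-1}\in\End(V)[z]$, so that $X(z)=\mathrm{id}_{W_n}-z^2\Delta_nM\Gamma_n$; Sylvester's determinant identity then yields $\det X(z)=\det_V(I_V-z^2M\Gamma\Delta)$. Collecting the equations~\eqref{eqn:quadratic} into the single moment-map identity $BA-AB=\Gamma\Delta$ in $\End(V)$ (as was done inside the proof of Lemma~\ref{lem:inverses}), we obtain $z^2\Gamma\Delta=(1-Bz)(1-Az)-(1-Az)(1-Bz)$, whence $I_V-z^2M\Gamma\Delta=M(1-Az)(1-Bz)$, of determinant $\det M\cdot\det(1-Az)\cdot\det(1-Bz)=1$.

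The main obstacle is the sign bookkeeping in the first paragraph: one must correctly track the $(-1)^k$ arising from the $k$ negative signs picked up when applying $B$ across the right half of the quiver in computing $B^kA^k|_{V_n}$, which is what ultimately forces the $(-1)^j$ in the statement of the lemma. Once the correct identification of generating functions has been made, the final determinant computation via Sylvester's identity and the moment map is essentially immediate.
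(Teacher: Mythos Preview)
Your proof is correct and follows essentially the same route as the paper's: specialise Lemma~\ref{lem:inverses} to the $k=n$ case, reformulate the identity as $X(z)^\ast=Y(z)=X(z)^{-1}$, and reduce to $\det X(z)=1$ via the coincidence $Sp_2=SL_2$. The only divergence is in how $\det X(z)=1$ is established: the paper simply observes that $\det X(z)$ and $\det Y(z)$ are polynomials in $z$ with constant term $1$ whose product is $1$ (since $X(z)Y(z)=\mathrm{id}$), hence both are identically $1$; your Sylvester/moment-map computation reaches the same conclusion but is more work than necessary given that you already have $Y(z)=X(z)^{-1}$ from Lemma~\ref{lem:inverses}.
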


\begin{proof}
Under our assumptions, $W=W_n$ and the elements of $\End(W_n)[z]$ defined in Lemma~\ref{lem:inverses} become
\[
\begin{split}
X(z)&=\mathrm{id}_{W_n}-\sum_{j=1}^n\Delta_nB_{n,n-j+1,n}\Gamma_n\, z^{2j},\\
Y(z)&=\mathrm{id}_{W_n}+\sum_{j=1}^n(-1)^{j-1}\Delta_nB_{n,n+j-1,n}\Gamma_n\, z^{2j}.
\end{split}
\]
Thinking of $X(z)$ and $Y(z)$ as elements of $\End(W_n\otimes_\C \C[z])$, their determinants are polynomials in $z$ with constant term $1$, and by Lemma~\ref{lem:inverses} must be identically $1$. Since $\dim W_n=2$, this implies that $X(z)$ stabilizes the $\C[z]$-valued skew-symmetric form $\langle\cdot,\cdot\rangle$ on $W_n\otimes_\C \C[z]$ obtained by extension of scalars from the form $\langle\cdot,\cdot\rangle_n$. Hence we have, for all $w,y\in W_n$,
\begin{equation}
\langle X(z) w,y\rangle=\langle w,Y(z)y\rangle,
\end{equation}
or in other words
\begin{equation}
\langle \Delta_nB_{n,n-j+1,n}\Gamma_n w,y\rangle_n=(-1)^{j}\langle w, \Delta_nB_{n,n+j-1,n}\Gamma_n y\rangle_n,
\end{equation}
for $1\leq j\leq n$. By definition of $(\cdot,\cdot)_n$, this is equivalent to the desired statement.
\end{proof}

The proof of Theorem~\ref{thm:involutions} in the $k=n$ case is then as follows.

\begin{proof}
Let $(B_h,\Gamma_i,\Delta_i)\in\Lambda^{\A_{2n-1}}(V,W)^s$. We must prove that 
\begin{equation} \label{eqn:involutions}
\Theta(\Phi_1(\pi(G_V\cdot(B_h,\Gamma_i,\Delta_i))))=\Phi_1(\pi(G_V\cdot \theta(B_h,\Gamma_i,\Delta_i))).
\end{equation}
Recall from Lemma~\ref{lem:maffei-rephrased} that $\Phi_1(\pi(G_V\cdot(B_h,\Gamma_i,\Delta_i)))$ equals $M_nN_n$ where $M_j,N_j$ are obtained from the recursion with the parameters $r_j=\Delta_nB_{n,n-j+1,n}\Gamma_n$. By the definition of $\theta$, the right-hand side of~\eqref{eqn:involutions} equals $M_n'N_n'$ where $M_j',N_j'$ are obtained from the recursion with the parameters $r_j'=\sigma_n\Delta_nB_{n,n+j-1,n}\Gamma_n\sigma_n$. By Lemma~\ref{lem:sums} we have $r_j'=(-1)^jr_j^\bt$. Applying Lemma~\ref{lem:parameters}(1) with $\lambda=-1$ and Lemma~\ref{lem:parameters}(2) with ${}^*={}^{\bt}$, we find that the entries $\alpha_{j;a,b}',\beta_{j;a,b}'$ of $M_j',N_j'$ are related to the entries $\alpha_{j;a,b},\beta_{j;a,b}$ of $M_j,N_j$ by the rule
\begin{equation}
\begin{split}
\alpha_{j;a,b}'&=(-1)^{a-b+1}(\beta_{j;j-b+1,j-a+1})^\bt,\\ 
\beta_{j;a,b}'&=(-1)^{a-b+1}(\alpha_{j;j-b+1,j-a+1})^\bt.
\end{split}
\end{equation}
It follows that if $x_{c,d}\in\End(W_n)$ denotes the $(c,d)$ entry of $M_nN_n$, then the $(c,d)$-entry of $M_n'N_n'$ is $(-1)^{c-d+1}(x_{n-d+1,n-c+1})^\bt$. This means exactly that $\Theta(M_nN_n)=M_n'N_n'$, so~\eqref{eqn:involutions} is proved.
\end{proof}

\subsection{Maffei's isomorphism $\Phi_1$ in the $k<n$ case}

Now we turn to the $k<n$ case. This is fairly similar to the $k=n$ case, but notationally more complex because we have the two vector spaces $W_k$ and $W_{2n-k}$ to keep track of.

We need to modify the recursion defined in \S\ref{subsect:recursion} as follows. Let $m$ be an even positive integer. In addition to the parameters $r_1,r_2,\cdots\in R$, we take three further sequences of parameters $r_j^{0,0},r_j^{0,1},r_j^{1,0}\in R$ for $j\geq 1$. For uniformity of notation, we let $r_j^{1,1}=r_j$. We use these parameters to define $M_j$ and $N_j$ for $j\in\N$, matrices with entries in $R$. For $0\leq j\leq m-1$, the definition of $M_j$ and $N_j$ is exactly as before (so these depend only on $r_1^{1,1},\cdots,r_j^{1,1}$). The entries formerly called $\alpha_{j;a,b},\beta_{j;a,b}$ we rename $\alpha_{j;a,b}^{1,1},\beta_{j;a,b}^{1,1}$.

For $j\geq m$, $M_j$ and $N_j$ are defined recursively by the following three conditions:
\begin{enumerate}
\item[(A)] $M_j$ and $N_j$ have the following block-matrix shape:
\[
M_j=\begin{pmatrix}
M_j^{0,0}&M_j^{0,1}\\M_j^{1,0}&M_j^{1,1}
\end{pmatrix},\
N_j=\begin{pmatrix}
N_j^{0,0}&N_j^{0,1}\\N_j^{1,0}&N_j^{1,1}
\end{pmatrix},
\]
where $M_j^{0,0}$ is a $(j-m)\times(j-m+1)$ matrix, $M_j^{1,1}$ is a $j\times (j+1)$ matrix, $N_j^{0,0}$ is a $(j-m+1)\times(j-m)$ matrix, $N_j^{1,1}$ is a $(j+1)\times j$ matrix, and the sizes of the other blocks are then determined. Moreover, the blocks of $M_j$ have the form:
\[
\begin{split}
M_j^{0,0}&=\begin{pmatrix}
\alpha_{j;1,1}^{0,0}&1_R&0&\cdots&0&0\\
\alpha_{j;2,1}^{0,0}&\alpha_{j;2,2}^{0,0}&1_R&\cdots&0&0\\
\vdots&\vdots&\vdots&\ddots&\vdots&\vdots\\
\alpha_{j;j-m-1,1}^{0,0}&\alpha_{j;j-m-1,2}^{0,0}&\alpha_{j;j-m-1,3}^{0,0}&\cdots&1_R&0\\
\alpha_{j;j-m,1}^{0,0}&\alpha_{j;j-m,2}^{0,0}&\alpha_{j;j-m,3}^{0,0}&\cdots&\alpha_{j;j-m,j-m}^{0,0}&1_R
\end{pmatrix},
\\
M_j^{0,1}&=\begin{pmatrix}
\alpha_{j;1,1}^{0,1}&0&\cdots&0&0&\cdots&0\\
\vdots&\vdots&\ddots&\vdots&\vdots&&\vdots\\
\alpha_{j;j-m-1,1}^{0,1}&\alpha_{j;j-m-1,2}^{0,1}&\cdots&0&0&\cdots&0\\
\alpha_{j;j-m,1}^{0,1}&\alpha_{j;j-m,2}^{0,1}&\cdots&\alpha_{j;j-m,j-m}^{0,1}&0&\cdots&0
\end{pmatrix},
\\
M_j^{1,0}&=\begin{pmatrix}
0&0&\cdots&0&0\\
\vdots&\vdots&&\vdots&\vdots\\
0&0&\cdots&0&0\\
\alpha_{j;m+1,1}^{1,0}&0&\cdots&0&0\\
\vdots&\vdots&\ddots&\vdots&\vdots\\
\alpha_{j;j-1,1}^{1,0}&\alpha_{j;j-1,2}^{1,0}&\cdots&0&0\\
\alpha_{j;j,1}^{1,0}&\alpha_{j;j,2}^{1,0}&\cdots&\alpha_{j;j,j-m}^{1,0}&0
\end{pmatrix},
\\
M_j^{1,1}&=\begin{pmatrix}
\alpha_{j;1,1}^{1,1}&1_R&0&\cdots&0&0\\
\alpha_{j;2,1}^{1,1}&\alpha_{j;2,2}^{1,1}&1_R&\cdots&0&0\\
\vdots&\vdots&\vdots&\ddots&\vdots&\vdots\\
\alpha_{j;j-1,1}^{1,1}&\alpha_{j;j-1,2}^{1,1}&\alpha_{j;j-1,3}^{1,1}&\cdots&1_R&0\\
\alpha_{j;j,1}^{1,1}&\alpha_{j;j,2}^{1,1}&\alpha_{j;j,3}^{1,1}&\cdots&\alpha_{j;j,j}^{1,1}&1_R
\end{pmatrix},
\end{split}
\]
and the blocks of $N_j$ have the form:
\[
\begin{split}
N_j^{0,0}&=\begin{pmatrix}
1_R&0&\cdots&0&0\\
\beta_{j;1,1}^{0,0}&1_R&\cdots&0&0\\
\vdots&\vdots&\ddots&\vdots&\vdots\\
\beta_{j;j-m-2,1}^{0,0}&\beta_{j;j-m-2,2}^{0,0}&\cdots&1_R&0\\
\beta_{j;j-m-1,1}^{0,0}&\beta_{j;j-m-1,2}^{0,0}&\cdots&\beta_{j;j-m-1,j-m-1}^{0,0}&1_R\\
\beta_{j;j-m,1}^{0,0}&\beta_{j;j-m,2}^{0,0}&\cdots&\beta_{j;j-m,j-m-1}^{0,0}&\beta_{j;j-m,j-m}^{0,0}
\end{pmatrix},
\\
N_j^{0,1}&=\begin{pmatrix}
0&0&\cdots&0&0&\cdots&0\\
\beta_{j;1,1}^{0,1}&0&\cdots&0&0&\cdots&0\\
\vdots&\vdots&\ddots&\vdots&\vdots&&\vdots\\
\beta_{j;j-m-1,1}^{0,1}&\beta_{j;j-m-1,2}^{0,1}&\cdots&0&0&\cdots&0\\
\beta_{j;j-m,1}^{0,1}&\beta_{j;j-m,2}^{0,1}&\cdots&\beta_{j;j-m,j-m}^{0,1}&0&\cdots&0
\end{pmatrix},
\\
N_j^{1,0}&=\begin{pmatrix}
0&0&\cdots&0\\
\vdots&\vdots&&\vdots\\
0&0&\cdots&0\\
\beta_{j;m+1,1}^{1,0}&0&\cdots&0\\
\vdots&\vdots&\ddots&\vdots\\
\beta_{j;j-1,1}^{1,0}&\beta_{j;j-1,2}^{1,0}&\cdots&0\\
\beta_{j;j,1}^{1,0}&\beta_{j;j,2}^{1,0}&\cdots&\beta_{j;j,j-m}^{1,0}
\end{pmatrix},
\\
N_j^{1,1}&=\begin{pmatrix}
1_R&0&\cdots&0&0\\
\beta_{j;1,1}^{1,1}&1_R&\cdots&0&0\\
\vdots&\vdots&\ddots&\vdots&\vdots\\
\beta_{j;j-2,1}^{1,1}&\beta_{j;j-2,2}^{1,1}&\cdots&1_R&0\\
\beta_{j;j-1,1}^{1,1}&\beta_{j;j-1,2}^{1,1}&\cdots&\beta_{j;j-1,j-1}^{1,1}&1_R\\
\beta_{j;j,1}^{1,1}&\beta_{j;j,2}^{1,1}&\cdots&\beta_{j;j,j-1}^{1,1}&\beta_{j;j,j}^{1,1}
\end{pmatrix}.
\end{split}
\]
\item[(B)] For all $j\geq m$, we have an equation of $(2j-m)\times (2j-m)$ matrices:
\[
M_jN_j=N_{j-1}M_{j-1}+\begin{pmatrix}
X_j^{0,0}&X_j^{0,1}\\
X_j^{1,0}&X_j^{1,1}
\end{pmatrix},
\]
where $X_j^{0,0}$ is a $(j-m)\times (j-m)$ matrix, $X_j^{1,1}$ is a $j\times j$ matrix, the sizes of $X_j^{0,1}$ and $X_j^{1,0}$ are then determined, and all entries of these matrices are zero except that the lower-left entry of $X_j^{e,f}$ is $r_{j-m}^{e,f}$ for $(e,f)\neq(1,1)$, and the lower-left entry of $X_j^{1,1}$ is $r_j^{1,1}$. (When $j=m$, $X_m^{0,0}$, $X_m^{0,1}$ and $X_m^{1,0}$ are empty matrices.)
\item[(C)] For all $j\geq m$,
\[ N_jM_j-\begin{pmatrix}E^{[j-m+1]}&0\\0&E^{[j+1]}\end{pmatrix}\text{
commutes with }\begin{pmatrix}F^{[j-m+1]}&0\\0&F^{[j+1]}\end{pmatrix}. \]
\end{enumerate}

The analogue of Lemma~\ref{lem:parameters}, proved in the same way, is:

\begin{lem} \label{lem:parametersk<n}
Define the matrices $M_j,N_j$ with entries $\alpha_{j;a,b}^{e,f},\beta_{j;a,b}^{e,f}$ as above using the parameters $r_j^{e,f}$, and similarly define matrices $M_j',N_j'$ with entries $\alpha_{j;a,b}^{\prime,e,f},\beta_{j;a,b}^{\prime,e,f}$ using the parameters $r_j^{\prime,e,f}$.
\begin{enumerate}
\item Let $\lambda\in\C$, $\varepsilon\in\{\pm 1\}$ and suppose 
\[ 
r_j^{\prime,e,f}=\begin{cases}
\lambda^{j} r_j^{e,f}&\text{ if $e=f$,}\\
\varepsilon\lambda^{j+\frac{m}{2}} r_j^{e,f}&\text{ if $e\neq f$.}
\end{cases}
\]
Then
\[
\alpha_{j;a,b}^{\prime,e,f}=\varepsilon^{f-e}\lambda^{a-b+(f-e)\frac{m}{2}+1}\alpha_{j;a,b}^{e,f},\ 
\beta_{j;a,b}^{\prime,e,f}=\varepsilon^{f-e}\lambda^{a-b+(f-e)\frac{m}{2}+1}\beta_{j;a,b}^{e,f}.
\]
\item Let $r\mapsto r^*$ be an anti-automorphism of the $\C$-algebra $R$, and suppose that $r_j^{\prime,e,f}=(r_j^{f,e})^*$ for all $j,e,f$. Then
\[
\begin{split}
\alpha_{j;a,b}^{\prime,e,f}=(\beta_{j;j+(f-1)m-b+1,j+(e-1)m-a+1}^{f,e})^*,\\
\beta_{j;a,b}^{\prime,e,f}=(\alpha_{j;j+(f-1)m-b+1,j+(e-1)m-a+1}^{f,e})^*.
\end{split}
\]
\end{enumerate}
\end{lem}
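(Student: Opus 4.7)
The plan is to mimic the strategy used in the proof of Lemma~\ref{lem:parameters}: since the conditions (A)--(C) of the $k<n$ recursion uniquely determine $M_j', N_j'$ from the primed parameters, it suffices to define candidate matrices by substituting the claimed formulas for $\alpha_{j;a,b}^{\prime,e,f}$ and $\beta_{j;a,b}^{\prime,e,f}$ into the block-matrix shape prescribed by (A), and then verify directly that conditions (B) and (C) hold with the primed parameters. The verifications are straightforward bookkeeping, but must be organised carefully because of the four-block structure.

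For part (1), I would introduce a ``weight'' on the entries of $M_j$ and $N_j$: the $(a,b)$-entry of the $(e,f)$-block is assigned the weight $a-b+(f-e)\frac{m}{2}$. The claimed rescaling then amounts to saying that each entry is multiplied by $\varepsilon^{f-e}\lambda^{\text{weight}+1}$. One checks that the entries fixed to be $1_R$ in (A) lie on the super- or sub-diagonal of diagonal blocks ($e=f$), so they have weight $\pm 1$ and the scaling is compatible. Next, a product of two entries scales multiplicatively in this weight, so both sides of the matrix equation in (B) are block-homogeneous; the parameter $r_{j-m}^{\prime,e,f}$ is inserted at the lower-left of the block $X_j^{e,f}$, and its expected weight $j-m+(f-e)\frac{m}{2}$, together with the shift by $m$ built into the definition of $X_j$, matches the announced rescaling by $\varepsilon^{f-e}\lambda^{(j-m)+(f-e)m/2\,\cdot\,\mathbf{1}_{e\neq f}}$. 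Condition (C) is verified by the same book-keeping: the blocks $E^{[\,\cdot\,]}, F^{[\,\cdot\,]}$ appearing inside the $(0,0)$ and $(1,1)$ diagonal blocks have nonzero entries only at weights $\pm 1$, so the commutator constraint is likewise block-homogeneous.

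For part (2), the key is a structural involution. I would consider the map sending the $(a,b)$-entry of the $(e,f)$-block of $M_j$ to the $(j+(f-1)m-b+1,\,j+(e-1)m-a+1)$-entry of the $(f,e)$-block of $N_j$, followed by application of $^*$. Together with the analogous map in the other direction, this is an involution on the data that swaps the roles of $M_j$ and $N_j$ and reverses all multiplications (because $^*$ is an anti-automorphism). One then checks that under this involution: the $1_R$ entries on the super-diagonals of the $(e,e)$-blocks of $M_j$ are sent to the $1_R$ entries on the sub-diagonals of the $(e,e)$-blocks of $N_j$; the lower-left entry of $X_j^{e,f}$, where $r_{j-m}^{\prime,e,f}=(r_{j-m}^{f,e})^*$ is placed, is sent to the lower-left entry of $X_j^{f,e}$, where $r_{j-m}^{f,e}$ was placed originally; and the product relation (B) with primed data becomes the $^*$-image of the product relation (B) with original data. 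Similarly (C), being self-dual under simultaneous transposition of $N_j M_j$ and exchange of $E^{[\,\cdot\,]}\leftrightarrow F^{[\,\cdot\,]}$ (up to the explicit diagonal scalars in $F^{[\,\cdot\,]}$ which are symmetric under the reflection $i\mapsto m+1-i$ or $i\mapsto j+1-i$), is preserved.

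The main obstacle I expect is the careful combinatorics of the block offsets in part (2): verifying that the reflection $(a,b)\mapsto(j+(f-1)m-b+1,\,j+(e-1)m-a+1)$ really maps the index set of the $(e,f)$-block of $M_j$ bijectively onto that of the $(f,e)$-block of $N_j$, and that the positions of the fixed entries $1_R$ and the base-points of the parameters match up correctly through this reflection. Once this bookkeeping is set up, the rest of the verification parallels the $k=n$ case of Lemma~\ref{lem:parameters} exactly.
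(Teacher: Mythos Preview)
Your approach is exactly that of the paper, which simply says the lemma is ``proved in the same way'' as Lemma~\ref{lem:parameters}: plug the claimed formulas into the block shape (A) and verify that (B) and (C) hold for the primed parameters, with uniqueness of the recursion doing the rest. Your additional organisation via weights in part~(1) and via the index-reflection involution in part~(2) is a helpful way to structure the straightforward bookkeeping; just note a small slip in your description of (B): in the $(1,1)$-block of $X_j$ the inserted parameter is $r_j^{1,1}$, not $r_{j-m}^{1,1}$, so you should treat that block separately when matching weights.
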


Recall that $W_k=W_{2n-k}$ is one-dimensional, so we can identify each of $\End(W_k)$, $\End(W_{2n-k})$, $\Hom(W_k,W_{2n-k})$ and $\Hom(W_{2n-k},W_k)$ with $\C$. To express the relevant special case of Maffei's isomorphism $\Phi_1$, we want to consider the above recursion for $R=\C$ and $m=2n-2k$, up to the stage where it defines $M_{2n-k}$ and $N_{2n-k}$ (so we only need the parameters $r_j^{e,f}$ for $1\leq j\leq k$, $(e,f)\neq (1,1)$, and $r_j^{1,1}$ for $1\leq j\leq 2n-k$). Then $M_{2n-k}N_{2n-k}$, a $2n\times 2n$ complex matrix, can be regarded as an element of $\End(\tV_0)$, with the first $k$ rows and columns corresponding to the $W_k$ summands and the remaining rows and columns corresponding to the $W_{2n-k}$ summands.

\begin{lem} \label{lem:maffei-rephrasedk<n}
Continue the above assumptions, in particular~\eqref{eqn:small} with $k<n$. Let $(B_h,\Gamma_i,\Delta_i)\in\Lambda^{\A_{2n-1}}(V,W)^s$. Then $\Phi_1(\pi(G_V\cdot(B_h,\Gamma_i,\Delta_i)))$ is the matrix $M_{2n-k}N_{2n-k}$ obtained from the above recursion with $R=\C$, $m=2n-2k$ and
\[
\begin{split}
r_j^{0,0}&=\Delta_k B_{k,k-j+1,k}\Gamma_k,\\
r_j^{0,1}&=(-1)^{n-k}\Delta_k B_{k,k-j+1,2n-k}\Gamma_{2n-k},\\
r_j^{1,0}&=\Delta_{2n-k} B_{2n-k,k-j+1,k}\Gamma_k,\\
r_j^{1,1}&=(-1)^{\min\{j-1,n-k\}}\Delta_{2n-k} B_{2n-k,2n-k-j+1,2n-k}\Gamma_{2n-k}.
\end{split}
\]
\end{lem}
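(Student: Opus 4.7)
The plan is to mirror the proof of Lemma~\ref{lem:maffei-rephrased}, keeping track now of the two multiplicity spaces $W_k$ and $W_{2n-k}$ simultaneously. First I would unpack Maffei's general characterization \cite[Lemma 18]{maffei} of the auxiliary configuration~\eqref{eqn:config} in this setting: the relevant auxiliary spaces are
\[
\tV_i = V_i \oplus W_k^{\oplus\max(k-i,0)} \oplus W_{2n-k}^{\oplus\max(2n-k-i,0)},\qquad 0\leq i\leq 2n-1,
\]
with $V_0=0$; note that $\tV_0$ agrees with the space introduced in \S\ref{subsect:setup}. The maps $\tA_i$ and $\tB_i$ are then the unique collection satisfying the direct analogues of conditions (A$'$)--(C$'$) from the proof of Lemma~\ref{lem:maffei-rephrased}, now with two families of auxiliary blocks in play rather than one.

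Next I would exhibit the block-matrix shape of $\tA_i$ and $\tB_i$ in three regimes. For $2n-k\leq i\leq 2n-2$ only the $V_i$ summand remains, and $\tA_i=B_{i+1,i}$, $\tB_i=-B_{i,i+1}$. For $k\leq i\leq 2n-k-1$ only the $W_{2n-k}$ auxiliary summands are present, giving a $2\times 2$ block decomposition entirely analogous to condition (A$'$) in the $k=n$ proof. For $0\leq i\leq k-1$ both auxiliary families appear, giving a $3\times 3$ block decomposition with rows and columns indexed by the $V_i$, $W_k$, and $W_{2n-k}$ parts. Letting $M_j,N_j$ denote the matrices obtained from $\tA_{2n-k-1-j}$ and $\tB_{2n-k-1-j}$ by deleting the $V$-row and $V$-column, one checks by inspection that for $0\leq j\leq m-1=2n-2k-1$ only one auxiliary block survives and we are in the single-block regime of \S\ref{subsect:recursion}, while at $j=m$ the $W_k$ summand first enters, producing exactly the block shape prescribed by (A) of the generalized recursion.

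Conditions (B$'$) and (C$'$) on $\tA_i,\tB_i$ then translate blockwise into the generalized conditions (B) and (C). Reading off the lower-left entries of the difference $\tA_{i+1}\tB_{i+1}-\tB_i\tA_i$ in each of the four block positions $(e,f)$ yields parameters $r_j^{e,f}$ of the form $\Delta_{?} B_{?,\cdots,?} \Gamma_{?}$, with the indices exactly as in the statement. The sign $(-1)^{n-k}$ in $r_j^{0,1}$ records the unique middle-crossing portion common to every path from vertex $2n-k$ to vertex $k$ via $k-j+1$; the sign $(-1)^{\min\{j-1,n-k\}}$ in $r_j^{1,1}$ arises because in $B_{2n-k,2n-k-j+1,2n-k}$ the number of factors whose orientation is reversed (per Remark~\ref{rmk:sign}) saturates at $n-k$ once $j\geq n-k+1$, since only the portion of the path strictly to the right of $n$ contributes. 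Finally, applying condition (B) of the recursion one more time at $j=2n-k$ identifies $M_{2n-k}N_{2n-k}$ with $\tB_0\tA_0 = \Phi_1(\pi(G_V\cdot(B_h,\Gamma_i,\Delta_i)))$, as in the $k=n$ case.

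The main technical obstacle is this sign bookkeeping, combined with verifying that the nested block structure of $\tA_i,\tB_i$ at the transition $i=k$ (where the $W_k$ auxiliary family first appears) really does produce the staircase shapes prescribed by (A) of the generalized recursion, including the zero patterns in the off-diagonal blocks $M_j^{0,1}, M_j^{1,0}, N_j^{0,1}, N_j^{1,0}$. Both are finite combinatorial checks, but delicate enough that one wants to keep Example~\ref{ex:typea-typed-rule} as a sanity check, and to verify the small cases $j=m, m+1$ by hand before writing down the general block expressions.
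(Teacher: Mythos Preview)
Your proposal is correct and takes essentially the same approach as the paper's own proof, which is simply a one-line deferral to the argument of Lemma~\ref{lem:maffei-rephrased} together with an appeal to Remark~\ref{rmk:sign} for the signs. You have spelled out in detail what that deferral entails---the three regimes for $\tA_i,\tB_i$, the deletion of the $V$-row and $V$-column at index $i=2n-k-1-j$, and the sign bookkeeping---which is exactly the content the paper leaves to the reader. One small quibble: the sanity-check you mention at the end, Example~\ref{ex:typea-typed-rule}, concerns the isomorphism $\Psi$ of Theorem~\ref{thm:fixed-points} rather than Maffei's $\Phi_1$, so it is not directly relevant here; verifying the small cases $j=m,m+1$ by hand, as you also suggest, is the more pertinent check.
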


\begin{proof}
This is a straightforward rephrasing of Maffei's definition, along the lines of Lemma~\ref{lem:maffei-rephrased}. The signs are explained by Remark~\ref{rmk:sign}.
\end{proof}

\subsection{Proof of Theorem~\ref{thm:involutions} in the $k<n$ case}
\label{subsect:proofk<n}

Following the pattern of \S\ref{subsect:proofk=n}, we first determine what Lemma~\ref{lem:inverses} implies in this case.

\begin{lem} \label{lem:victory}
Continue the standing assumptions, in particular~\eqref{eqn:small} with $k<n$. Let $(B_h,\Gamma_i,\Delta_i)\in\Lambda^{\A_{2n-1}}(V,W)$. Then for all $j$,
\[
\begin{split}
\Delta_k B_{k,k-j+1,k}\Gamma_k
&=(-1)^j\Delta_{2n-k} B_{2n-k,2n-k+j-1,2n-k}\Gamma_{2n-k},\\
\Delta_k B_{k,k-j+1,2n-k}\Gamma_{2n-k}
&=(-1)^{j-1}\Delta_k B_{k,2n-k+j-1,2n-k}\Gamma_{2n-k},\\
\Delta_{2n-k} B_{2n-k,k-j+1,k}\Gamma_k
&=(-1)^{j-1}\Delta_{2n-k} B_{2n-k,2n-k+j-1,k}\Gamma_k,\\
\Delta_{2n-k} B_{2n-k,2n-k-j+1,2n-k}\Gamma_{2n-k}
&=(-1)^j\Delta_k B_{k,k+j-1,k}\Gamma_k.
\end{split}
\]
\end{lem}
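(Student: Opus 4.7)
The plan is to apply Lemma~\ref{lem:inverses} with $W=W_k\oplus W_{2n-k}$. By the smallness assumption~\eqref{eqn:small}, $\dim W_k=\dim W_{2n-k}=1$, so $\dim W=2$. The elements $X(z),Y(z)\in\End(W)[z]$ are mutually inverse with constant term $\id_W$, so their determinants are polynomials in $\C[z]$ with constant term $1$ whose product is $1$; hence both are identically $1$. Thus $X(z)\in SL_2(\C[z])$, and $Y(z)$ equals its adjugate. Writing $X(z)=\id_W-(X_{a,b}(z))_{a,b\in\{k,2n-k\}}$ and $Y(z)=\id_W+(Y_{a,b}(z))_{a,b\in\{k,2n-k\}}$ relative to the obvious basis, equating $Y(z)$ with the adjugate of $X(z)$ yields four polynomial identities:
\[
Y_{k,k}=-X_{2n-k,2n-k},\ Y_{2n-k,2n-k}=-X_{k,k},\ Y_{k,2n-k}=X_{k,2n-k},\ Y_{2n-k,k}=X_{2n-k,k}.
\]

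Next, each $X_{a,b}(z)$ and $Y_{a,b}(z)$ admits an explicit expansion in compositions of the $B_h$. For example, $X_{k,k}(z)=\sum_{j,l\geq 0}\Delta_k(A^jB^l)|_{V_k\to V_k}\Gamma_k\,z^{j+l+2}$. Since $A$ strictly lowers and $B$ strictly raises the vertex index, the composition $A^jB^l\Gamma_k$ returns to $V_k$ only when $l=j$, tracing a path $V_k\to V_{k+j}\to V_k$ with turning point on the `upper' side. Analogous constraints show the diagonal $Y_{a,a}(z)$ coefficients involve paths $V_a\to V_{a-j}\to V_a$ with turning points on the `lower' side, while the off-diagonal $X$-entries and $Y$-entries are determined respectively by single-turning-point paths connecting $V_k$ and $V_{2n-k}$ through some intermediate $V_m$ with $m\geq 2n-k$ or $m\leq k$.

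The final step is sign-bookkeeping. Each edge $V_i\to V_{i+1}$ enters the definition of $B$ with a minus sign iff $i\geq n$; edges entering $A$ are unsigned. For each path above, one tallies the number of sign-bearing edges traversed. For instance, the path $V_k\to V_{k+j}\to V_k$ contributing to $X_{k,k}(z)$ accumulates the sign $(-1)^{\max(0,j-(n-k))}$, whereas the path $V_{2n-k}\to V_{2n-k+j}\to V_{2n-k}$ contributing to $X_{2n-k,2n-k}(z)$ accumulates $(-1)^j$. Shifting the lemma's index $j$ down by one and matching coefficients of $z^{2j+2}$ (for the diagonal identities) or the appropriate powers (for the mixed identities), the sign totals on the two sides of each adjugate identity combine to yield exactly the sign $(-1)^j$ or $(-1)^{j-1}$ stated in the lemma.

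The main obstacle is keeping the sign computation consistent across the case splits, depending on whether a given path stays strictly below $V_n$, crosses $V_n$ once, or stays strictly above $V_n$; this is a routine case check following the pattern of Lemma~\ref{lem:sums} in the $k=n$ case. The boundary case $j=1$ of the first identity, interpreted with the convention $B_{a,a,a}=\id_{V_a}$, reduces to $\Delta_k\Gamma_k=-\Delta_{2n-k}\Gamma_{2n-k}$, obtained from the $z^2$ coefficient of $Y_{k,k}(z)=-X_{2n-k,2n-k}(z)$.
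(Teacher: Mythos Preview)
Your approach is correct and is essentially the paper's own argument: both apply Lemma~\ref{lem:inverses} with $W=W_k\oplus W_{2n-k}$, observe that since $\dim W=2$ the mutually inverse matrices $X(z),Y(z)$ have determinant~$1$ and hence each equals the adjugate of the other, and then read off the four identities by comparing coefficients entrywise.

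One notational slip to fix: in the paper's conventions $A=\bigoplus B_{i+1,i}$ \emph{raises} the vertex index and $B=\bigoplus\pm B_{i,i+1}$ \emph{lowers} it, so your assignment of ``upper'' paths to $X$ and ``lower'' paths to $Y$ is reversed throughout. This happens not to damage the argument, because the adjugate relation is symmetric in $X$ and $Y$ and the sign attached to an edge between $V_i$ and $V_{i+1}$ depends only on whether $i\geq n$, not on the direction of travel; but you should correct the description before writing it up.
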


\begin{proof}
We can identify $\End(W_k\oplus W_{2n-k})$ with the ring of $2\times 2$ complex matrices, and hence identify $\End(W_k\oplus W_{2n-k})[z]$ with the ring of $2\times 2$ matrices over $\C[z]$. The elements of the latter ring defined in Lemma~\ref{lem:inverses} have the following entries:
\[
\begin{split}
X(z)_{11}&=1-\displaystyle\sum_{j=1}^k\Delta_k B_{k,k-j+1,k}\Gamma_k\, z^{2j},\\
X(z)_{12}&=\displaystyle\sum_{j=1}^k(-1)^{n-k+1}\Delta_k B_{k,k-j+1,2n-k}\Gamma_{2n-k}\, z^{2j+2n-2k},\\
X(z)_{21}&=-\displaystyle\sum_{j=1}^k\Delta_{2n-k} B_{2n-k,k-j+1,k}\Gamma_k\, z^{2j+2n-2k},\\
X(z)_{22}&=1+\displaystyle\sum_{j=1}^{2n-k}(-1)^{\min\{j,n-k+1\}}\Delta_{2n-k} B_{2n-k,2n-k-j+1,2n-k}\Gamma_{2n-k}\, z^{2j},\\
Y(z)_{11}&=1+\displaystyle\sum_{j=1}^{2n-k}(-1)^{j-\min\{j,n-k+1\}}\Delta_k B_{k,k+j-1,k}\Gamma_k\, z^{2j},\\
Y(z)_{12}&=\displaystyle\sum_{j=1}^k(-1)^{n-k+j-1}\Delta_k B_{k,2n-k+j-1,2n-k}\Gamma_{2n-k}\, z^{2j+2n-2k},\\
Y(z)_{21}&=\displaystyle\sum_{j=1}^k(-1)^{j-1}\Delta_{2n-k} B_{2n-k,2n-k+j-1,k}\Gamma_k\, z^{2j+2n-2k},\\
Y(z)_{22}&=1+\displaystyle\sum_{j=1}^k(-1)^{j-1}\Delta_{2n-k} B_{2n-k,2n-k+j-1,2n-k}\Gamma_{2n-k}\, z^{2j}.
\end{split}
\]
Lemma~\ref{lem:inverses} says that $X(z)$ and $Y(z)$ are inverse to each other. So, as in the proof of Lemma~\ref{lem:sums}, their determinants are identically $1$. Hence
\[
\begin{split}
X(z)_{11}&=Y(z)_{22},\\
X(z)_{12}&=-Y(z)_{12},\\
X(z)_{21}&=-Y(z)_{21},\\
X(z)_{22}&=Y(z)_{11},
\end{split}
\]
which gives the result.
\end{proof}

The proof of Theorem~\ref{thm:involutions} in the $k<n$ case is then as follows.

\begin{proof}
Let $(B_h,\Gamma_i,\Delta_i)\in\Lambda^{\A_{2n-1}}(V,W)^s$. We must prove~\eqref{eqn:involutions}.
Recall the description of $\Phi_1(\pi(G_V\cdot(B_h,\Gamma_i,\Delta_i)))$ given in Lemma~\ref{lem:maffei-rephrasedk<n}. By the definition of $\theta$, $\Phi_1(\pi(G_V\cdot\theta(B_h,\Gamma_i,\Delta_i)))$ equals $M_{2n-k}'N_{2n-k}'$ where $M_j',N_j'$ are obtained from the same recursion but with the parameters 
\[
\begin{split}
r_j^{\prime,0,0}&=\Delta_{2n-k} B_{2n-k,2n-k+j-1,2n-k}\Gamma_{2n-k},\\
r_j^{\prime,0,1}&=(-1)^{n-k}\Delta_{2n-k} B_{2n-k,2n-k+j-1,k}\Gamma_{k},\\
r_j^{\prime,1,0}&=\Delta_{k} B_{k,2n-k+j-1,2n-k}\Gamma_{2n-k},\\
r_j^{\prime,1,1}&=(-1)^{\min\{j-1,n-k\}}\Delta_{k} B_{k,k+j-1,k}\Gamma_{k}.
\end{split}
\]
By Lemma~\ref{lem:victory} we have $r_j^{\prime,e,f}=(-1)^jr_j^{f,e}$ when $e=f$ and $r_j^{\prime,e,f}=(-1)^{j+n-k-1}r_j^{f,e}$ when $e\neq f$. Applying Lemma~\ref{lem:parametersk<n}(1) with $\lambda=\varepsilon=-1$ and Lemma~\ref{lem:parametersk<n}(2) with ${}^*$ being the identity, we find that the entries $\alpha_{j;a,b}^{\prime,e,f},\beta_{j;a,b}^{\prime,e,f}$ of $M_j',N_j'$ are related to the entries $\alpha_{j;a,b}^{e,f},\beta_{j;a,b}^{e,f}$ of $M_j,N_j$ by the rule
\begin{equation*}
\begin{split}
\alpha_{j;a,b}^{\prime,0,0}&=(-1)^{a-b+1}\beta_{j;j-2n+2k-b+1,j-2n+2k-a+1}^{0,0},\\
\alpha_{j;a,b}^{\prime,0,1}&=(-1)^{a-b+n-k}\beta_{j;j-b+1,j-2n+2k-a+1}^{1,0},\\
\alpha_{j;a,b}^{\prime,1,0}&=(-1)^{a-b+n-k}\beta_{j;j-2n+2k-b+1,j-a+1}^{0,1},\\
\alpha_{j;a,b}^{\prime,1,1}&=(-1)^{a-b+1}\beta_{j;j-b+1,j-a+1}^{1,1},\\
\beta_{j;a,b}^{\prime,0,0}&=(-1)^{a-b+1}\alpha_{j;j-2n+2k-b+1,j-2n+2k-a+1}^{0,0},\\
\beta_{j;a,b}^{\prime,0,1}&=(-1)^{a-b+n-k}\alpha_{j;j-b+1,j-2n+2k-a+1}^{1,0},\\
\beta_{j;a,b}^{\prime,1,0}&=(-1)^{a-b+n-k}\alpha_{j;j-2n+2k-b+1,j-a+1}^{0,1},\\
\beta_{j;a,b}^{\prime,1,1}&=(-1)^{a-b+1}\alpha_{j;j-b+1,j-a+1}^{1,1}.
\end{split}
\end{equation*}
It follows easily that $\Theta(M_{2n-k}N_{2n-k})=M_{2n-k}'N_{2n-k}'$, so~\eqref{eqn:involutions} is proved.
\end{proof}

\subsection{Lifting Theorem~\ref{thm:involutions} to $\fM^{\A_{2n-1}}(V,W)$}

To deduce from Theorem~\ref{thm:involutions} a similar result for the variety $\fM^{\A_{2n-1}}(V,W)$, we need to make the following further assumption on $\bv$:
\begin{equation} \label{eqn:symmetry}
\begin{split}
&\text{the set }\{s_1+s_2+\cdots+s_i+i,s_1+s_2+\cdots+s_i+(2n-i)\,|\,1\leq i\leq n\}\\
&\text{is stable under the involution }a\mapsto 2n-a.
\end{split}
\end{equation}
Note that the elements of the set in~\eqref{eqn:symmetry} are exactly the dimensions $\dim U_i$, $1\leq i\leq 2n-1$, for partial flags $(U_i)$ in the variety $\cF$ introduced in \S\ref{subsect:setup}. So this assumption ensures that we can lift the involution $\Theta$ of $\fsl(\tV_0)$ to an involution of $T^*\cF$, also denoted $\Theta$, defined by 
\begin{equation}
\Theta(X,(U_i))=(\Theta(X),(U_i)^\perp),
\end{equation} 
where $(U_i)^\perp$ is the flag obtained from $U_i$ by taking perpendicular subspaces relative to the form $(\cdot,\cdot)$. (We have not assumed that $\dim U_{2n-i}=2n-\dim U_i$ or that the subspaces $U_i$ are distinct, so the numbering of the subspaces of $(U_i)^\perp$ may not be given by any obvious rule.) Clearly $\Theta\circ\mu=\mu\circ\Theta$, so we have an induced involution $\Theta$ of $\mu^{-1}(\cS^{\A_{2n-1}}_{(2n-k,k)}\cap\overline{\cO}^{\A_{2n-1}}_{(2n-\ell,\ell)})$.

\begin{cor} \label{cor:involutions}
Continue the above assumptions, in particular~\eqref{eqn:small},~\eqref{eqn:v-symmetry},~\eqref{eqn:nonempty} and~\eqref{eqn:symmetry}.
The involution $\Theta$ of $\mu^{-1}(\cS^{\A_{2n-1}}_{(2n-k,k)}\cap\overline{\cO}^{\A_{2n-1}}_{(2n-\ell,\ell)})$ corresponds, under the isomorphism $\Phi$ of Proposition~\ref{prop:maffei1}, to the diagram involution $\theta$ of $\fM(V,W)$.
\end{cor}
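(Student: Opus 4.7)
The plan is to reduce Corollary~\ref{cor:involutions} directly to Theorem~\ref{thm:involutions} by exploiting the fact that $\mu$ is a birational resolution, so that an identity of morphisms which is already known after composing with $\mu$ can be propagated across the exceptional locus by a density argument.

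First I would verify that $\Theta$ is genuinely a well-defined involution of $T^*\cF$ compatible with $\mu$. Hypothesis~\eqref{eqn:symmetry} is precisely the statement that the multiset of dimensions appearing in a flag of $\cF$ is invariant under $a \mapsto 2n-a$; since $(\cdot,\cdot)$ is nondegenerate on $\tV_0$, the perpendicular filtration $(U_i^\perp)$ has dimensions $2n - \dim U_i$, and after reversing its indexing to convert decreasing to increasing it lies in $\cF$. The assignment $(X,(U_i)) \mapsto (\Theta(X),(U_i)^\perp)$ is then visibly compatible with $\mu$ and preserves $\mu^{-1}(\cS^{\A_{2n-1}}_{(2n-k,k)} \cap \overline{\cO}^{\A_{2n-1}}_{(2n-\ell,\ell)})$, since $\Theta$ preserves each nilpotent orbit and each Slodowy slice built from the $\fsl_2$-triple $(E_0,H_0,F_0) \subset \fsl(\tV_0)^\Theta$.

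With this in hand, I would first establish the weaker identity $\mu \circ \Phi \circ \theta = \mu \circ \Theta \circ \Phi$. This is a short diagram chase: Proposition~\ref{prop:maffei1} gives $\mu \circ \Phi = \Phi_1 \circ \pi$; the diagram automorphism $\theta$ is compatible with $\pi$ by construction in \S\ref{subsect:diag-aut}; Theorem~\ref{thm:involutions} supplies $\Phi_1 \circ \theta = \Theta \circ \Phi_1$; and $\Theta$ is compatible with $\mu$ by the previous paragraph. Chaining these gives the desired equality.

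To upgrade this identity after $\mu$ to an identity before $\mu$, I would work on the locus where $\mu$ is an isomorphism. Since $\mu : T^*\cF \to \overline{\cO}^{\A_{2n-1}}_{(2n-\ell,\ell)}$ is a resolution of singularities, it restricts to an isomorphism over the open orbit $\cO^{\A_{2n-1}}_{(2n-\ell,\ell)}$. Because $\cS^{\A_{2n-1}}_{(2n-k,k)} \cap \overline{\cO}^{\A_{2n-1}}_{(2n-\ell,\ell)}$ is a transverse slice to the orbit of $E_0$ in $\overline{\cO}^{\A_{2n-1}}_{(2n-\ell,\ell)}$, its intersection $U$ with the open orbit is open and dense in it, so $\widetilde{U} := \mu^{-1}(U)$ is open and dense in $\mu^{-1}(\cS \cap \overline{\cO})$ with $\mu|_{\widetilde{U}}$ an isomorphism. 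Since $\Theta$ preserves both $\cS$ and the orbit $\cO$, it preserves $\widetilde{U}$; hence $\fM^\circ := \Phi^{-1}(\widetilde{U})$ is $\theta$-stable and dense in the irreducible variety $\fM^{\A_{2n-1}}(V,W)$ (irreducibility follows from Propositions~\ref{prop:connected} and~\ref{prop:geom-pts}(3)). On $\fM^\circ$, both $\Phi \circ \theta$ and $\Theta \circ \Phi$ land in $\widetilde{U}$ and become equal after composing with the isomorphism $\mu|_{\widetilde{U}}$, so they agree on $\fM^\circ$; by density and separatedness of the target they agree on all of $\fM^{\A_{2n-1}}(V,W)$.

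I expect the only real obstacle to be the first step, namely making precise the perpendicular-flag construction when several consecutive $U_i$ have the same dimension and verifying it respects the Lagrangian fibre structure of $T^*\cF$; once $\Theta$ is set up correctly on $T^*\cF$, the remainder is standard birational propagation.
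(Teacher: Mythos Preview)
Your approach is essentially the same as the paper's: reduce to Theorem~\ref{thm:involutions} via the diagram chase $\mu\circ\Phi\circ\theta=\Theta\circ\mu\circ\Phi$, then use injectivity of $\mu$ over the open orbit and a density argument. The paper phrases this in the target, comparing $\Theta$ with $\Theta':=\Phi\theta\Phi^{-1}$ on the dense open $U\cap\Theta'(U)$, but the content is identical.

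There is one small circularity in your write-up. You assert that since $\Theta$ preserves $\widetilde{U}$, the set $\fM^\circ:=\Phi^{-1}(\widetilde{U})$ is $\theta$-stable. That inference would require $\Phi^{-1}\Theta\Phi=\theta$, which is precisely what you are proving. The claim is nonetheless true: observe that $\fM^\circ=\pi^{-1}(\Phi_1^{-1}(U))$ by the commutative square $\mu\Phi=\Phi_1\pi$, and $\Phi_1^{-1}(U)$ is $\theta$-stable by Theorem~\ref{thm:involutions} together with the $\Theta$-stability of $U$; since $\pi$ intertwines $\theta$, so is $\fM^\circ$. Alternatively, and this is what the paper does, you can bypass the issue entirely by working on $\fM^\circ\cap\theta^{-1}(\fM^\circ)$, which is dense (intersection of two nonempty opens in an irreducible variety) and on which both $\Phi\theta$ and $\Theta\Phi$ manifestly land in $\widetilde{U}$. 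Either fix closes the gap and the rest of your argument goes through unchanged.
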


\begin{proof}
Let $X$ denote the irreducible variety $\mu^{-1}(\cS^{\A_{2n-1}}_{(2n-k,k)}\cap\overline{\cO}^{\A_{2n-1}}_{(2n-\ell,\ell)})$ and write $U$ for its open subvariety $\mu^{-1}(\cS^{\A_{2n-1}}_{(2n-k,k)}\cap\cO^{\A_{2n-1}}_{(2n-\ell,\ell)})$. Temporarily let $\Theta'$ denote the involution $\Phi\theta\Phi^{-1}$ of $X$. We want to prove that $\Theta'(x)=\Theta(x)$ for all $x\in X$, and it suffices to prove this for $x$ in the dense subset $U\cap\Theta'(U)$. But for $x\in U\cap\Theta'(U)$ we have $\Theta'(x),\Theta(x)\in U\subset \mu^{-1}(\cO^{\A_{2n-1}}_{(2n-\ell,\ell)})$, and $\mu$ is injective on $\mu^{-1}(\cO^{\A_{2n-1}}_{(2n-\ell,\ell)})$, so it suffices to prove that $\mu(\Theta'(x))=\mu(\Theta(x))$ for all $x\in U\cap\Theta'(U)$. Using Proposition~\ref{prop:maffei1} and Theorem~\ref{thm:involutions}, we find that
$\mu(\Theta'(x))=\Phi_1(\theta(\pi(\Phi^{-1}(x))))
=\Theta(\Phi_1(\pi(\Phi^{-1}(x))))
=\Theta(\mu(x))
=\mu(\Theta(x))$,
as required.
\end{proof}


\section{Consequences}
\label{sect:conseq}


Continue the notation of the previous section. We now derive consequences from Theorem~\ref{thm:involutions} and Corollary~\ref{cor:involutions} by considering the fixed-point subvarieties of our involutions. Combining this information with Theorem~\ref{thm:fixed-points-AtoD}, we obtain the desired proof of Theorem~\ref{thm:isomorphisms-intro}.

\subsection{Two-row Slodowy slices in types C and D}

The fixed-point subvariety $(\cS^{\A_{2n-1}}_{(2n-k,k)})^\Theta$ is by definition a Slodowy slice in the nilpotent cone of either $\fsp_{2n}$ or $\fso_{2n}$, according to whether the form $(\cdot,\cdot)$ on $\tV_0$ is skew-symmetric or symmetric. Recall from \S\ref{subsect:form} that the cases in which $(\cdot,\cdot)$ is skew-symmetric are the following: 
\begin{equation} \label{eqn:skew-symm}
\begin{cases}
\text{$k<n$ and $k$ is even, }\textbf{or}\\
\text{$k=n$ is even and $(w_+,w_-)=(1,1)$, }\textbf{or}\\
\text{$k=n$ is odd and $(w_+,w_-)=(2,0)$ or $(0,2)$.}
\end{cases}
\end{equation} 
Therefore the cases in which $(\cdot,\cdot)$ is symmetric are the following:
\begin{equation} \label{eqn:symm}
\begin{cases}
\text{$k<n$ and $k$ is odd, }\textbf{or}\\
\text{$k=n$ is odd and $(w_+,w_-)=(1,1)$, }\textbf{or}\\
\text{$k=n$ is even and $(w_+,w_-)=(2,0)$ or $(0,2)$.}
\end{cases}
\end{equation} 
We write $(\cS^{\A_{2n-1}}_{(2n-k,k)})^\Theta$ as $\cS^{\mathrm{C}_n}_{(2n-k,k)}$ if~\eqref{eqn:skew-symm} holds and as $\cS^{\D_n}_{(2n-k,k)}$ if~\eqref{eqn:symm} holds. The two-row partition $(2n-k,k)$ necessarily labels a nilpotent orbit of $\fsp_{2n}$ in the first case or $\fso_{2n}$ in the second case: namely, the orbit of $E_0$. Recall that there is no nilpotent orbit of $\fsp_{2n}$ labelled by $(2n-k,k)$ if $k$ is odd and $k<n$, and no nilpotent orbit of $\fso_{2n}$ labelled by $(2n-k,k)$ if $k$ is even and $k<n$. (In other words, in these cases there are no nilpotent elements of these Lie algebras of Jordan type $(2n-k,k)$.) Recall also that if $n$ is even the partition $(n,n)$ labels two $SO_{2n}$-orbits in the nilpotent cone of $\fso_{2n}$, which form a single $O_{2n}$-orbit; of course, the Slodowy slices associated to two such orbits are isomorphic. 

The partition $(2n-\ell,\ell)$ need not label a nilpotent orbit of the appropriate type; however, to avoid introducing even more cases, we abuse notation slightly and write $(\overline{\cO}^{\A_{2n-1}}_{(2n-\ell,\ell)})^\Theta$ as $\overline{\cO}^{\mathrm{C}_n}_{(2n-\ell,\ell)}$ whenever~\eqref{eqn:skew-symm} holds and as $\overline{\cO}^{\D_n}_{(2n-\ell,\ell)}$ whenever~\eqref{eqn:symm} holds. Thus, for instance, if~\eqref{eqn:skew-symm} holds and $\ell<n$ is odd, the notation $\overline{\cO}^{\mathrm{C}_n}_{(2n-\ell,\ell)}$ means the same as $\overline{\cO}^{\mathrm{C}_n}_{(2n-\ell-1,\ell+1)}$, namely the closure of the nilpotent orbit in $\fsp_{2n}$ labelled by $(2n-\ell-1,\ell+1)$.

As an immediate corollary of Theorem~\ref{thm:involutions} and Proposition~\ref{prop:reductive} we have:

\begin{thm} \label{thm:downstairs}
Continue the assumptions~\eqref{eqn:small},~\eqref{eqn:v-symmetry},~\eqref{eqn:nonempty} on $\bv$ and $\bw$. The isomorphism $\Phi_1$ of Proposition~\ref{prop:maffei1} restricts to an isomorphism
\[
\fM_1^{\A_{2n-1}}(V,W)^\theta\cong
\begin{cases} 
\cS^{\mathrm{C}_n}_{(2n-k,k)}\cap\overline{\cO}^{\mathrm{C}_n}_{(2n-\ell,\ell)}&\text{ if~\eqref{eqn:skew-symm} holds,}\\
\cS^{\D_n}_{(2n-k,k)}\cap\overline{\cO}^{\D_n}_{(2n-\ell,\ell)}&\text{ if~\eqref{eqn:symm} holds,}
\end{cases}
\]
under which $[0]$ corresponds to the base-point $E_0$. The action of $G_W^{\theta\sim}$ on the left-hand side corresponds to the action of $Z_{G_{(\cdot,\cdot)}}(E_0,H_0,F_0)$ on the right-hand side.
\end{thm}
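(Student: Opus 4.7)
The plan is to observe that Theorem~\ref{thm:downstairs} is essentially a formal consequence of Theorem~\ref{thm:involutions} together with the definitions of the notations $\cS^{\mathrm{C}_n}_{(2n-k,k)}$, $\cS^{\D_n}_{(2n-k,k)}$, $\overline{\cO}^{\mathrm{C}_n}_{(2n-\ell,\ell)}$, $\overline{\cO}^{\D_n}_{(2n-\ell,\ell)}$ introduced at the start of this section. First, Theorem~\ref{thm:involutions} gives the intertwining identity $\Phi_1\circ\theta=\Theta\circ\Phi_1$ on $\fM_1^{\A_{2n-1}}(V,W)$. Since $\Phi_1$ is an isomorphism of varieties, it therefore restricts to an isomorphism between the fixed-point subvarieties of the two involutions, namely
\[
\fM_1^{\A_{2n-1}}(V,W)^\theta \;\simto\; \bigl(\cS^{\A_{2n-1}}_{(2n-k,k)}\cap\overline{\cO}^{\A_{2n-1}}_{(2n-\ell,\ell)}\bigr)^\Theta.
\]

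Next I would identify the right-hand side. As recalled in \S\ref{subsect:form}, $\Theta$ preserves $\cS^{\A_{2n-1}}_{(2n-k,k)}$ (because $E_0,H_0,F_0\in\fsl(\tV_0)^\Theta$) and preserves each nilpotent orbit of $\fsl(\tV_0)$, so taking $\Theta$-fixed points commutes with the intersection:
\[
\bigl(\cS^{\A_{2n-1}}_{(2n-k,k)}\cap\overline{\cO}^{\A_{2n-1}}_{(2n-\ell,\ell)}\bigr)^\Theta
= \bigl(\cS^{\A_{2n-1}}_{(2n-k,k)}\bigr)^\Theta \cap \bigl(\overline{\cO}^{\A_{2n-1}}_{(2n-\ell,\ell)}\bigr)^\Theta.
\]
By the very definitions of the symbols $\cS^{\mathrm{C}_n}_{(2n-k,k)}$, $\cS^{\D_n}_{(2n-k,k)}$, $\overline{\cO}^{\mathrm{C}_n}_{(2n-\ell,\ell)}$, $\overline{\cO}^{\D_n}_{(2n-\ell,\ell)}$ given above, together with the case analysis~\eqref{eqn:skew-symm} and~\eqref{eqn:symm} for the type of the bilinear form $(\cdot,\cdot)$, this intersection equals $\cS^{\mathrm{C}_n}_{(2n-k,k)}\cap\overline{\cO}^{\mathrm{C}_n}_{(2n-\ell,\ell)}$ in the skew-symmetric case and $\cS^{\D_n}_{(2n-k,k)}\cap\overline{\cO}^{\D_n}_{(2n-\ell,\ell)}$ in the symmetric case.

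The base-point correspondence $[0]\leftrightarrow E_0$ is already contained in Proposition~\ref{prop:maffei1}, and both points are obviously fixed by the respective involutions. For the group-action statement, I would combine two facts. First, Proposition~\ref{prop:maffei1} asserts that the $G_W$-action on $\fM_1^{\A_{2n-1}}(V,W)$ corresponds under $\Phi_1$ to the $Z_{GL(\tV_0)}(E_0,H_0,F_0)$-action on the Slodowy variety, once $G_W$ is identified with $Z_{GL(\tV_0)}(E_0,H_0,F_0)$ as in \S\ref{subsect:setup}. Second, Proposition~\ref{prop:reductive} identifies the subgroup $G_W^{\theta\sim}\subset G_W$ with $Z_{G_{(\cdot,\cdot)}}(E_0,H_0,F_0)\subset Z_{GL(\tV_0)}(E_0,H_0,F_0)$. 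Restricting the group-equivariant isomorphism $\Phi_1$ to the fixed-point loci, on which the actions of $G_W^{\theta\sim}$ and $Z_{G_{(\cdot,\cdot)}}(E_0,H_0,F_0)$ respectively are well defined, yields the required equivariance.

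There is no genuine obstacle here; all the hard work has been done in the proof of Theorem~\ref{thm:involutions} and in Proposition~\ref{prop:reductive}. The only point requiring a moment's care is verifying that $G_W^{\theta\sim}$ does indeed act on $\fM_1^{\A_{2n-1}}(V,W)^\theta$ (and similarly for $Z_{G_{(\cdot,\cdot)}}(E_0,H_0,F_0)$ on the Slodowy side), which follows from the fact that any $\alpha\in G_W^{\theta\sim}$ satisfies $\theta(\alpha)=\lambda\alpha$ with $\lambda\in\C^\times$ acting trivially on $\fM_1^{\A_{2n-1}}(V,W)$, as observed in \S\ref{subsect:lagrangian}.
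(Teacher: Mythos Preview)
Your proposal is correct and matches the paper's approach exactly: the paper states Theorem~\ref{thm:downstairs} as an immediate corollary of Theorem~\ref{thm:involutions} and Proposition~\ref{prop:reductive}, which is precisely what you have unpacked. You have simply written out in full the one-line deduction the authors left implicit.
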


One interesting consequence of this result is:

\begin{cor} \label{cor:column}
Suppose that either $k$ is even or $k=n$. Then there is a variety isomorphism $\cS^{\mathrm{C}_n}_{(2n-k,k)}\cong\cS^{\D_{n+1}}_{(2n-k+1,k+1)}$ under which the base-points correspond.
\end{cor}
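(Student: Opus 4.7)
The plan is to derive Corollary~\ref{cor:column} from Theorem~\ref{thm:downstairs} by realizing both Slodowy slices as $\fM_1$ of suitable type-$\A$ quiver varieties modulo a diagram involution, and then exhibiting an explicit algebraic isomorphism between the two such realizations.

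First I would specialize Theorem~\ref{thm:downstairs} to the $\A_{2n-1}$ quiver with $\bv=(1,2,\ldots,k-1,k,k,\ldots,k,k-1,\ldots,2,1)$---the unique choice making all $s_i=0$, so that $\ell=0$---and $\bw$ as in~\eqref{eqn:small}; under the hypothesis that $k$ is even or $k=n$, I would choose the involution $\sigma_n$ (when $k=n$) so that~\eqref{eqn:skew-symm} holds. Since $\overline{\cO}^{\A_{2n-1}}_{(2n,0)}$ is the full nilpotent cone of $\fsl_{2n}$, Theorem~\ref{thm:downstairs} yields $\fM_1^{\A_{2n-1}}(V,W)^\theta\cong\cS^{\mathrm{C}_n}_{(2n-k,k)}$. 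Repeating the same construction with $n$ and $k$ replaced by $n+1$ and $k+1$ (so $k+1$ is odd when $k<n$ is even, or $k+1=n+1$ when $k=n$), for appropriate data $(\bv',\bw',\sigma'_{n+1})$ falling into case~\eqref{eqn:symm}, Theorem~\ref{thm:downstairs} gives $\fM_1^{\A_{2n+1}}(V',W')^{\theta'}\cong\cS^{\D_{n+1}}_{(2n-k+1,k+1)}$.

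The remaining step is to construct an algebraic isomorphism $\fM_1^{\A_{2n-1}}(V,W)^\theta\cong\fM_1^{\A_{2n+1}}(V',W')^{\theta'}$. A natural approach is to transport the problem back to the Slodowy-slice side via Maffei's isomorphism of Proposition~\ref{prop:maffei1} together with Theorem~\ref{thm:involutions}: the two fixed-point loci correspond to the fixed subvarieties of the two explicit negative-transpose involutions $\Theta$ on the nilpotent parts of $\cS^{\A_{2n-1}}_{(2n-k,k)}$ and $\cS^{\A_{2n+1}}_{(2n-k+1,k+1)}$, respectively. I would then write down an explicit regular map between these two nilpotent Slodowy slices---in the spirit of the Kraft--Procesi row/column-removal construction, but made algebraic by exploiting the compatibility with the ambient $\fsl_2$-triples---and verify that it intertwines the two $\Theta$'s, whence it restricts to the desired isomorphism on the fixed loci.

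The main obstacle is this last step. The underlying varieties $\fM_1^{\A_{2n-1}}(V,W)$ and $\fM_1^{\A_{2n+1}}(V',W')$ are themselves not isomorphic, as they are nilpotent Slodowy slices in $\fsl_{2n}$ and $\fsl_{2n+2}$ of different dimension; the required isomorphism emerges only after passing to $\theta$-fixed points. Producing the explicit intertwiner and verifying its bijectivity requires careful bookkeeping of the bilinear-form data and the block decomposition of $\tV_0$, and it is here that the bulk of the work---and the genuine strengthening of the analytic/smooth results of~\cite{kp,lns} to an algebraic isomorphism---must be carried out.
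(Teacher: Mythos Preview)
Your setup of the two applications of Theorem~\ref{thm:downstairs} is correct, but the proposed final step is essentially circular and misses the key observation that makes the corollary immediate.

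The problem is your claim that ``the underlying varieties $\fM_1^{\A_{2n-1}}(V,W)$ and $\fM_1^{\A_{2n+1}}(V',W')$ are themselves not isomorphic''. This is only true because you have implicitly taken $\bv'$ to be the all-$s_i=0$ dimension vector for the $\A_{2n+1}$ quiver with parameter $k'=k+1$, namely $\bv'=(1,2,\ldots,k,k+1,\ldots,k+1,k,\ldots,2,1)$. With that choice the ambient type-$\A$ slices really do have different dimensions, and you are then forced to seek an isomorphism that only appears after restricting to $\Theta$-fixed points. But constructing such an isomorphism directly on the Slodowy-slice side \emph{is} the content of the corollary: your ``explicit intertwiner in the spirit of Kraft--Procesi'' is precisely the map whose existence you are trying to establish, and nothing in the quiver-variety machinery you have invoked contributes to building it.

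The paper's trick is to choose $\bv'$ differently: take
\[
\bv'=(0,1,2,\ldots,k-1,k,\ldots,k,k-1,\ldots,2,1,0),
\]
i.e.\ simply pad $\bv$ with a zero at each end, and shift $\bw$ accordingly so that the nonzero entries sit at positions $k+1$ and $2(n+1)-(k+1)$. This still has the correct $k'=k+1$ for the $\A_{2n+1}$ setup, and one computes $\ell'=1$, so that $\overline{\cO}^{\D_{n+1}}_{(2n+1,1)}$ is the full nilpotent cone of $\fso_{2n+2}$ and Theorem~\ref{thm:downstairs} again yields the whole slice $\cS^{\D_{n+1}}_{(2n-k+1,k+1)}$. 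With this choice, appending zero vector spaces at the two end vertices gives a trivial isomorphism $\fM_1^{\A_{2n-1}}(\bv,\bw)\cong\fM_1^{\A_{2n+1}}(\bv',\bw')$ of the \emph{ambient} affine quiver varieties, and it visibly intertwines the two diagram involutions (when $k=n$ one uses the same $\sigma$ on the middle space, and the parity conditions in~\eqref{eqn:skew-symm} and~\eqref{eqn:symm} match up). Passing to fixed points then gives the corollary for free. The missing idea in your proposal is exactly this freedom in the choice of $\bv'$.
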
 

\begin{proof}
Note that $\cS^{\mathrm{C}_n}_{(2n-k,k)}=\cS^{\mathrm{C}_n}_{(2n-k,k)}\cap\overline{\cO}^{\mathrm{C}_n}_{(2n)}$ and $\cS^{\D_{n+1}}_{(2n-k+1,k+1)}=\cS^{\D_{n+1}}_{(2n-k+1,k+1)}\cap\overline{\cO}^{\D_{n+1}}_{(2n+1,1)}$.
Hence as cases of Theorem~\ref{thm:downstairs} we have $\cS^{\mathrm{C}_n}_{(2n-k,k)}\cong\fM_1^{\A_{2n-1}}(\bv,\bw)^\theta$ and $\cS^{\D_{n+1}}_{(2n-k+1,k+1)}\cong\fM_1^{\A_{2n+1}}(\bv',\bw')^\theta$, where the $(2n-1)$-tuples $\bv,\bw$ and the $(2n+1)$-tuples $\bv',\bw'$ are defined by
\[
\begin{split}
\bv&=(1,2,\cdots,k-1,k,k,\cdots,k,k-1,\cdots,2,1),\\
\bw&=\begin{cases}
&(0,\cdots,0,1,0,\cdots,0,1,0,\cdots,0)\\
&\quad\text{ if $k<n$, where the $1$s are in positions $k$ and $2n-k$,}\\
&(0,\cdots,0,2,0,\cdots,0)\\
&\quad\text{ if $k=n$, where the $2$ is in position $n$,}
\end{cases}\\
\bv'&=(0,1,2,\cdots,k-1,k,k,\cdots,k,k-1,\cdots,2,1,0),\\
\bw'&=\begin{cases}
&(0,\cdots,0,1,0,\cdots,0,1,0,\cdots,0)\\
&\quad\text{ if $k<n$, where the $1$s are in positions $k+1$ and $2n-k+1$,}\\
&(0,\cdots,0,2,0,\cdots,0)\\
&\quad\text{ if $k=n$, where the $2$ is in position $n+1$,}
\end{cases}\\
\end{split}
\]
and, if $k=n$, the signature $(w_+,w_-)$ of the involution used to define $\theta$ is $(1,1)$ if $n$ is even and $(2,0)$ if $n$ is odd. But for these dimension vectors we have an obvious isomorphism $\fM_1^{\A_{2n-1}}(\bv,\bw)\cong\fM_1^{\A_{2n+1}}(\bv',\bw')$ respecting the diagram involutions, given by appending zero vector spaces at both ends of the type $\A_{2n-1}$ quiver.
\end{proof}

As mentioned in the introduction, it was previously known only that the singularities of $\cS^{\mathrm{C}_n}_{(2n-k,k)}$ and $\cS^{\D_{n+1}}_{(2n-k+1,k+1)}$ at the base-point were smoothly equivalent (a special case of the Kraft--Procesi column removal rule~\cite[Proposition 13.5]{kp}) and isomorphic as complex analytic germs (part of~\cite[Proposition 12.1]{lns}).

\subsection{Resolutions of two-row Slodowy slices in types C and D}
\label{subsect:the-end}

In deriving consequences of Corollary~\ref{cor:involutions}, we make the simplifying assumption that $s_i=0$ for all $i$, or in other words
\begin{equation}\label{eqn:v-special}
\bv=(1,2,\cdots,k-1,k,k,\cdots,k,k-1,\cdots,2,1),
\end{equation}
which certainly entails~\eqref{eqn:v-symmetry},~\eqref{eqn:nonempty} and~\eqref{eqn:symmetry}.
Then $\cF$ is the variety of complete flags, and $\mu:T^*\cF\to\overline{\cO}^{\A_{2n-1}}_{(2n)}$ is the Springer resolution of the nilpotent cone of $\fsl_{2n}$. If~\eqref{eqn:skew-symm} holds, then $(T^*\cF)^\Theta$ is by definition the Springer resolution of the nilpotent cone of $\fsp_{2n}$, and $\mu^{-1}(\cS^{\A_{2n-1}}_{(2n-k,k)})^\Theta$ equals $\tcS^{\mathrm{C}_n}_{(2n-k,k)}$, the Springer resolution of the Slodowy slice $\cS^{\mathrm{C}_n}_{(2n-k,k)}$. By contrast, if~\eqref{eqn:symm} holds, then $(T^*\cF)^\Theta$ is the disconnected union of two copies of the Springer resolution of the nilpotent cone of $\fso_{2n}$, because the variety of complete isotropic flags is the flag variety of $O_{2n}$ rather than that of $SO_{2n}$ (which consists of isotropic flags that are complete except for lacking a middle-dimensional subspace.) Hence $\mu^{-1}(\cS^{\A_{2n-1}}_{(2n-k,k)})^\Theta$ is the disconnected union of two copies of $\tcS^{\D_n}_{(2n-k,k)}$, the Springer resolution of the Slodowy slice $\cS^{\D_n}_{(2n-k,k)}$. Recall that we write $\cB^{\mathrm{C}_n}_{(2n-k,k)}$ and $\cB^{\D_n}_{(2n-k,k)}$ for the Springer fibres at the base-point of these slices.

From Propositions~\ref{prop:maffei1} and~\ref{prop:reductive} and Corollary~\ref{cor:involutions} we immediately obtain:

\begin{thm} \label{thm:upstairs}
Continue the assumptions~\eqref{eqn:small} and~\eqref{eqn:v-special} on $\bv$ and $\bw$. The isomorphism $\Phi$ of Proposition~\ref{prop:maffei1} restricts to an isomorphism
\[
\fM^{\A_{2n-1}}(V,W)^\theta\cong
\begin{cases} 
\tcS^{\mathrm{C}_n}_{(2n-k,k)}&\text{ if~\eqref{eqn:skew-symm} holds,}\\
\tcS^{\D_n}_{(2n-k,k)}\coprod \tcS^{\D_n}_{(2n-k,k)}&\text{ if~\eqref{eqn:symm} holds.}
\end{cases}
\]
Under this isomorphism, the subvariety $\fL^{\A_{2n-1}}(V,W)^\theta$ corresponds to $\cB^{\mathrm{C}_n}_{(2n-k,k)}$ or $\cB^{\D_n}_{(2n-k,k)}\coprod\cB^{\D_n}_{(2n-k,k)}$ as appropriate, and the action of $G_W^{\theta\sim}$ on the left-hand side corresponds to the action of $Z_{G_{(\cdot,\cdot)}}(E_0,H_0,F_0)$ on the right-hand side. 
\end{thm}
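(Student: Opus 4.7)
The plan is to assemble the theorem directly from three earlier results: Proposition~\ref{prop:maffei1}, Corollary~\ref{cor:involutions}, and Proposition~\ref{prop:reductive}. The assumption~\eqref{eqn:v-special} gives $s_i=0$ for all $i$, hence $\ell=0$, so $\overline{\cO}^{\A_{2n-1}}_{(2n-\ell,\ell)}$ is the entire nilpotent cone of $\fsl_{2n}$, $\cF$ is the variety of complete flags, and $\mu$ is the Springer resolution. Corollary~\ref{cor:involutions} asserts that $\Phi$ intertwines $\theta$ on $\fM^{\A_{2n-1}}(V,W)$ with $\Theta$ on $\mu^{-1}(\cS^{\A_{2n-1}}_{(2n-k,k)})$. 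Passing to fixed points on both sides then reduces the theorem to identifying
\[
\mu^{-1}(\cS^{\A_{2n-1}}_{(2n-k,k)})^{\Theta}=(T^{*}\cF)^{\Theta}\cap\mu^{-1}(\cS^{\A_{2n-1}}_{(2n-k,k)})^{\Theta}
\]
with the stated Springer resolution(s) of a two-row Slodowy slice in type C$_n$ or D$_n$.

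First I would analyse the fixed locus $(T^*\cF)^\Theta$. By the definition of $\Theta$ on $T^*\cF$, a pair $(X,(U_i))$ is $\Theta$-fixed if and only if $X\in\fsl(\tV_0)^\Theta$ and $(U_i)=(U_i)^\perp$, i.e.\ $U_{2n-i}=U_i^\perp$ for all $i$. The subvariety of complete flags satisfying this isotropy condition is the flag variety of the isometry group of $(\cdot,\cdot)$. When~\eqref{eqn:skew-symm} holds this is the flag variety of $Sp_{2n}$, which is connected, and $(T^*\cF)^\Theta$ is the cotangent bundle of this flag variety — the Springer resolution of the nilpotent cone of $\fsp_{2n}$. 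When~\eqref{eqn:symm} holds the isometry group is $O_{2n}$, whose flag variety consists of two $SO_{2n}$-orbits (the two connected components arise from the choice of Lagrangian at the middle step), so $(T^*\cF)^\Theta$ is a disjoint union of two copies of the Springer resolution of the nilpotent cone of $\fso_{2n}$.

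Intersecting with $\mu^{-1}(\cS^{\A_{2n-1}}_{(2n-k,k)})$ and using the fact that the Slodowy triple $(E_0,H_0,F_0)$ lies in $\fsl(\tV_0)^\Theta$ (so $\cS^{\A_{2n-1}}_{(2n-k,k)}\cap\fsl(\tV_0)^\Theta$ is exactly the corresponding Slodowy slice $\cS^{\mathrm{C}_n}_{(2n-k,k)}$ or $\cS^{\D_n}_{(2n-k,k)}$) gives the desired descriptions $\tcS^{\mathrm{C}_n}_{(2n-k,k)}$ or $\tcS^{\D_n}_{(2n-k,k)}\coprod\tcS^{\D_n}_{(2n-k,k)}$. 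For the projective part, observe that $\fL^{\A_{2n-1}}(V,W)=\pi^{-1}([0])$ and Proposition~\ref{prop:maffei1} gives $\Phi_1([0])=E_0$, so $\fL^{\A_{2n-1}}(V,W)^\theta$ corresponds under $\Phi$ to $\mu^{-1}(E_0)^\Theta$, which is precisely $\cB^{\mathrm{C}_n}_{(2n-k,k)}$ or $\cB^{\D_n}_{(2n-k,k)}\coprod\cB^{\D_n}_{(2n-k,k)}$ by the same component analysis. Finally, the statement about group actions follows by combining the identification of the $G_W$-action with the $Z_{GL(\tV_0)}(E_0,H_0,F_0)$-action from Proposition~\ref{prop:maffei1} (which extends to the $\theta$/$\Theta$-twisted versions by naturality, since $\Theta$ acts on $GL(\tV_0)$ by the same negative-transpose formula) with the identification $G_W^{\theta\sim}=Z_{G_{(\cdot,\cdot)}}(E_0,H_0,F_0)$ supplied by Proposition~\ref{prop:reductive}.

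The only step that requires genuine care is the component count for $(T^*\cF)^\Theta$ in the orthogonal case: one needs to know that the two $O_{2n}$-components of the complete isotropic flag variety each map isomorphically onto the $SO_{2n}$ flag variety (which lacks the middle-dimensional step), and that $\Theta$ does not swap them because $\Theta$ acts trivially on the Lie algebra $\fsl(\tV_0)^\Theta$ by definition. Everything else is a formal consequence of the cited results; I expect no further obstacle.
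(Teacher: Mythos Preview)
Your proposal is correct and follows essentially the same approach as the paper: the theorem is stated there as an immediate consequence of Propositions~\ref{prop:maffei1} and~\ref{prop:reductive} together with Corollary~\ref{cor:involutions}, and the analysis of $(T^*\cF)^\Theta$ you give (complete isotropic flags, one component in the symplectic case, two in the orthogonal case) is exactly what the paper spells out in the paragraph preceding the theorem. Your final remark about $\Theta$ ``not swapping'' the two components is a bit muddled---$(T^*\cF)^\Theta$ is already the fixed locus, so there is nothing for $\Theta$ to swap---but this is harmless commentary and does not affect the argument.
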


We can then deduce Theorem~\ref{thm:isomorphisms-intro} as follows.

\begin{proof}
Replace $\fM^{\A_{2n-1}}(V,W)^\theta$ in Theorem~\ref{thm:upstairs} with the expression for it as a disconnected union of varieties $\fM^{\D_{n+1}}((v_1,\cdots,v_{n-1},v_+,v_-),(w_1,\cdots,w_{n-1},w_+,w_-))$ given by Theorem~\ref{thm:fixed-points-AtoD}. If $k=n$,~\eqref{eqn:skew-symm} and~\eqref{eqn:symm} dictate whether to take $(w_+,w_-)=(1,1)$ or $(w_+,w_-)\in\{(2,0),(0,2)\}$; in the latter case, for definiteness we set $(w_+,w_-)=(0,2)$. Exchanging the $+$ and $-$ nodes of the Dynkin diagram obviously gives an isomorphism of quiver varieties.

One has to determine which choices of $(v_+,v_-)$ with $v_++v_-=k$ give nonempty varieties $\fM^{\D_{n+1}}((v_1,\cdots,v_{n-1},v_+,v_-),(w_1,\cdots,w_{n-1},w_+,w_-))$, which is a simple calculation using the type D analogue of~\cite[Lemma 7]{maffei}. There is no need to consider all $k+1$ possibilities since we already know that, if~\eqref{eqn:skew-symm} holds, there is a unique nonempty variety $\fM^{\D_{n+1}}(\cdot,\cdot)$ (which is then isomorphic to $\tcS^{\mathrm{C}_n}_{(2n-k,k)}$); and, if~\eqref{eqn:symm} holds, there are two nonempty varieties $\fM^{\D_{n+1}}(\cdot,\cdot)$ (each of which is isomorphic to $\tcS^{\D_n}_{(2n-k,k)}$). In the latter case, we chose one of the two possible quiver varieties arbitrarily for the isomorphism statement of Theorem~\ref{thm:isomorphisms-intro}. The alternative choices are given by exchanging $v_+$ and $v_-$ if $k$ is odd, and as follows if $k$ is even:
\begin{equation} \label{eqn:extra}
\tcS^{\D_n}_{(n,n)}\cong \fM^{\D_{n+1}}((1,2,\cdots,n-1,{\textstyle\frac{n-2}{2}},{\textstyle\frac{n+2}{2}}),(0,\cdots,0,2)).
\end{equation}
Each isomorphism $\tcS^{\mathrm{C}_n/\D_n}_{(2n-k,k)}\cong \fM^{\D_{n+1}}(\cdot,\cdot)$ listed in Theorem~\ref{thm:isomorphisms-intro} (or in~\eqref{eqn:extra}) induces the analogous isomorphism $\cB^{\mathrm{C}_n/\D_n}_{(2n-k,k)}\cong\fL^{\D_{n+1}}(\cdot,\cdot)$, as one sees by combining the corresponding statement of Theorem~\ref{thm:upstairs} with Proposition~\ref{prop:lagrangian}(2).

To obtain the analogous isomorphisms of affine varieties, we can argue as follows. The projective morphism $\pi:\fM^{\D_{n+1}}(\cdot,\cdot)\to\fM_1^{\D_{n+1}}(\cdot,\cdot)$ is, by definition, the ``affinization'' map of $\fM^{\D_{n+1}}(\cdot,\cdot)$, that is, the unique map from $\fM^{\D_{n+1}}(\cdot,\cdot)$ to an affine variety that induces an isomorphism of rings of regular functions. Since each Slodowy slice $\cS^{\mathrm{C}_n/\D_n}_{(2n-k,k)}$ is a normal variety (this follows from the well-known normality of the nilpotent cone), the projective resolution map $\mu:\tcS^{\mathrm{C}_n/\D_n}_{(2n-k,k)}\to\cS^{\mathrm{C}_n/\D_n}_{(2n-k,k)}$ is the affinization map of $\tcS^{\mathrm{C}_n/\D_n}_{(2n-k,k)}$. So each isomorphism $\fM^{\D_{n+1}}(\cdot,\cdot)\simto\tcS^{\mathrm{C}_n/\D_n}_{(2n-k,k)}$ induces a unique isomorphism $\fM_1^{\D_{n+1}}(\cdot,\cdot)\simto\cS^{\mathrm{C}_n/\D_n}_{(2n-k,k)}$ making the following diagram commute:
\begin{equation} \label{eqn:final-comm-square}
\vcenter{\xymatrix{
\fM^{\D_{n+1}}(\cdot,\cdot)\ar[r]^\sim\ar[d]_\pi & \tcS^{\mathrm{C}_n/\D_n}_{(2n-k,k)}\ar[d]^\mu\\
\fM_1^{\D_{n+1}}(\cdot,\cdot)\ar[r]^\sim & \cS^{\mathrm{C}_n/\D_n}_{(2n-k,k)}
}}
\end{equation}
In fact, the bottom isomorphism of~\eqref{eqn:final-comm-square} must equal the composition of the appropriate map $\Psi_{\tbv,1}$ (as defined in Proposition~\ref{prop:referee}) with the isomorphism of Theorem~\ref{thm:downstairs}, since that composition would also make the diagram commute (as one sees by composing the commutative diagrams of Proposition~\ref{prop:referee} and Proposition~\ref{prop:maffei1}). So here we have a case where the morphism $\Psi_1$ defined after Proposition~\ref{prop:referee} is definitely surjective.

All that remains is to describe what group action on the various quiver varieties $\fM^{\D_{n+1}}(\cdot,\cdot)$ corresponds to the action of the stabilizer on $\tcS^{\mathrm{C}_n}_{(2n-k,k)}$ or $\tcS^{\D_n}_{(2n-k,k)}$.

If~\eqref{eqn:skew-symm} holds, the stabilizer acting faithfully on $\tcS^{\mathrm{C}_n}_{(2n-k,k)}$ is $Z_{PSp_{2n}}(E_0,H_0,F_0)$, which is $Z_{G_{(\cdot,\cdot)}}(E_0,H_0,F_0)/\C^\times$. By Theorem~\ref{thm:upstairs}, the corresponding action on $\fM^{\D_{n+1}}(\cdot,\cdot)$ is that of $G_W^{\theta\sim}/\C^\times$. Thus, the identity component of the stabilizer acts as $G_W^\theta/\C^\times\cong(GL_{w_1}\times\cdots\times GL_{w_{n-1}}\times GL_{w_+}\times GL_{w_-})/\C^\times$ (which is either trivial, isomorphic to $\C^\times$, or $PGL_2$ in the three cases). When $k$ is even there is a non-identity component whose elements have order $2$; by Proposition~\ref{prop:conn-compts}(2) these elements act as diagram automorphisms for the exchange of the $+$ and $-$ nodes.

If~\eqref{eqn:symm} holds, the relevant stabilizer is $Z_{PSO_{2n}}(E_0,H_0,F_0)$, which in each case is the identity component of $Z_{G_{(\cdot,\cdot)}}(E_0,H_0,F_0)/\C^\times$, and the corresponding action on $\fM^{\D_{n+1}}(\cdot,\cdot)$ is that of $(GL_{w_1}\times\cdots\times GL_{w_{n-1}}\times GL_{w_+}\times GL_{w_-})/\C^\times$. When $k$ is odd there is a non-identity component of $G_W^{\theta\sim}/\C^\times$ corresponding to the non-identity component of $Z_{PO_{2n}}(E_0,H_0,F_0)$; this acts on $\fM^{\A_{2n-1}}(V,W)^\theta$ by interchanging the two connected components (i.e.\ the two copies of $\tcS^{\D_n}_{(2n-k,k)}$), in accordance with Proposition~\ref{prop:conn-compts}(1).
\end{proof}


\end{document}